\titleformat*{\subsection}{\Large\bfseries}
\titleformat*{\subsubsection}{\large\bfseries}
\titleformat*{\paragraph}{\large\bfseries}
\titleformat*{\subparagraph}{\large\bfseries}
\renewcommand{\@seccntformat}[1]{\csname the#1\endcsname.}
\renewenvironment{abstract}{%
    \if@twocolumn
      \section*{\abstractname}%
    \else %% <- here I've removed \small
      \begin{center}%
        {\bfseries \Large\abstractname\vspace{\z@}}%  %% <- here I've added \Large
      \end{center}%
      \quotation
    \fi}
    {\if@twocolumn\else\endquotation\fi}
\theoremstyle{plain}
\newtheorem{thm}{Theorem}[subsection]
\theoremstyle{definition}
\newtheorem{defn}[thm]{Definition}
\newtheorem{rem}[thm]{Remark}
\newtheorem{cor}[thm]{Corollary}
\newtheorem{prop}[thm]{Proposition}
\newtheorem{example}[thm]{Example}
\newtheorem{fact}[thm]{Fact}
\providecommand{\keywords}[1]{{\bf{Keywords:}} #1}
\providecommand{\subjectclass}[2]{\textbf{Mathematics subject classification 2020:} #1}
\title{{\bf Some new symmetric structures in Ramsey theory}}
\author {Aninda Chakraborty and Sayan Goswami\footnote{Corresponding author}}
\newcommand{\Addresses}{{
  \bigskip
  \footnotesize

 Aninda Chakraborty, \textsc{Government General Degree College at Chapra, Chapra -741123, Nadia, West Bengal, India.}\par\nopagebreak
  \textit{E-mail address}: \textcolor{blue}{anindachakraborty2@gmail.com}

  \medskip

Sayan Goswami, \textsc{Department of Mathematics, 
          University of Kalyani, 
          Kalyani-741235,
          Nadia, West Bengal, India}\par\nopagebreak
  \textit{E-mail address}: \textcolor{blue}{sayan92m@gmail.com}

}}
\begin{document}

\maketitle

\begin{abstract}
\noindent In this article, we will investigate several new configurations
in Ramsey Theory, using the $\varoast_{l,k}$-operation on the set
of integers, recently introduced in \cite{key-4}. This operation
is useful to study symmetric structures in the set of integers, such
as monochromatic configurations of the form $\left\{ x,y,x+y+xy\right\} $
as one of its simplest case. In \cite{key-4}, the author has studied
more general symmetric structures. It has been shown that the Hindman's
Theorem, van der Waerden's Theorem, Deuber's Theorem have their own
symmetric versions. In this article we will explore several new structures,
including polynomial versions of these symmetric structures and some
of its variants. As a result, we get several new symmetric polynomial
configurations as well as new linear symmetric patterns. In the final
section, we will also introduce two new operations on the set of non-negative
integers $\mathbb{N}$, to obtain further new configurations.
\end{abstract}
\subjectclass{05D10}\\

\noindent \keywords{Symmetric configurations, Ramsey theory, Algebra of the Stone-\v{C}ech compactification}

\begingroup % start a TeX group

\color{blue}% or whatever color you wish to use
\tableofcontents
\endgroup
%\addtocontents{toc}{\protect\setcounter{tocdepth}{1}}
\vspace{.3in}
\section{Introduction}

Ramsey Theory mainly deals with, ``If a set $A$ has a property,
then how large its subsets need to be to preserve that property?''
One way to do that is to check whether the property holds if the set
$A$ is finitely partitioned or colored. A finite coloring of a set
$A$ is a function from $A$ to a finite set $\left\{ 1,2,\ldots,n\right\} $.
A subset $B$ of $A$ is monochromatic if the function is constant
on $B$. Note that, for a set $X$, $\mathcal{P}\left(X\right)=\left\{ A:A\subseteq X\right\} $
and $\mathcal{P}_{f}\left(X\right)=\left\{ F\subseteq X:F\neq\emptyset\text{ and }F\text{ is finite}\right\} $.

One of the first and basic result of Ramsey Theory is due to van der
Waerden. It proves the partition regularity of arithmetic progression
in the set of natural numbers.
\begin{thm}
\textup{\cite[van der Waerden's Theorem (1927) ]{key-8}} For every
$l\in\mathbb{N}$, and for every finite coloring $\mathbb{N}=C_{1}\cup C_{2}\ldots\cup C_{r}$,
there exists a monochromatic arithmetic progression of length $l$;
that is, there exist a color $C_{i}$ and some elements $a,b$ such
that $a,a+b,a+2b,...,a+lb\in C_{i}$.
\end{thm}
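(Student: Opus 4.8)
The plan is first to reduce the infinite statement to its finite quantitative form: for every length $l$ and every number of colours $r$ there exists a least integer $W(l,r)$ such that every $r$-colouring of $\{1,2,\dots,N\}$ with $N\ge W(l,r)$ contains a monochromatic arithmetic progression of length $l$. Granting this, the theorem follows at once, since any finite colouring of $\mathbb{N}$ restricts to a colouring of the initial segment $\{1,\dots,W(l,r)\}$. I would prove the existence of $W(l,r)$ by induction on $l$, the case $l=2$ being immediate from the pigeonhole principle, because among any $r+1$ integers two share a colour and any two integers already form a progression of length $2$.

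For the inductive step I would use the method of \emph{colour focusing}. Say that arithmetic progressions $P_{1},\dots,P_{k}$, each of length $l-1$, are \emph{focused} at a point $f$ if extending each $P_{i}$ by one further step of its own common difference lands exactly on $f$, and call them \emph{colour-focused} if in addition each $P_{i}$ is monochromatic and the $k$ progressions carry $k$ pairwise distinct colours. The heart of the argument is an auxiliary claim, proved by a second induction on $k$ for $1\le k\le r$: every $r$-colouring of a sufficiently long interval contains either a monochromatic progression of length $l$ or a colour-focused family of size $k$. The base case $k=1$ is precisely the outer inductive hypothesis at length $l-1$. For the step from $k$ to $k+1$ I would cut a long interval into consecutive equal blocks, colour each block by its entire internal colouring pattern (an alphabet of finite size), and feed this block-colouring back into the already established length-$(l-1)$ result so as to align a short arithmetic progression of identically coloured blocks; combining a size-$k$ family found inside one such block with its translates along these aligned blocks then either enlarges the family by one progression of a new colour or produces a monochromatic progression of length $l$ outright.

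The claim closes the induction on $l$: as soon as the colour-focused family reaches size $r$ it uses every available colour, so whatever colour the common focus $f$ receives must coincide with the colour of one of the $r$ progressions, and appending $f$ to that progression yields the sought monochromatic progression of length $l$. Hence the family can never actually attain size $r$ without a length-$l$ progression having been produced first, and this bounds $W(l,r)$.

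The main obstacle is the combinatorial bookkeeping in the step from $k$ to $k+1$: one must simultaneously control the starting points, the common differences, and the colours of all the progressions while guaranteeing that the nested intervals stay long enough for the aligned identically coloured blocks to exist, so that the combination genuinely keeps the enlarged family focused and the dichotomy ``new colour versus finished progression'' is forced. This is exactly what drives the rapidly growing (Ackermann-type) bounds on $W(l,r)$. I would add that, in the spirit of the present paper, one can instead argue inside the Stone-\v{C}ech compactification $\beta\mathbb{N}$: choosing a minimal idempotent $p\in(\beta\mathbb{N},+)$, the unique colour class belonging to $p$ is a central set, and the Central Sets Theorem guarantees that every central set contains arithmetic progressions of every finite length; on this route the difficulty migrates from the focusing bookkeeping into establishing the existence of minimal idempotents and the Central Sets Theorem.
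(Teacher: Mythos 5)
The paper does not prove this statement at all: van der Waerden's Theorem is quoted as classical background with a citation to the original 1927 paper, so there is no in-paper argument to match yours against. Your proposal is the standard double-induction colour-focusing proof, and its skeleton is sound: the reduction to the finite form $W(l,r)$, the outer induction on $l$, the inner induction on $k\le r$ producing either a monochromatic progression of length $l$ or $k$ colour-focused progressions of length $l-1$ with distinct colours, the product-colouring of blocks fed back into the length-$(l-1)$ case, and the endgame at $k=r$ where the focus must repeat a colour. This is exactly the argument of van der Waerden (as presented, e.g., in Graham--Rothschild--Spencer), and the point you flag as the main obstacle --- the bookkeeping in the $k\to k+1$ step --- is indeed where all the work lies; as written it is a sketch rather than a complete proof, since you do not verify that the translated copies of the size-$k$ family together with the progression of block-starting-points remain focused at a common point, but the claim is true and routine. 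One cosmetic mismatch: the paper's convention lists $a,a+b,\dots,a+lb$, i.e.\ $l+1$ terms for ``length $l$,'' while your base case treats length $2$ as two terms; this shifts indices but does not affect the induction. Your closing remark that one could instead pass through $\beta\mathbb{N}$ and the Central Sets Theorem is consistent with the paper's own framing (it explicitly notes that the Central Sets Theorem extends van der Waerden), and buys generality at the cost of the full idempotent machinery, whereas the focusing argument is elementary and yields explicit (if Ackermann-type) bounds.
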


A polynomial version of the above theorem has given by V. Bergelson
and A. Leibman in \cite{key-2}, known as Polynomial van der Waerden's
Theorem.
\begin{thm}
\cite{key-2}\textup{ }Suppose that $p_{1},p_{2},\ldots,p_{m}$ are
polynomials with integer coefficients and no constant term. Then whenever
$\mathbb{N}$ is finitely colored there exist natural numbers $a$
and $d$ such that the point $a$ and all the points $a+p_{i}\left(d\right)$,
for $1\leq i\leq m$, have the same color.
\end{thm}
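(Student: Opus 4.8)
The plan is to convert the partition statement into a statement about simultaneous (multiple) recurrence in a topological dynamical system, and then to establish that recurrence by an induction on the combinatorial complexity of the family $\{p_1,\ldots,p_m\}$. First I would fix a finite coloring $\mathbb{N}=C_1\cup\cdots\cup C_r$ and encode it as a point $\xi\in\{1,\ldots,r\}^{\mathbb{N}}$, letting $X=\overline{\{S^n\xi:n\geq 0\}}$ be the orbit closure of $\xi$ under the shift $S$; this is a compact metric space, and after passing to a minimal subsystem I may assume $(X,T)$ is minimal with $T$ a homeomorphism. The desired conclusion, that $a$ and all the $a+p_i(d)$ share a color, then translates into finding a point $x\in X$ and an integer $d$ whose iterates $x,T^{p_1(d)}x,\ldots,T^{p_m(d)}x$ all lie in one cylinder set, i.e. are pairwise $\varepsilon$-close for an $\varepsilon$ small enough to separate the colors.

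The heart of the matter is therefore a Polynomial Topological Recurrence Theorem: for any homeomorphism $T$ of a compact metric space $X$, any $p_1,\ldots,p_m\in\mathbb{Z}[x]$ with $p_i(0)=0$, and any $\varepsilon>0$, there exist $x\in X$ and $d\in\mathbb{N}$ with $d(T^{p_i(d)}x,x)<\varepsilon$ for every $i$. I would prove this by Bergelson's PET (polynomial exhaustion technique) induction. One assigns to each finite family of polynomials a weight, recording degrees together with the multiplicities of leading behavior; using the differencing operation $p_i(n)\mapsto p_i(n+h)-p_j(h)$ together with an auxiliary variable $h$, one replaces the given family by an associated family of strictly smaller weight, and a compactness/diagonalization argument propagates recurrence for the simpler family back up to the original one. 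The base case, in which all surviving polynomials are linear, is precisely the topological (Furstenberg--Weiss) form of the classical van der Waerden theorem quoted above.

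The step I expect to be the genuine obstacle is arranging the PET induction so that the weight strictly decreases at each differencing step: after passing from $\{p_i(n)\}$ to the pairs $\{p_i(n+h)-p_j(h)\}$, new ``cross'' polynomials appear, and one must verify by careful bookkeeping of the weight vector that every term of maximal weight is annihilated and that the resulting family is honestly simpler, so that the induction is well founded. A secondary difficulty is making the recurrent point $x$ and the return time $d$ uniform across all colors simultaneously; this is resolved by working with all cylinders at once and extracting, via compactness of $X$, a single pair $(x,d)$ valid for the whole coloring, after which translating $x$ back to the sequence $\xi$ produces the sought-after $a$ and $d$ in $\mathbb{N}$.

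Alternatively, one can bypass dynamics and run Walters' purely combinatorial argument, proving a polynomial Hales--Jewett theorem by a double induction and then specializing, or one can phrase the same recurrence algebraically inside $(\beta\mathbb{N},+)$ through a minimal idempotent and the centrality of its members, which is closer in spirit to the ultrafilter methods used elsewhere in this paper. In each route the combinatorial core is the same PET-type induction, and that is where the real work concentrates.
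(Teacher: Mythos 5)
The paper does not prove this theorem: it is quoted verbatim from Bergelson--Leibman \cite{key-2} as background, so there is no in-paper argument to compare yours against. What you have written is a faithful outline of the original Bergelson--Leibman route (and of the Furstenberg--Weiss scheme it extends): encode the coloring as a point in a symbolic system, reduce to polynomial topological multiple recurrence, and establish that recurrence by PET induction with topological van der Waerden as the base case. The reduction steps you describe are sound, modulo two small repairs you should make explicit: the one-sided shift on $\left\{1,\ldots,r\right\}^{\mathbb{N}}$ is not a homeomorphism, so you must pass to $\left\{1,\ldots,r\right\}^{\mathbb{Z}}$ (extending the coloring arbitrarily) or to the natural extension before speaking of $T^{p_i(d)}$ with possibly negative exponents; and you need a word on why $a$ and the $a+p_i(d)$ can be taken in $\mathbb{N}$.

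The substantive issue is that the proof's entire content lives in the two steps you explicitly defer. You never define the weight (the vector recording, for each degree, the number of distinct leading-coefficient classes), so the claim that the differenced family $\left\{p_i(n+h)-p_j(h)\right\}$ has strictly smaller weight is an assertion, not an argument; and the ``compactness/diagonalization argument'' that propagates recurrence back up is in fact the delicate color-focusing construction --- building points $x_0,x_1,\ldots$ and times $n_1,n_2,\ldots$ with $T^{p_i(n_{j+1}+\cdots+n_k)}x_k$ close to $x_j$ for all $i$ and all $j<k$, then applying pigeonhole to find two $x_j$'s that are close --- which is where uniformity over all $i$ simultaneously is actually earned. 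As it stands this is a correct proof plan identifying the right difficulties, not a proof; to make it one you must either execute the PET bookkeeping and the focusing lemma, or candidly cite them (e.g.\ to \cite{key-2} or to the combinatorial route through the polynomial Hales--Jewett theorem \cite{key-9}, which is the machinery this paper actually uses elsewhere).
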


\subsection{ A brief review of topological algebra}

Now, let us recall some preliminaries of the algebra of the Stone-\v{C}ech
compactification of a discrete semigroup. Let $\left(S,\cdot\right)$
be any discrete semigroup and denote its Stone-\v{C}ech compactification
as $\beta S$. $\beta S$ is the set of all ultrafilters on $S$,
where the points of $S$ are identified with the principal ultrafilters.
The basis for the topology is $\left\{ \bar{A}:A\subseteq S\right\} $,
where $\bar{A}=\left\{ p\in\beta S:A\in p\right\} $. The operation
of $S$ can be extended to $\beta S$ making $\left(\beta S,\cdot\right)$
a compact, right topological semigroup with $S$ contained in its
topological center. That is, for all $p\in\beta S$, the function
$\rho_{p}:\beta S\rightarrow\beta S$ is continuous, where $\rho_{p}\left(q\right)=q\cdot p$
and for all $x\in S$, the function $\lambda_{x}:\beta S\rightarrow\beta S$
is continuous, where $\lambda_{x}\left(q\right)=x\cdot q$. For $p,q\in\beta S$
and $A\subseteq S$, $A\in p\cdot q$ if and only if $\left\{ x\in S:x^{-1}A\in q\right\} \in p$,
where $x^{-1}A=\left\{ y\in S:x\cdot y\in A\right\} $.
\begin{defn}
\cite{key-11} Let $\left(S,\cdot\right)$ be a discrete semigroup.
\end{defn}

\begin{enumerate}
\item The set $A$ is thick if and only if for any finite subset $F$ of
$S$, there exists an element $x\in S$ such that $F\cdot x\subset A$.
This means the sets which contains a translation of any finite subset.
For example, one can see $\cup_{n\in\mathbb{N}}\left[2^{n},2^{n}+n\right]$
is a thick set in $\mathbb{N}$.
\item The set $A$ is syndetic if and only if there exists a finite subset
$G$ of $S$ such that $\bigcup_{t\in G}t^{-1}A=S$. That is, with
a finite translation if, the set which covers the entire semigroup,
then it will be called a Syndetic set. For example, the set of even
and odd numbers are both syndetic in $\mathbb{N}$.
\item The sets which can be written as an intersection of a syndetic and
a thick set are called piecewise syndetic sets. More formally a set
$A$ is piecewise syndetic if and only if there exists $G\in\mathcal{P}_{f}\left(S\right)$
such that for every $F\in\mathcal{P}_{f}\left(S\right)$, there exists
$x\in S$ such that $F\cdot x\subseteq\bigcup_{t\in G}t^{-1}A$. Clearly
the thick sets and syndetic sets are natural examples of piecewise
syndetic sets. From definition one can immediately see that $2\mathbb{N}\cap\bigcup_{n\in\mathbb{N}}\left[2^{n},2^{n}+n\right]$
is a nontrivial example of piecewise syndetic sets in $\mathbb{N}$.
\end{enumerate}
Since $\beta S$ is a compact Hausdorff right topological semigroup,
it has a smallest two sided ideal denoted $K\left(\beta S\right)$,
which is the union of all of the minimal right ideals of $S$, as
well as the union of all of the minimal left ideals of $S$. Every
left ideal of $\beta S$ contains a minimal left ideal and every right
ideal of $\beta S$ contains a minimal right ideal. It can be shown
that a set $A$ is piecewise syndetic if and only if it is a member
of an element of a minimal ultrafilter. The intersection of any minimal
left ideal and any minimal right ideal is a group and any two such
groups are isomorphic. Any idempotent $p$ in $\beta S$ is said to
be minimal if and only if $p\in K\left(\beta S\right)$. A subset
$A$ of $S$ is then central if and only if there is some minimal
idempotent $p$ such that $A\in p$. For more details, the reader
can see \cite{key-11}. In \cite{key-5}, H. Furstenberg defined \textit{central
subsets} of $\mathbb{N}$ in terms of notions from topological dynamics,
showed that if $\mathbb{N}$ is divided into finitely many classes,
then one of these classes must be central, and proved the Central
Sets Theorem.
\begin{thm}
\textup{\cite[The Central Sets Theorem]{key-5} }Let $C$ be a central
subset of $\mathbb{N}$, let $k\in\mathbb{N}$, and for each $i\in\left\{ 1,2,\ldots,k\right\} $,
let $\left\langle y_{i,n}\right\rangle _{n=1}^{\infty}$ be a sequence
in $\mathbb{Z}$. There exist sequences $\left\langle a_{n}\right\rangle _{n=1}^{\infty}$
in $\mathbb{N}$ and $\left\langle H_{n}\right\rangle _{n=1}^{\infty}$
in $\mathcal{P}_{f}\left(\mathbb{N}\right)$ such that 
\begin{enumerate}
\item for each $n$, $\max H_{n}<\min H_{n+1}$ and 
\item for each $i\in\left\{ 1,2,\ldots,k\right\} $ and each $F\in\mathcal{P}_{f}\left(\mathbb{N}\right)$,
\[
\sum_{n\in F}\left(a_{n}+\sum_{t\in H_{n}}y_{i,t}\right)\in C.
\]
\end{enumerate}
\end{thm}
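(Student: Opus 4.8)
The plan is to argue entirely inside the algebra of $\beta\mathbb{N}$ for the semigroup $(\mathbb{N},+)$. Since $C$ is central, by definition there is a minimal idempotent $p\in\beta\mathbb{N}$ with $C\in p$. The engine of the whole proof is the standard idempotent lemma: writing $-x+C=\{y:x+y\in C\}$ and setting $C^{\star}=\{x\in C:-x+C\in p\}$, one has $C^{\star}\in p$, and, crucially, for every $x\in C^{\star}$ one also has $-x+C^{\star}\in p$. Because $C^{\star}\subseteq C$, it suffices to build the two sequences so that \emph{every} finite subset sum lands in $C^{\star}$; the self-referential property of $C^{\star}$ is precisely what will keep the recursion alive.

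First I would run an induction on $n$ whose invariant is that $a_{1},\dots,a_{n}\in\mathbb{N}$ and $H_{1},\dots,H_{n}\in\mathcal{P}_{f}(\mathbb{N})$ have been chosen with $\max H_{m}<\min H_{m+1}$ and with $\sum_{m\in F}\left(a_{m}+\sum_{t\in H_{m}}y_{i,t}\right)\in C^{\star}$ for every $i\in\{1,\dots,k\}$ and every nonempty $F\subseteq\{1,\dots,n\}$. To pass from $n$ to $n+1$, abbreviate $s_{i,G}=\sum_{m\in G}\left(a_{m}+\sum_{t\in H_{m}}y_{i,t}\right)$ for each $i$ and each nonempty $G\subseteq\{1,\dots,n\}$; by hypothesis $s_{i,G}\in C^{\star}$, so $-s_{i,G}+C^{\star}\in p$, and hence the finite intersection $B=C^{\star}\cap\bigcap_{i=1}^{k}\bigcap_{\emptyset\neq G\subseteq\{1,\dots,n\}}\left(-s_{i,G}+C^{\star}\right)$ again lies in $p$. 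If I can locate a single $a_{n+1}\in\mathbb{N}$ and a single $H_{n+1}$ with $\min H_{n+1}>\max H_{n}$ such that $b_{i}:=a_{n+1}+\sum_{t\in H_{n+1}}y_{i,t}\in B$ for all $i$, then any $F\ni n+1$ splits as $G\cup\{n+1\}$, giving $s_{i,G}+b_{i}\in C^{\star}$ when $G\neq\emptyset$ and $b_{i}\in C^{\star}$ when $G=\emptyset$, while the subsets $F$ avoiding $n+1$ are handled by the inductive hypothesis. This restores the invariant, and reading off $C^{\star}\subseteq C$ at the end yields the theorem.

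The entire difficulty is thus compressed into one selection step: given $B\in p$ and $N\in\mathbb{N}$, produce $a\in\mathbb{N}$ and a finite $H$ with $\min H>N$ so that $a+\sum_{t\in H}y_{i,t}\in B$ holds \emph{simultaneously} for all $i$. I expect this to be the main obstacle, and it is genuinely delicate: the increments $y_{i,t}$ are prescribed, not chosen, so one cannot simply imitate the Hindman-style telescoping used for the Finite Sums Theorem, and a translate $-w+B$ of a member of an idempotent need not itself be a member. The only source of flexibility is the free shift $a$ together with the largeness of a minimal idempotent. The route I would take is to pass to the product group $(\mathbb{Z}^{k},+)$: with $\vec y_{t}=(y_{1,t},\dots,y_{k,t})$ and $\vec 1=(1,\dots,1)$, the requirement becomes $a\vec 1+\sum_{t\in H}\vec y_{t}\in B^{k}$, where $B^{k}=B\times\cdots\times B$. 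Pushing $p$ forward along the diagonal embedding $a\mapsto a\vec 1$ gives an idempotent $\mathbf p$ of $\beta(\mathbb{Z}^{k})$ with $B^{k}\in\mathbf p$, and the plan is to use the idempotency of $\mathbf p$ to absorb the prescribed vector displacement $\sum_{t\in H}\vec y_{t}$ while the diagonal direction $\vec 1$ supplies the missing freedom.

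The hard part of the write-up, then, is making this product step rigorous: I would have to verify that $B^{k}$ really belongs to a suitably \emph{central} (not merely idempotent) ultrafilter of $\beta(\mathbb{Z}^{k})$ projecting back to $p$ along each coordinate, since minimality need not survive a homomorphic push-forward automatically, and then show that the diagonal shift can meet $B^{k}$ after the prescribed displacement $\sum_{t\in H}\vec y_{t}$ with $\min H$ as large as desired. Establishing exactly this largeness in the product is the crux; by contrast, the outer induction, the intersection bookkeeping of the $s_{i,G}$, and the final passage from $C^{\star}$ to $C$ are all routine once the selection step is secured.
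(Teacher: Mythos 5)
The paper does not actually prove this statement --- it is quoted as background from Furstenberg --- so your argument has to stand on its own. Your outer induction is the standard one and is correct as far as it goes: the $C^{\star}$ device, the fact that $x\in C^{\star}$ implies $-x+C^{\star}\in p$, the intersection $B$ of the translates $-s_{i,G}+C^{\star}$, the splitting of $F$ as $G\cup\{n+1\}$, and the final passage from $C^{\star}$ to $C$ are all exactly right.

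The genuine gap is that the selection step --- the part you yourself call the crux --- is never carried out, and the sketch you give would not close as stated. Two concrete problems. First, you place the pushed-forward idempotent in $\beta(\mathbb{Z}^{k})$, but the argument that works lives in the product $(\beta\mathbb{Z})^{k}$ of compactifications, a proper quotient of $\beta(\mathbb{Z}^{k})$: the reason minimality survives there is the structural identity $K\bigl(\times_{i=1}^{k}\beta\mathbb{Z}\bigr)=\times_{i=1}^{k}K(\beta\mathbb{Z})$, which puts $(p,\ldots,p)$ in $K\bigl((\beta\mathbb{Z})^{k}\bigr)$ outright; in $\beta(\mathbb{Z}^{k})$ the corresponding fact is simply not available, so your worry about minimality is well founded but is resolved by changing the ambient space, not by checking centrality of a push-forward. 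Second, ``using the idempotency of $\mathbf{p}$ to absorb the displacement'' is not the mechanism: idempotency alone cannot force $\mathbf{p}$ into the closure of $I_{N}=\bigl\{\bigl(a+\sum_{t\in H}y_{1,t},\ldots,a+\sum_{t\in H}y_{k,t}\bigr):a\in\mathbb{N},\ \min H>N\bigr\}$. What is needed is the ideal computation: with $\Delta$ the diagonal copy of $\mathbb{N}$, one checks that $E=\overline{\Delta}\cup\overline{I_{N}}$ is a subsemigroup of $(\beta\mathbb{Z})^{k}$ and that $\overline{I_{N}}$ is a two-sided ideal of $E$, whence $K(E)\subseteq\overline{I_{N}}$; since $(p,\ldots,p)\in K\bigl((\beta\mathbb{Z})^{k}\bigr)\cap E\subseteq K(E)$, every member of $(p,\ldots,p)$ --- in particular $B\times\cdots\times B$ --- meets $I_{N}$, which is exactly the simultaneous choice of $a$ and $H$ you need. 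This is the step where the minimality of $p$, as opposed to mere idempotency, is actually consumed, and without it the proof is incomplete.
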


\noindent The Central Sets Theorem can be seen as a joint extension
of both the van der Waerden's Theorem and Hindman's Theorem. It has
an alternative simple algebraic characterization which is in \cite{key-2}.
\begin{defn}
($IP$-sets) Let $\left\langle x_{n}\right\rangle _{n\in\mathbb{N}}$
be an injective sequence in $\mathbb{Z}$. For each $\alpha\in\mathcal{P}_{f}\left(\mathbb{N}\right)$
define $x_{\alpha}=\sum_{n\in\alpha}x_{n}$.

1. The $IP$-set generated by $\left\langle x_{n}\right\rangle _{n\in\mathbb{N}}$
is the set 
\[
\text{FS}\left(\left\langle x_{n}\right\rangle _{n\in\mathbb{N}}\right)=\left\{ x_{\alpha}:\alpha\in\mathcal{P}_{f}\left(\mathbb{N}\right)\right\} .
\]
Clearly, for disjoint $\alpha,\beta\in\mathcal{P}_{f}\left(\mathbb{N}\right)$,
$x_{\alpha\cup\beta}=x_{\alpha}+x_{\beta}$.

2. Let $\left\langle x_{\alpha}\right\rangle _{\alpha\in\mathcal{P}_{f}\left(\mathbb{N}\right)}$,
$\left\langle y_{\alpha}\right\rangle _{\alpha\in\mathcal{P}_{f}\left(\mathbb{N}\right)}$
be $IP$-sets in $\mathbb{Z}$. Now for $\alpha,\beta\in\mathcal{P}_{f}\left(\mathbb{N}\right)$,
$\alpha<\beta$ if and only if $\max_{i\in\alpha}i<\min_{j\in\beta}j$.

3. $\left\langle x_{\alpha}\right\rangle _{\alpha\in\mathcal{P}_{f}\left(\mathbb{N}\right)}$
is called sub-$IP$ set of $\left\langle y_{\alpha}\right\rangle _{\alpha\in\mathcal{P}_{f}\left(\mathbb{N}\right)}$
if there exist $\alpha_{1}<\alpha_{2}<\cdots$ in $\mathcal{P}_{f}\left(\mathbb{N}\right)$
such that $x_{n}=y_{\alpha_{n}}$ for all $n\in\mathbb{N}$.
\end{defn}

\subsection{ Polynomial extension of Deuber's Theorem}

\noindent As a result of his studies about partition regularity,
Deuber \cite{key-2.1} demonstrated further generalizations. In particular,
he showed the partition regularity of the so-called $\left(m,p,c\right)$-sets.
\begin{defn}
\label{m,p,c}Let $m,p,c\in\mathbb{N}$ and let $s=\left(s_{0},\ldots,s_{m}\right)\in\left(\mathbb{Z}\setminus\left\{ 0\right\} \right)^{m+1}$.
The $\left(m,p,c\right)$-set generated by $s$ is the set 
\[
D\left(m,p,c,s\right)=\left\{ \begin{array}{cc}
cs_{0}\\
is_{0}+cs_{1} & i\in\left[-p,p\right]\\
is_{0}+js_{1}+cs_{2} & i,j\in\left[-p,p\right]\\
\vdots & \vdots\\
i_{0}s_{0}+\cdots+i_{m-1}s_{m-1}+cs_{m} & i_{m-1},\ldots,i_{0}\in\left[-p,p\right]
\end{array}\right\} .
\]
\end{defn}

\noindent The following theorem summarizes Deuber\textquoteright s
results from \cite{key-2.1}.
\begin{thm}
For any  $m,p,c\in\mathbb{Z}$ and any finite partition $\mathbb{Z}=\bigcup_{i=1}^{r}C_{i}$,
one of the $C_{i}$ contains an $\left(m,p,c\right)$-set for some
$s\in\left(\mathbb{Z}\setminus\left\{ 0\right\} \right)^{m+1}$.
\end{thm}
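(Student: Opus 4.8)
The plan is to prove the statement through the algebra of $\beta\mathbb{Z}$ together with the Central Sets Theorem, rather than by Deuber's original combinatorial double induction. First I would replace the partition statement by a statement about a single cell. Fix once and for all a minimal idempotent $p\in K(\beta\mathbb{Z})$. For any finite partition $\mathbb{Z}=\bigcup_{i=1}^{r}C_{i}$ there is exactly one $i$ with $C_{i}\in p$, and this $C_{i}$ is central by definition. Hence it suffices to prove the stronger, colour-free assertion: every central subset $C$ of $\mathbb{Z}$ contains a $(m,p,c)$-set for all $m,p,c\in\mathbb{N}$. To avoid a clash with the idempotent $p$, I will write $q$ for the parameter $p$ of the $(m,q,c)$-set throughout.

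This assertion I would prove by induction on $m$, peeling off the innermost generator $s_{0}$. The combinatorial observation driving the induction is the recursive decomposition
\[
D(m,q,c,(s_{0},\dots,s_{m}))=\{cs_{0}\}\cup\bigl\{\,i_{0}s_{0}+d:\ i_{0}\in[-q,q],\ d\in D(m-1,q,c,(s_{1},\dots,s_{m}))\,\bigr\}.
\]
For $m=0$ the required configuration is the single point $cs_{0}$: since the canonical homomorphism $\mathbb{Z}\to\mathbb{Z}/c\mathbb{Z}$ carries $p$ to the identity idempotent of the finite group $\mathbb{Z}/c\mathbb{Z}$, we have $c\mathbb{Z}\in p$, so $C\cap c\mathbb{Z}\in p$ is infinite and we may pick $s_{0}\neq 0$ with $cs_{0}\in C$. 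For the inductive step it is enough to produce $s_{0}\neq 0$ with $cs_{0}\in C$ together with a \emph{central} set $C_{1}\subseteq\bigcap_{i\in[-q,q]}(C-is_{0})$; then the inductive hypothesis yields a $(m-1,q,c)$-set $D'\subseteq C_{1}$, and for every $d\in D'$ and every $i_{0}\in[-q,q]$ we get $d+i_{0}s_{0}\in C$, so that $\{cs_{0}\}\cup\{\,d+i_{0}s_{0}:d\in D',\ i_{0}\in[-q,q]\,\}$ is the desired $(m,q,c)$-set inside $C$.

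The engine that produces $s_{0}$ and the refinement $C_{1}$ is the Central Sets Theorem. I would apply it to $C$ with the $2q+1$ constant sequences $y_{i,n}=i$, indexed by $i\in[-q,q]$, obtaining $\langle a_{n}\rangle$ in $\mathbb{N}$ and $\langle H_{n}\rangle$ in $\mathcal{P}_{f}(\mathbb{N})$ with $\max H_{n}<\min H_{n+1}$ such that, writing $A_{F}=\sum_{n\in F}a_{n}$ and $S_{F}=\sum_{n\in F}|H_{n}|$, one has $A_{F}+iS_{F}\in C$ for every $F\in\mathcal{P}_{f}(\mathbb{N})$ and every $i\in[-q,q]$. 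Thus for each finite block $F$ the theorem hands us a full symmetric arithmetic progression $\{A_{F}+iS_{F}:i\in[-q,q]\}\subseteq C$ with common difference $S_{F}$, and the increasing blocks $H_{n}$ endow these differences and centres with a nested sub-$IP$ structure; from this I would read off a single common difference to serve as $s_{0}$ and collect the admissible centres into the central set $C_{1}$, finally using $c\mathbb{Z}\in p$ to arrange the normalization $cs_{0}\in C$.

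The hard part is precisely this last synthesis: simultaneously forcing the \emph{additive} symmetric structure (all $2q+1$ translates $C-is_{0}$, $i\in[-q,q]$, surviving into a genuinely central $C_{1}$) and the \emph{multiplicative} normalization ($cs_{0}\in C$ with $s_{0}\neq 0$). The additive side is delicate because a minimal idempotent of $\beta\mathbb{Z}$ may be one-sided, so the negative half of the range $[-q,q]$ is invisible to naive translation arguments and is recovered only through the signed sequences fed into the Central Sets Theorem; keeping the refinement central rather than merely piecewise syndetic, while respecting the congruence condition modulo $c$, is where the bookkeeping must be organized most carefully, and it is the step I expect to require the most work.
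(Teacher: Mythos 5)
First, a point of reference: the paper does not prove this statement at all --- it is quoted as Deuber's theorem from \cite{key-2.1} --- so your proposal has to be judged on its own terms. Your reduction to central sets, the set identity $D(m,q,c,s)=\{cs_{0}\}\cup\{i_{0}s_{0}+d:\ i_{0}\in[-q,q],\ d\in D(m-1,q,c,(s_{1},\dots,s_{m}))\}$, and the base case via the fact that $c\mathbb{Z}$ lies in every idempotent are all correct.

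The genuine gap is the inductive step, and it is not mere bookkeeping: your induction needs the lemma that for every central $C$ there is a \emph{single} $s_{0}\neq0$ with $cs_{0}\in C$ such that $\bigcap_{i\in[-q,q]}(C-is_{0})$ is again central, and the tool you propose cannot deliver it. Applying the Central Sets Theorem with the constant sequences $y_{i,n}=i$ yields, for each block $F$, exactly one admissible centre $A_{F}$ for the difference $S_{F}$, with \emph{both} the centre and the difference varying with $F$; nothing in that output pins down one fixed difference together with a central (or even infinite) set of centres working for it, nor does it let you intersect the set of good differences with $\{d:cd\in C\}$, since those two sets would have to be members of a common minimal idempotent. (A naive algebraic attempt runs into the obstruction that a central set need not contain any configuration $\{d,2d,\dots,qd\}$; consider $3\mathbb{Z}+1$.) The standard Central-Sets-Theorem proof of Deuber's theorem avoids this by running the induction in the opposite order, peeling off the \emph{last} generator $s_{m}$ rather than the first: one inductively produces sequences $\langle s_{j,n}\rangle$, $j=0,\dots,m-1$, such that $D(m-1,q,c,\vec{s}_{H})\subseteq C$ for every finite union $H$ of blocks, then feeds the $(2q+1)^{m}$ sequences $y_{L,n}=L(s_{0,n},\dots,s_{m-1,n})$, one for each linear form $L$ with coefficients in $[-q,q]$, into the Central Sets Theorem; after forcing $a_{F}\in c\mathbb{Z}$ (possible because $c\mathbb{Z}$ is an IP$^{*}$ set), the output $a_{F}+\sum_{t\in H_{F}}y_{L,t}=cs_{m}+L(\vec{s}_{H_{F}})$ is precisely the missing block of rows. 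In that order the new generator enters every row with the single coefficient $c$, matching the shape of the CST conclusion, and no ``central set of admissible centres for a fixed difference'' lemma is required. As written, your argument is incomplete at exactly this crux.
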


The polynomial version of the above theorem has given by V. Bergelson,
J. H. Johnson Jr. and J. Moreira in \cite{key-1}. In the procedure,
they have first proved the Polynomial Central Sets Theorem. In order
to accomplish their results, they have introduced the following terms.
\begin{defn}
\cite{key-1}
\begin{enumerate}
\item \label{R-f}($R$-family) Let $G,H$ be countable commutative semigroups
and let $p\in\beta G$ be an ultrafilter. Let $\Gamma$ be a set of
functions from $H\rightarrow G$. We say that $\Gamma$ is an $R$-family
($R$ stands for returns) with respect to $p$ if for every finite
set $F\subseteq\Gamma$, every $A\in p$ and every IP-set $\left\langle y_{\alpha}\right\rangle _{\alpha\in\mathcal{P}_{f}\left(\mathbb{N}\right)}$
in $H$, there exist $x\in G$ and $\alpha\in F$ such that $x+f\left(y_{\alpha}\right)\in A,\,\forall f\in F$.
\item \label{licit} (Licit) Let $G,H$ be countable commutative semigroups
and let $\Gamma$ be a set of functions from $H$ to $G$. We say
that $\Gamma$ is licit if for any $f\in\Gamma$ and any $z\in H$,
there exists a function $\phi_{z}\in\Gamma$ such that $f\left(y+z\right)=\phi_{z}\left(y\right)+f\left(z\right)$.
\item \label{IP-regular} ($IP$-regular) Let $G$ be a countable commutative
semigroup. An endomorphism $c\in G\rightarrow G$ is called $IP$-regular
if for every $IP$-set $\left\langle x_{\alpha}\right\rangle _{\alpha\in\mathcal{P}_{f}\left(\mathbb{N}\right)}$
in $G$ there exists an IP-set $\left\langle y_{\alpha}\right\rangle _{\alpha\in\mathcal{P}_{f}\left(\mathbb{N}\right)}$
such that $\left\langle c\left(y_{\alpha}\right)\right\rangle _{\alpha\in\mathcal{P}_{f}\left(\mathbb{N}\right)}$
is a sub-IP-set of $\left\langle x_{\alpha}\right\rangle _{\alpha\in\mathcal{P}_{f}\left(\mathbb{N}\right)}$.
\end{enumerate}
\end{defn}

\begin{thm}
\textup{\cite[Polynomial Central Sets Theorem]{key-1} }Let $G,H$
be countable commutative semigroups, let $p\in\beta G$ be an idempotent
ultrafilter, let $\Gamma$ be an $R$-family with respect to $p$
which is licit. Then for any finite set $F\subseteq\Gamma$, any $A\in p$
and any $IP$-set $\left\langle y_{\alpha}\right\rangle _{\alpha\in\mathcal{P}_{f}\left(\mathbb{N}\right)}$
in $H$, there exist a sub-$IP$-set $\left\langle z_{\beta}\right\rangle _{\beta\in\mathcal{P}_{f}\left(\mathbb{N}\right)}$
of $\left\langle y_{\alpha}\right\rangle _{\alpha\in\mathcal{P}_{f}\left(\mathbb{N}\right)}$
and an $IP$-set $\left\langle x_{\beta}\right\rangle _{\beta\in\mathcal{P}_{f}\left(\mathbb{N}\right)}$
in $G$ such that for all $f\in F$ and for all $\beta\in\mathcal{P}_{f}\left(\mathbb{N}\right)$,
\[
x_{\beta}+f\left(y_{\beta}\right)\in A.
\]
\end{thm}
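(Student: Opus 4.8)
The plan is to prove the statement by the standard combination of the idempotent $\star$-set technique with an induction driven by the two structural hypotheses: licitness, which plays the role of a cocycle relation, and the $R$-family property, which supplies the simultaneous returns at each stage. Throughout, for $A\in p$ write $A^{\star}=\{x\in A:-x+A\in p\}$. Since $p=p+p$ is idempotent, the standard lemma (see \cite{key-11}) gives $A^{\star}\in p$ and, moreover, $-x+A^{\star}\in p$ for every $x\in A^{\star}$. Passing from $A$ to $A^{\star}$ at the outset lets us arrange that every partial sum we produce lands not merely in $A$ but in $A^{\star}$, which is exactly what is needed to keep the recursion going.

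First I would build, by induction on $k$, elements $x_{1},x_{2},\ldots\in G$ together with blocks $\alpha_{1}<\alpha_{2}<\cdots$ in $\mathcal{P}_{f}\left(\mathbb{N}\right)$ so that, writing $z_{n}=y_{\alpha_{n}}$ and $z_{\beta}=\sum_{n\in\beta}z_{n}=y_{\bigcup_{n\in\beta}\alpha_{n}}$, the invariant
\[
\sum_{n\in\beta}x_{n}+f\left(z_{\beta}\right)\in A^{\star}\qquad\left(\forall\,f\in F,\ \forall\,\emptyset\neq\beta\subseteq\left\{ 1,\ldots,k\right\} \right)
\]
holds. The base case is immediate from the $R$-family property applied to $F$, to the set $A^{\star}\in p$, and to $\left\langle y_{\alpha}\right\rangle $: it yields $x_{1}$ and $\alpha_{1}$ with $x_{1}+f\left(y_{\alpha_{1}}\right)\in A^{\star}$ for all $f\in F$.

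For the inductive step the key move is to isolate the newest block. Any $\beta\subseteq\left\{ 1,\ldots,k+1\right\} $ containing $k+1$ splits as $\beta=\gamma\cup\left\{ k+1\right\} $ with $\gamma\subseteq\left\{ 1,\ldots,k\right\} $, and since the blocks are increasing, $z_{\beta}=z_{k+1}+z_{\gamma}=y_{\alpha_{k+1}}+y_{\alpha_{\gamma}}$. Here licitness enters: for each $f\in F$ and each nonempty $\gamma$ it furnishes a function $\phi_{z_{\gamma}}^{f}\in\Gamma$ with $f\left(y_{\alpha_{k+1}}+y_{\alpha_{\gamma}}\right)=\phi_{z_{\gamma}}^{f}\left(y_{\alpha_{k+1}}\right)+f\left(z_{\gamma}\right)$. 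Using commutativity of $G$ and the inductive hypothesis $w_{\gamma}^{f}:=\sum_{n\in\gamma}x_{n}+f\left(z_{\gamma}\right)\in A^{\star}$, we obtain
\[
\sum_{n\in\beta}x_{n}+f\left(z_{\beta}\right)=w_{\gamma}^{f}+\Bigl(x_{k+1}+\phi_{z_{\gamma}}^{f}\left(y_{\alpha_{k+1}}\right)\Bigr),
\]
so it suffices to choose $x_{k+1}$ and $\alpha_{k+1}$ forcing $x_{k+1}+\phi_{z_{\gamma}}^{f}\left(y_{\alpha_{k+1}}\right)\in-w_{\gamma}^{f}+A^{\star}$ for every such pair $\left(\gamma,f\right)$, together with $x_{k+1}+f\left(y_{\alpha_{k+1}}\right)\in A^{\star}$ for the case $\gamma=\emptyset$. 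Collecting these finitely many conditions, I would form the finite family $F'=F\cup\{\phi_{z_{\gamma}}^{f}\}\subseteq\Gamma$ and the single target set $A'=A^{\star}\cap\bigcap_{\gamma,f}\left(-w_{\gamma}^{f}+A^{\star}\right)$, which lies in $p$ as a finite intersection of members of $p$. Applying the $R$-family property to $F'$, to $A'$, and to the tail IP-set $\{y_{\alpha}:\min\alpha>\max\alpha_{k}\}$ produces a single $x_{k+1}$ and a single $\alpha_{k+1}>\alpha_{k}$ with $x_{k+1}+g\left(y_{\alpha_{k+1}}\right)\in A'$ for all $g\in F'$; since $A'\subseteq-w_{\gamma}^{f}+A^{\star}$ for each pair, the required membership follows, and as $w_{\gamma}^{f}\in A^{\star}$ the sum returns to $A^{\star}$, closing the induction.

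Finally, taking $\left\langle z_{\beta}\right\rangle $ to be the sub-$IP$-set of $\left\langle y_{\alpha}\right\rangle $ determined by the increasing blocks $\alpha_{n}$ and $\left\langle x_{\beta}\right\rangle =\left\langle \sum_{n\in\beta}x_{n}\right\rangle $ the $IP$-set they generate in $G$, the invariant says precisely that $x_{\beta}+f\left(z_{\beta}\right)\in A^{\star}\subseteq A$ for all $f\in F$ and all $\beta$, as required. I expect the main obstacle to be the bookkeeping in the inductive step: one must verify that the transformed functions $\phi_{z_{\gamma}}^{f}$ genuinely lie in $\Gamma$ (so that the $R$-family property may be invoked on the enlarged family $F'$) and that intersecting the finitely many shifted targets $-w_{\gamma}^{f}+A^{\star}$ keeps us inside $p$, while simultaneously pushing the new block past $\alpha_{k}$ by restricting to a tail IP-set. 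The interplay of licitness, which guarantees $\phi_{z_{\gamma}}^{f}\in\Gamma$, with the $A^{\star}$-lemma, which guarantees the shifted targets stay in $p$, is the crux; once these two points are secured, the remainder is the routine idempotent recursion.
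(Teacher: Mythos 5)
This theorem is quoted in the paper from \cite{key-1} (Bergelson--Johnson--Moreira) and the paper supplies no proof of its own, so there is nothing internal to compare against. Your argument is correct and is essentially the standard proof from the cited source: the idempotent $A^{\star}$-recursion, with licitness converting $f\left(y_{\alpha_{k+1}}+z_{\gamma}\right)$ into $\phi_{z_{\gamma}}^{f}\left(y_{\alpha_{k+1}}\right)+f\left(z_{\gamma}\right)$ so that the enlarged finite family $F'\subseteq\Gamma$ and the shifted target $A'\in p$ can be fed back into the $R$-family property on a tail IP-set. (Two cosmetic points: the conclusion should read $x_{\beta}+f\left(z_{\beta}\right)\in A$ --- the $y_{\beta}$ in the paper's statement is a typo you have silently and correctly repaired --- and if one insists on the paper's injectivity requirement for IP-sets one should note that the $x_{n}$ can be thinned to make $\left\langle x_{\beta}\right\rangle$ injective, or that the definition is harmlessly relaxed in $G$.)
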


The following definition is the polynomial version of Definition \ref{m,p,c}.
\begin{defn}
Given $m\in\mathbb{N}$, $c:G\rightarrow G$ is a homomorphism, $F$
is an $m$-tuple $F=\left(F_{1},\ldots,F_{m}\right)$ where each $F_{j}$
is a finite set of functions from $G^{j}$ to $G$ and $s=\left(s_{0},\ldots,s_{m}\right)\in\left(G\setminus\left\{ 0\right\} \right)^{m+1}$,
the $\left(m,F,c\right)$-set generated by $s$ is the set 
\[
D\left(m,F,c,s\right)=\left\{ \begin{array}{cc}
c\left(s_{0}\right)\\
f\left(s_{0}\right)+c\left(s_{1}\right) & f\in F_{1}\\
f\left(s_{0},s_{1}\right)+c\left(s_{2}\right) & f\in F_{2}\\
\vdots & \vdots\\
f\left(s_{0},\ldots,s_{m-1}\right)+c\left(s_{m}\right) & f\in F_{m}
\end{array}\right\} .
\]
\end{defn}

The following is the statement of Polynomial Deuber's Theorem.
\begin{thm}
\textup{\cite[Theorem 4.9]{key-1} }Let $G$ be a countable commutative
semigroup, let $p\in\beta G$ be an idempotent ultrafilter and let
$\Gamma_{1},\Gamma_{2},\ldots$ be $R$-families with respect to $p$
which are licit, where $\Gamma_{j}$ consists of maps from $G^{j}$
to $G$. Let $c:G\rightarrow G$ be $IP$-regular, let $m\in\mathbb{N}$
and, for each $j=1,\ldots,m$, let $F_{j}\subseteq\Gamma$ be finite.
Finally, put $F=\left(F_{1},\ldots,F_{m}\right)$. Then for any $A\in p$
there exists an IP-set $\left\langle s_{\alpha}\right\rangle _{\alpha\in\mathcal{P}_{f}\left(\mathbb{N}\right)}$
in $G^{m+1}$ such that $D\left(m,F,c,s_{\alpha}\right)\subseteq A$
for every $\alpha\in\mathcal{F}$.
\end{thm}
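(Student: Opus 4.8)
The plan is to induct on $m$, with the Polynomial Central Sets Theorem supplying each successive level of the configuration and the $IP$-regularity of $c$ converting the ``returns'' it produces into values of $c$. The partition-regular content will come entirely from the Polynomial Central Sets Theorem, while liciteness and the $R$-family hypotheses are precisely what license its repeated application.

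For the base of the induction one only needs an IP-set $\langle s_{0,\alpha}\rangle$ in $G$ with $c(s_{0,\alpha})\in A$ for every $\alpha$ (the single requirement $c(s_0)\in A$). Since $p$ is idempotent and $A\in p$, the set $A$ contains an $IP$-set $\langle t_\alpha\rangle$. Applying the $IP$-regularity of $c$ to $\langle t_\alpha\rangle$ produces an IP-set $\langle s_{0,\alpha}\rangle$ for which $\langle c(s_{0,\alpha})\rangle$ is a sub-$IP$-set of $\langle t_\alpha\rangle$, so that $c(s_{0,\alpha})\in\mathrm{FS}(\langle t_\alpha\rangle)\subseteq A$, as required.

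For the inductive step I would assume the statement for $m-1$ and first apply it to the families $\Gamma_1,\ldots,\Gamma_{m-1}$ to obtain an IP-set $\langle\mathbf y_\alpha\rangle=\langle(s_{0,\alpha},\ldots,s_{m-1,\alpha})\rangle$ in $G^m$ with $D(m-1,(F_1,\ldots,F_{m-1}),c,\mathbf y_\alpha)\subseteq A$ for all $\alpha$. Now regard $H:=G^m$ as the domain semigroup and $F_m\subseteq\Gamma_m$ as a finite subset of a licit $R$-family of maps $H\to G$; the crucial observation is that the top level $f(s_0,\ldots,s_{m-1})+c(s_m)$ matches exactly the shape $f(y)+x$ appearing in the conclusion of the Polynomial Central Sets Theorem. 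Applying that theorem to $F_m$, to $A$, and to the IP-set $\langle\mathbf y_\alpha\rangle$ yields a sub-$IP$-set $\langle\mathbf z_\beta\rangle$ of $\langle\mathbf y_\alpha\rangle$ and an IP-set $\langle x_\beta\rangle$ in $G$ with $x_\beta+f(\mathbf z_\beta)\in A$ for every $f\in F_m$ and every $\beta$. Because each $\mathbf z_\beta$ is itself one of the original generators $\mathbf y_\gamma$, the lower-level memberships $D(m-1,\ldots,\mathbf z_\beta)\subseteq A$ are retained automatically. Finally I would apply the $IP$-regularity of $c$ to $\langle x_\beta\rangle$, obtaining an IP-set $\langle w_\gamma\rangle$ with $\langle c(w_\gamma)\rangle$ a sub-$IP$-set of $\langle x_\beta\rangle$; passing to the aligned sub-$IP$-set of $\langle\mathbf z_\beta\rangle$ and setting $s_{m,\gamma}:=w_\gamma$ gives $f(s_{0,\gamma},\ldots,s_{m-1,\gamma})+c(s_{m,\gamma})\in A$, which is exactly level $m$.

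The step I expect to be the main obstacle is the bookkeeping needed to guarantee that the final $(m+1)$-tuple $\langle(s_{0,\gamma},\ldots,s_{m,\gamma})\rangle$ is a genuine IP-set in $G^{m+1}$. Each of the three inputs — the inductive hypothesis, the Polynomial Central Sets Theorem, and $IP$-regularity — forces a passage to a sub-$IP$-set, and these must be aligned along a single index set so that all $m+1$ coordinates share one common FS-structure. Concretely, writing $\mathbf u_n:=\mathbf z_{\beta_n}$ for the generators of the aligned sub-$IP$-set and $w_n$ for the generators coming from $IP$-regularity, one verifies that $\langle(\mathbf u_n,w_n)\rangle_n$ generates the required IP-set in $G^{m+1}$ and that for the resulting $s_\gamma=(\mathbf u_\gamma,w_\gamma)$ every entry of $D(m,F,c,s_\gamma)$ lies in $A$. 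It is this simultaneous compatibility, rather than any individual membership, that carries the weight of the argument.
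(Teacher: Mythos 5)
Your proposal is correct and follows essentially the same route as the source: the theorem is quoted from \cite{key-1} and not reproved in this paper, but your induction --- idempotency of $p$ plus $IP$-regularity of $c$ for the base level, then the Polynomial Central Sets Theorem applied to $\Gamma_m$ viewed as maps $G^m\to G$, followed by $IP$-regularity and the alignment of the two sub-$IP$-sets along a common index sequence for the inductive step --- is exactly the one-step-extension scheme of \cite{key-1}, and it is also the iterative argument this paper uses to prove its own variant, Theorem \ref{P.deu}. Your handling of the key bookkeeping point (that $\mathbf z_\beta$ is again some $\mathbf y_\gamma$, so lower levels persist, and that $c(w_\gamma)=x_{\delta}$ with $\delta=\bigcup_{n\in\gamma}\beta_n$ matches the index at which the Central Sets conclusion is applied) is the right resolution.
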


\noindent Another basic result of Ramsey theory is N. Hindman's Theorem,
which proves the partition regularity of finite sum of a sequence.
\begin{thm}
\textup{\cite[Hindman's Theorem (1974)]{key-6} }For every finite
coloring $\mathbb{N}=C_{1}\cup C_{2}\ldots\cup C_{r}$, there exists
a sequence $\left\langle x_{n}\right\rangle {}_{n=1}^{\infty}$ in
$\mathbb{N}$and $i\in\left\{ 1,2,\ldots,r\right\} $ such that $FS\left(\left\langle x_{n}\right\rangle {}_{n=1}^{\infty}\right)\subseteq C_{i}$,
where
\[
FS\left(\left\langle x_{n}\right\rangle {}_{n=1}^{\infty}\right)=\left\{ {\displaystyle \sum_{n\in F}x_{n}:F\subseteq\mathcal{P}_{f}\left(\mathbb{N}\right)}\right\} .
\]

Moreover, for every sequence $\left\langle x_{n}\right\rangle {}_{n=1}^{\infty}$
in $\mathbb{N}$ and for any finite coloring $\left\langle x_{n}\right\rangle {}_{n=1}^{\infty}=C_{1}\cup C_{2}\ldots\cup C_{r}$,
there exists a sequence $\left\langle y_{n}\right\rangle {}_{n=1}^{\infty}$
in $\mathbb{N}$ and $i\in\left\{ 1,2,\ldots,r\right\} $ such that
$FS\left(\left\langle y_{n}\right\rangle {}_{n=1}^{\infty}\right)\subseteq C_{i}$.
\end{thm}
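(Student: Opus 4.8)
The plan is to prove both assertions using the algebraic structure of $(\beta\mathbb{N},+)$ recalled above, following the Galvin--Glazer method of idempotent ultrafilters. Since $(\beta\mathbb{N},+)$ is a compact right topological semigroup, it contains an idempotent $p$, that is, an ultrafilter with $p+p=p$ (this is the Ellis--Numakura lemma, and is exactly the existence statement underlying the discussion of idempotents in $K(\beta\mathbb{N})$ above). Given the finite coloring $\mathbb{N}=\bigcup_{i=1}^{r}C_{i}$, since $p$ is an ultrafilter exactly one color class belongs to $p$; I fix $i$ with $A:=C_{i}\in p$. It then suffices to produce a sequence $\langle x_{n}\rangle$ with $FS(\langle x_{n}\rangle)\subseteq A$.

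The engine of the argument is a self-reproducing property of members of an idempotent. For $A\in p$ I set $A^{\star}=\{x\in A:-x+A\in p\}$, where $-x+A=\{y\in\mathbb{N}:x+y\in A\}$ is the additive analogue of the $x^{-1}A$ appearing above. Using $A\in p=p+p$ together with the characterization $A\in p+p$ iff $\{x:-x+A\in p\}\in p$, one checks first that $A^{\star}\in p$, and then, crucially, that for every $x\in A^{\star}$ one also has $-x+A^{\star}\in p$. This stability under passing to $A^{\star}$ is precisely what allows an unbounded recursion.

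With this in hand I would build $\langle x_{n}\rangle$ recursively, maintaining at each stage that $FS(\langle x_{j}\rangle_{j\le n})\subseteq A^{\star}$. Begin by choosing $x_{1}\in A^{\star}$, which is possible since $A^{\star}\in p$ is nonempty. Given $x_{1},\dots,x_{n}$, the set $B_{n}=A^{\star}\cap\bigcap\{-x+A^{\star}:x\in FS(\langle x_{j}\rangle_{j\le n})\}$ is a finite intersection of members of $p$, hence lies in $p$ and is nonempty; I choose $x_{n+1}\in B_{n}$. A short induction on the size of the index set $F$ then shows $x_{F}=\sum_{j\in F}x_{j}\in A^{\star}$ for every $F$, so that $FS(\langle x_{n}\rangle)\subseteq A^{\star}\subseteq A=C_{i}$, proving the first assertion.

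For the ``moreover'' statement, given a sequence $\langle x_{n}\rangle$ and a finite coloring of $FS(\langle x_{n}\rangle)$, I would pass to the closed set $T=\bigcap_{m=1}^{\infty}\overline{FS(\langle x_{n}\rangle_{n\ge m})}\subseteq\beta\mathbb{N}$. The step I expect to be the main obstacle is verifying that $T$ is a subsemigroup: one must show that $p,q\in T$ implies $p+q\in T$, which reduces to showing each tail $FS(\langle x_{n}\rangle_{n\ge m})$ belongs to $p+q$, and this rests on the fact that every finite sum drawn from a tail splits as an initial block (seen by $p$) plus a strictly later block (seen by $q$). Granting this, $T$ is itself a compact right topological semigroup and so contains an idempotent $p$, which necessarily has every tail $FS(\langle x_{n}\rangle_{n\ge m})$ as a member. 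Selecting the color class $C_{i}\in p$ and repeating the construction above---but now choosing each generator $y_{n+1}$ to be a finite sum $\sum_{j\in H}x_{j}$ with $\min H$ exceeding all indices used so far, which is legitimate because the corresponding tail lies in $p$---produces a sequence $\langle y_{n}\rangle$ with $FS(\langle y_{n}\rangle)\subseteq FS(\langle x_{n}\rangle)$ and $FS(\langle y_{n}\rangle)\subseteq C_{i}$, completing the proof.
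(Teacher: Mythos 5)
The paper quotes Hindman's Theorem as a classical cited result and offers no proof of its own, so there is no in-paper argument to compare against step by step. Your write-up is the standard Galvin--Glazer idempotent-ultrafilter proof (essentially the one in the reference \cite{key-11} that the paper leans on throughout): the first half via the star operation $A^{\star}=\{x\in A:-x+A\in p\}$ and the recursive choice of $x_{n+1}$ in a finite intersection of members of $p$, and the ``moreover'' half via the compact subsemigroup $T=\bigcap_{m}\overline{FS\left(\left\langle x_{n}\right\rangle _{n\ge m}\right)}$ and block sums drawn from tails. Both halves are correct, and the two points you flag as needing verification (that $-x+A^{\star}\in p$ for $x\in A^{\star}$, and that $T$ is a subsemigroup) are exactly the right ones and are checked by the standard computations you indicate.
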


\subsection{ Symmetric versions of some Ramsey theoretic results}

In a recent work \cite{key-4}, M. D. Nasso shows the existence of
several monochromatic patterns in the integers obtained as values
of suitable symmetric polynomials, like Brauer's Theorem ( an extention
of van der Waerden's Theorem), Hindman's Theorem, Deuber's Theorem
for symmetric polynomials etc. (see definitions \ref{symm. 1}, \ref{symm. 2}).
\begin{thm}
\textup{\cite[Theorem 2.9]{key-4} }Let $l,k$ be integers where $l\neq0$
divides $k-1$. Then for every finite coloring $\mathbb{Z}=C_{1}\cup\ldots\cup C_{r}$
and for every $L\in\mathbb{N}$ there exist a color $C_{i}$ and elements
$a,b$ such that 
\[
a,b,\frac{1}{l}\left[\left(la+k\right)\left(lb+k\right)-k\right],\ldots,\frac{1}{l}\left[\left(la+k\right)\left(lb+k\right)^{L}-k\right]\in C_{i},
\]
where we can assume elements to be pairwise distinct.

Moreover, for positive $l\in\mathbb{N}$, the above partition regularity
property is also true if we replace the integers $\mathbb{Z}$ with
the natural numbers $\mathbb{N}$.
\end{thm}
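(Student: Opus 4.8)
The plan is to recognize that the $\varoast_{l,k}$-operation is ordinary multiplication in disguise. Writing $x\varoast_{l,k}y=\frac{1}{l}\left[(lx+k)(ly+k)-k\right]$ and setting $\phi(x)=lx+k$, a one-line computation gives $\phi(x\varoast_{l,k}y)=(lx+k)(ly+k)=\phi(x)\phi(y)$, so $\phi$ is a homomorphism from $(\mathbb{Z},\varoast_{l,k})$ into $(\mathbb{Z},\cdot)$. Since $l\mid k-1$ we have $k\equiv 1\pmod{l}$, so the image of $\phi$ is exactly $S=\{n\in\mathbb{Z}:n\equiv 1\pmod{l}\}$, which is closed under multiplication; as $l\neq 0$ the map $\phi$ is a bijection onto $S$, i.e.\ a semigroup isomorphism $(\mathbb{Z},\varoast_{l,k})\cong(S,\cdot)$. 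My first step is therefore to set up and verify this isomorphism carefully.

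Next I would transport the problem through $\phi$. Put $u=la+k=\phi(a)$ and $v=lb+k=\phi(b)$; then the target element $\frac{1}{l}[(la+k)(lb+k)^{j}-k]$ equals $\frac{1}{l}[uv^{j}-k]=\phi^{-1}(uv^{j})$. Thus the entire configuration is the $\phi^{-1}$-image of $\{u,v,uv,uv^{2},\ldots,uv^{L}\}\subseteq S$. Given a finite coloring $C$ of $\mathbb{Z}$, I transport it to a coloring $\tilde{C}=C\circ\phi^{-1}$ of $S$, so it suffices to produce a monochromatic multiplicative pattern $u,v,uv,\ldots,uv^{L}$ inside $(S,\tilde{C})$ — that is, a \emph{multiplicative Brauer configuration}.

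To find such a pattern I would fix a base $g\in S$ with $g\geq 2$, for instance $g=|l|+1$ (which satisfies $g\equiv 1\pmod{l}$). The powers $\{g^{n}:n\in\mathbb{N}\}$ form a multiplicative subsemigroup of $S$ isomorphic to $(\mathbb{N},+)$ via $n\mapsto g^{n}$, so $c'(n)=\tilde{C}(g^{n})$ is a finite coloring of $\mathbb{N}$. Applying Brauer's Theorem — the standard strengthening of van der Waerden's Theorem (Theorem 1.1) in which the common difference shares the color of the progression — yields $a',d'$ with $d',a',a'+d',\ldots,a'+Ld'$ all of the same $c'$-color. Setting $u=g^{a'}$ and $v=g^{d'}$ gives $uv^{j}=g^{a'+jd'}$, so $\{v,u,uv,\ldots,uv^{L}\}$ is monochromatic in $(S,\tilde{C})$; pulling back through $\phi^{-1}$ and reading off $a=\phi^{-1}(u)$ and $b=\phi^{-1}(v)$ produces the required configuration.

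Two technical points remain. For distinctness I would take $d'\geq 1$ and $a'\geq 1$ so that the $L+2$ exponents $d',a',a'+d',\ldots,a'+Ld'$ are pairwise distinct; since $g\geq 2$ and $\phi^{-1}$ is injective, the resulting integers are then pairwise distinct as well. For the final assertion over $\mathbb{N}$ with $l>0$, note that $\phi^{-1}(g^{n})=(g^{n}-k)/l$ is a positive integer as soon as $g^{n}>k$, so I would simply restrict $c'$ to a tail $\{n:g^{n}>k\}$ of $\mathbb{N}$ before applying Brauer's Theorem, guaranteeing $a,b\in\mathbb{N}$. I expect the only genuine obstacle to be the first step — spotting and verifying the isomorphism $\phi$ that linearizes $\varoast_{l,k}$ into multiplication on $S$ — after which the Ramsey-theoretic content reduces cleanly to the classical additive Brauer's Theorem, and the distinctness and positivity bookkeeping is routine.
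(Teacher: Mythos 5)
This theorem is only quoted in the paper (as \cite[Theorem 2.9]{key-4}); no proof of it appears in the text, so there is no internal proof to compare against. Your argument is nonetheless essentially the standard route to Di Nasso's result: the affine map $\phi(x)=lx+k$ conjugates $\varoast_{l,k}$ to ordinary multiplication on the subsemigroup $1+l\mathbb{Z}$ (here the hypothesis $l\mid k-1$ is exactly what makes the image multiplicatively closed), and the exponential embedding $n\mapsto g^{n}$ then reduces the multiplicative configuration $\{v,u,uv,\ldots,uv^{L}\}$ to the classical additive Brauer theorem. The reduction is correct. Two small points need repair. First, the distinctness claim is not quite right as stated: taking $a'\geq 1$ and $d'\geq 1$ does not exclude $a'=d'$, in which case $u=v$ and hence $a=b$; you should additionally arrange $a'\neq d'$, e.g.\ by applying Brauer's theorem with length $L+1$ and replacing $a'$ by $a'+d'$. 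Second, in the $\mathbb{N}$ case, ``restricting $c'$ to a tail before applying Brauer's theorem'' does not by itself force the common difference $d'$ to be large, since a progression lying in a tail can still have a small difference; the clean fix is to apply Brauer's theorem to the dilated coloring $n\mapsto c'(N_{0}n)$, which makes both $a'$ and $d'$ multiples of $N_{0}$ and hence guarantees $g^{a'},g^{d'}>k$, so that all the pulled-back elements lie in $\mathbb{N}$. With these routine adjustments the proof is complete; note also that Brauer's theorem (the strengthening of van der Waerden in which the common difference shares the color) is not stated anywhere in this paper, so it must be cited explicitly rather than attributed to Theorem 1.1.
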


{\Large{}}
\begin{thm}
\textup{\cite[Theorem 2.4]{key-4} }Assume that $l,k\neq0$ are integers
where $l$ divides $k\left(k-1\right)$. Then for every finite coloring
$\mathbb{Z}=C_{1}\cup\ldots\cup C_{r}$ there exist an injective sequence
$\left\langle x_{n}\right\rangle _{n=1}^{\infty}$ of integers, and
a color $C_{i}$ such that $\mathfrak{G}_{l,k}\left(x_{n}\right)_{n=1}^{\infty}\subseteq C_{i}$
{[} see Definition \ref{symm. 2} and Definition \ref{seq. sum}{]}.

More generally, for every injective sequence of integers $\left\langle x_{n}\right\rangle _{n=1}^{\infty}$
and for every finite coloring $\mathfrak{G}_{l,k}\left(x_{n}\right)_{n=1}^{\infty}=C_{1}\cup\ldots\cup C_{r}$
of the corresponding $\left(l,k\right)$-symmetric system, there exist
an injective sequence of integers $\left\langle x_{n}\right\rangle _{n=1}^{\infty}$
and a color $C_{i}$ such that $\mathfrak{G}_{l,k}\left(y_{n}\right)_{n=1}^{\infty}\subseteq C_{i}$. 

Moreover, for positive $l\in\mathbb{N}$, the above partition regularity
properties are also true if we replace the integers $\mathbb{Z}$
with the natural numbers $\mathbb{N}$.
\end{thm}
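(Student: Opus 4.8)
\section*{Proof proposal}

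The plan is to recognize that the hypothesis $l\mid k(k-1)$ is exactly what turns $\varoast_{l,k}$ into an associative operation on $\mathbb{Z}$, and that $\mathfrak{G}_{l,k}\left(\langle x_n\rangle\right)$ is nothing but the set of finite $\varoast_{l,k}$-products of the sequence. With this reading, the whole statement becomes an instance of Hindman's Theorem for an abstract commutative semigroup, transported through an explicit isomorphism. First I would expand the operation: writing $a\varoast_{l,k}b=\tfrac{1}{l}\left[(la+k)(lb+k)-k\right]$, a direct computation gives
\[
a\varoast_{l,k}b = lab + k(a+b) + \frac{k(k-1)}{l},
\]
so $\varoast_{l,k}$ takes integer values precisely when $l\mid k(k-1)$, which is the standing hypothesis. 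Next I would introduce $\phi:\mathbb{Z}\to\mathbb{Z}$, $\phi(a)=la+k$; it is injective and satisfies $\phi\left(a\varoast_{l,k}b\right)=\phi(a)\phi(b)$, so it is an isomorphism of $\left(\mathbb{Z},\varoast_{l,k}\right)$ onto its image $\left(k+l\mathbb{Z},\cdot\right)$. The same divisibility condition gives $(la+k)(lb+k)\equiv k\pmod l$, i.e. that $k+l\mathbb{Z}$ is closed under ordinary multiplication; hence $\left(\mathbb{Z},\varoast_{l,k}\right)$ inherits associativity and commutativity and is a commutative semigroup, and under $\phi$ the symmetric system $\mathfrak{G}_{l,k}\left(\langle x_n\rangle\right)$ becomes the multiplicative $FP$-set of $\langle\phi(x_n)\rangle$ in $k+l\mathbb{Z}$.

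With this dictionary, the core assertion would follow from the Galvin--Glazer form of Hindman's Theorem applied inside the discrete semigroup $\left(k+l\mathbb{Z},\cdot\right)$: pulling a finite coloring of $\mathbb{Z}$ back along $\phi^{-1}$ to a coloring of $k+l\mathbb{Z}$, an idempotent ultrafilter $p\in\beta\left(k+l\mathbb{Z}\right)$ selects a color class $C$ with $C\in p=p\cdot p$, and the standard inductive extraction produces a sequence whose $FP$-set lies in $C$; applying $\phi^{-1}$ returns the sequence in $\mathbb{Z}$ with $\mathfrak{G}_{l,k}\subseteq C_i$. The second (``Moreover'', recoloring a given system) assertion is the relative version: given an injective $\langle x_n\rangle$, I would take an idempotent in the nonempty compact subsemigroup $\bigcap_m \overline{FP\left(\langle\phi(x_n)\rangle_{n\ge m}\right)}$ of $\beta\left(k+l\mathbb{Z}\right)$ and extract a monochromatic sub-$FP$-set, which transports back to the required sub-system. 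The final $\mathbb{N}$-assertion for positive $l$ is obtained by repeating the argument inside the tail subsemigroup $\left\{z\in k+l\mathbb{Z}:z\ge T\right\}$ for a large threshold $T$, whose $\phi^{-1}$-preimage lies in $\mathbb{N}$.

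The step needing genuine care, and the one I would treat as the main obstacle, is ensuring that the resulting configuration is non-degenerate: that the sequence is \emph{injective} and the symmetric system genuinely infinite, rather than collapsing onto the multiplicative idempotents $0,\pm1$. To arrange this I would work not with an arbitrary idempotent but with one living in $\overline{A}$, where $A=\left\{z\in k+l\mathbb{Z}:|z|\ge 2\right\}$. Since products of elements of absolute value $\ge 2$ again have absolute value $\ge 2$, the set $A$ is a subsemigroup, $\overline{A}$ is a compact right topological subsemigroup of $\beta\left(k+l\mathbb{Z}\right)$, and therefore (by Ellis) contains an idempotent $p$; as $|z^2|=|z|^2>|z|$ for $z\in A$, this $p$ is necessarily free, so every $x_n$ can be chosen in $A$ with strictly increasing absolute values. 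This forces the $\varoast_{l,k}$-products over nested index sets to grow, which in particular makes $\langle x_n\rangle$ injective and $\mathfrak{G}_{l,k}\left(\langle x_n\rangle\right)$ infinite, completing the reduction. I expect the remaining verifications (associativity transfer, the relative-idempotent construction, and the threshold bookkeeping in the $\mathbb{N}$-case) to be routine once these structural facts are in place.
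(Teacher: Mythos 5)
This theorem appears in the paper only as a quoted result from \cite{key-4} (Di Nasso, Theorem 2.4), with no proof given, so there is no in-paper argument to compare against; your proposal is correct and reconstructs essentially the original argument. The affine conjugation $\phi(a)=la+k$ identifies $\left(\mathbb{Z},\varoast_{l,k}\right)$ with the multiplicative subsemigroup $\left(k+l\mathbb{Z},\cdot\right)$ (closure under multiplication being exactly the hypothesis $l\mid k(k-1)$), after which all three assertions are instances of the Galvin--Glazer form of Hindman's theorem, and your device of locating the idempotent in $\overline{\left\{ z\in k+l\mathbb{Z}:\left|z\right|\geq2\right\} }$, which contains no principal idempotents, correctly secures the injectivity of the resulting sequence.
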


{\Large{}}
\begin{thm}
\textup{\cite[Theorem 2.7]{key-4} }Let $l,k$ be integers where $l\neq0$
divides $k-1$, let $m\in\mathbb{N}$, and let $L\in\mathbb{N}$.
Then for every finite coloring $\mathbb{Z}=C_{1}\cup\ldots\cup C_{r}$
there exist a color $C_{i}$ and elements $a_{0},\ldots,a_{m}\in C_{i}$,
such that for every $j=1,\ldots,m$ and for all $n_{0},\ldots,n_{j-1}\in\left\{ 0,1,\ldots,L\right\} $:
\[
\frac{1}{l}\left(\left(la_{j}+k\right)\prod_{s=0}^{j-1}\left(la_{s}+k\right)^{n_{s}}-k\right)\in C_{i}.
\]
where we can assume that $\left(la_{j}+k\right)\neq0,1,-1$ for all
$j$.\textup{}\\
\end{thm}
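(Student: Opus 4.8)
The plan is to linearise the exotic operation through its conjugating map and then import the classical Deuber--Brauer theorem. Recall that the $\varoast_{l,k}$-operation is $x\varoast_{l,k}y=\frac1l\left[(lx+k)(ly+k)-k\right]$ and that the shift $\phi(x)=lx+k$ satisfies $\phi(x\varoast_{l,k}y)=\phi(x)\phi(y)$. Since $l\mid k-1$ we have $l\mid k(k-1)$, so $(lx+k)(ly+k)\equiv k^2\equiv k\pmod l$; hence $\varoast_{l,k}$ takes integer values and $\phi$ is an isomorphism from $(\mathbb Z,\varoast_{l,k})$ onto the multiplicatively closed coset $(k+l\mathbb Z,\cdot)$. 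Writing $b_s=\phi(a_s)=la_s+k$, the element $\frac1l\bigl((la_j+k)\prod_{s=0}^{j-1}(la_s+k)^{n_s}-k\bigr)$ is exactly $\phi^{-1}\bigl(b_j\prod_{s=0}^{j-1}b_s^{n_s}\bigr)$. Thus, transporting the given colouring $\mathbb Z=C_1\cup\dots\cup C_r$ to a colouring $\chi'$ of $k+l\mathbb Z$ by $\chi'(b)=\chi(\phi^{-1}(b))$, the theorem becomes equivalent to producing a $\chi'$-monochromatic multiplicative Deuber configuration $\{b_j\prod_{s<j}b_s^{n_s}:1\le j\le m,\ n_s\in\{0,\dots,L\}\}\cup\{b_0\}$ inside the coset.

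The second step is to seek the generators $b_0,\dots,b_m$ among the powers of a single base. I would fix any $g\in k+l\mathbb Z$ with $|g|\ge 2$ (such $g$ exists, the coset being infinite), noting $g^e\in k+l\mathbb Z$ for every $e\ge 1$ by multiplicative closure. Setting $b_s=g^{e_s}$ for positive integers $e_s$ gives $b_j\prod_{s<j}b_s^{n_s}=g^{\,e_j+\sum_{s<j}n_se_s}$, so the whole multiplicative configuration collapses to the set of powers of $g$ whose exponents range over the \emph{additive} configuration $\{e_j+\sum_{s<j}n_se_s\}\cup\{e_0\}$. Consequently it suffices to finitely colour $\mathbb N$ by $e\mapsto\chi'(g^{e})$ and to find a monochromatic copy of this additive configuration in the exponents.

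That additive configuration is precisely a one-sided Deuber $(m,L,1)$-set. Applying Deuber's theorem over $\mathbb N$ (parameters $p=L$, $c=1$, with positive generators) to the colouring $e\mapsto\chi'(g^{e})$ yields positive integers $e_0,\dots,e_m$ for which every $e_j+\sum_{s<j}n_se_s$ with $n_s\in\{0,\dots,L\}$, together with $e_0$, receives one fixed colour. Putting $b_s=g^{e_s}$ and $a_s=\phi^{-1}(b_s)=\frac{g^{e_s}-k}{l}\in\mathbb Z$ then places all the required values $\phi^{-1}(g^{\,e_j+\sum n_se_s})$ in a single class $C_i$, with $a_0,\dots,a_m\in C_i$ recovered as the $n_s=0$ (and level-$0$) instances. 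The non-degeneracy $(la_j+k)=b_j=g^{e_j}\ne 0,1,-1$ is automatic from $|g|\ge 2$ and $e_j\ge 1$; for the final $\mathbb N$-statement (positive $l$) one simply takes $g>k$ in the coset, which forces every $a_s=\frac{g^{e_s}-k}{l}$ and every combination to be a positive integer.

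The delicate points, rather than the transfer itself, are the bookkeeping constraints that make the single-base reduction legitimate: the exponent system must be run with \emph{positive} generators and \emph{non-negative} coefficients, so that each power $g^{\,e_j+\sum n_se_s}$ is a genuine integer lying in the multiplicatively closed coset, and the degeneracy and positivity demands must be absorbed into the choice of $g$. Justifying that the one-sided $(m,L,1)$-system is partition regular over $\mathbb N$ is where one leans on the classical Deuber--Brauer theorem (equivalently, on Rado's columns condition). If one prefers not to force all generators onto a common base, the robust alternative is to work directly in $(\beta\mathbb Z,\cdot)$: choose a minimal idempotent $p$ in the closure $\overline{k+l\mathbb Z}$, so that the $\chi'$-class belonging to $p$ is multiplicatively central, and then build $b_0,\dots,b_m$ recursively via the Central Sets Theorem applied to the multiplicative semigroup; this is the route that generalises smoothly to the polynomial versions treated later in the paper.
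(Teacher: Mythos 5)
The paper never proves this statement: it is quoted verbatim from Di Nasso (\cite[Theorem 2.7]{key-4}) as background, so there is no internal proof to compare against, and your argument has to be judged on its own. As far as I can check it is correct and self-contained. Both pillars hold: (i) since $l\mid k-1$ implies $l\mid k(k-1)$, the coset $k+l\mathbb{Z}$ is closed under multiplication and $\phi(x)=lx+k$ is an isomorphism of $(\mathbb{Z},\varoast_{l,k})$ onto $(k+l\mathbb{Z},\cdot)$, so the displayed quantity is exactly $\phi^{-1}\bigl(b_j\prod_{s<j}b_s^{n_s}\bigr)$ with $b_s=\phi(a_s)$; (ii) forcing $b_s=g^{e_s}$ for one fixed $g\in k+l\mathbb{Z}$ with $|g|\ge 2$ collapses the multiplicative configuration to the additive system $\{e_0\}\cup\{e_j+\sum_{s<j}n_se_s\}$ in the exponents, which is the one-sided $(m,L,1)$-set and hence partition regular over $\mathbb{N}$ by Deuber's theorem with positive generators (the one-sided family is a subfamily of the two-sided $(m,L,1)$-set, so no separate partition-regularity argument is needed). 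Every exponent $e_j+\sum n_se_s$ is at least $1$, so each power of $g$ that occurs lies in the multiplicatively closed coset and its $\phi^{-1}$-image is an integer; the elements $a_j$ themselves are the all-$n_s=0$ instances together with the level-$0$ row; and $la_j+k=g^{e_j}\notin\{0,\pm 1\}$ is automatic from $|g|\ge2$, $e_j\ge1$. Your single-base reduction is more elementary than the machinery used in \cite{key-4} and in the body of this paper, which work with idempotent and minimal ultrafilters in $(\beta\mathbb{Z},\varoast_{l,k})$ (your closing sketch of that alternative is the route that scales to the IP and polynomial refinements); what the elementary route gives up is those stronger infinitary conclusions, but for the finitary statement at hand it is entirely adequate.
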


\noindent $\bullet$ \textbf{\large{}Organization of this paper:}{\large{}\vspace{.2in}}{\large\par}

\noindent In \textcolor{blue}{Section $2$}, we have recalled the
Hales-Jewett Theorem \cite{key-7} and its polynomial extension \cite{key-9}.
Also, we have looked back on some facts about $\varoast_{l,k}$ operation.\textcolor{blue}{{}
Section $3$} is devoted to study some new additive structures in
piecewise syndetic sets of $\left(\mathbb{N},\varoast_{l,k}\right)$
and some analogues of geo-arithmetic progression of symmetric polynomials.
In this section, we have mainly used the Hales-Jewett Theorem and
a variant of it, given by M. Beiglb\"ock in \cite{key-12}. In \textcolor{blue}{Section
$4$}, an analogue of the polynomial van der Waerden's Theorem has
been derived. This section uses the version of Polynomial Hales-Jewett
Theorem given by M. Walter in \cite{key-9}. Influenced by Section
$4$, we have studied polynomials from $\left(\mathbb{Z},+\right)$
to $\left(\mathbb{Z},\varoast_{l,k}\right)$ explicitly in \textcolor{blue}{Section
$5$}, and provided an analogue of the Central Sets Theorem using
\cite{key-1}. Next in\textcolor{blue}{{} Section $6$}, we deduced
a variant of the Polynomial Deuber's Theorem for polynomials from
$\left(\mathbb{Z}^{i},+\right)$ to $\left(\mathbb{Z},\varoast_{l,k}\right)$,
$i\in\mathbb{N}$, which gives some new examples. At the end, in \textcolor{blue}{Section
$7$}, we will provide two new associate operations over the set of
natural numbers $\mathbb{N}$ to obtain some new configurations, using
Hales-Jewett Theorem.

\section{Preliminaries}

Let us first recall the Hales-Jewett theorem, its polynomial extension
and one of it's variant, which will be necessary throughout our work.

\subsection{ Hales-Jewett theorem}

Let $\omega=\mathbb{N}\cup\left\{ 0\right\} $, where $\mathbb{N}$
is the set of positive integers. Then $\omega$ is the first infinite
ordinal. Given a nonempty set $\mathbb{A}$ (or alphabet) we let any
finite word is of the form $w=a_{1}a_{2}\ldots a_{n}$ with $n\geq1$
and $a_{i}\in\mathbb{A}$. The quantity $n$ is called the length
of $w$ and denoted $\left|w\right|$. Let $v$ (a variable) be a
letter not belonging to $\mathbb{A}$. By a variable word over $\mathbb{A}$
we mean a word $w$ over $\mathbb{A}\cup\left\{ v\right\} $ with
$\left|w\right|_{v}\geq1$. For any variable word $w$, $w\left(a\right)$
is the result of replacing each occurrence of $v$ by $a$.

For any two sets $A,B$ and a function $f:A\rightarrow B$, $\text{Dom}\left(f\right)$
is the domain of the function $f$. A located word $\alpha$ is a
function from a finite set $\text{Dom}\left(\alpha\right)\subseteq\mathbb{N}$
to $\mathbb{A}$. The set of all located words will be denoted by
$L\left(\mathbb{A}\right)$. Note that for located words $\alpha,\beta$
satisfying $\text{Dom}\left(\alpha\right)\cap\text{Dom}\left(\beta\right)=\emptyset$,
$\alpha\cup\beta$ is also located word.

The following theorem is due to A. W. Hales and R. I. Jewett and a
very useful tool to us.
\begin{thm}
\cite[Hales-Jewett Theorem (1963)]{key-7} For all values $t,r\in\mathbb{N}$,
there exists a number $\text{HJ}\left(r,t\right)$ such that, if $N\geq\text{HJ}\left(r,t\right)$
and $\left[t\right]^{N}$ is $r$ colored then there will exists a
monochromatic combinatorial line.
\end{thm}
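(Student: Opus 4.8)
The plan is to establish the existence of $\mathrm{HJ}(r,t)$ by induction on the alphabet size $t$, keeping the number of colours free (so the inductive hypothesis is available for every number of colours, not just $r$). The base case $t=1$ is immediate: $[1]^{N}$ is a single point and any variable word substitutes the unique letter to give a trivially monochromatic combinatorial line, so one may take $\mathrm{HJ}(r,1)=1$. Thus I would assume $t\ge 2$ and that $\mathrm{HJ}(r',t-1)$ exists for every $r'\in\mathbb{N}$.

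The engine of the induction is the classical device of \emph{colour-focused lines}. Call combinatorial lines $\ell_{1},\dots,\ell_{k}$ in $[t]^{N}$ colour-focused at a point $z$ if each $\ell_{i}$ takes the value $z$ when its variable is set to $t$ (so all the lines share the common endpoint $z$), each $\ell_{i}$ is monochromatic on its first $t-1$ points, and the $k$ resulting near-colours are pairwise distinct. The first thing I would record is the key observation that $r$ colour-focused lines already force a monochromatic line: the focus $z$ receives one of the $r$ colours, that colour must equal the near-colour of one of the lines, and that line is then monochromatic on all $t$ of its points.

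Next I would run a subsidiary induction on $k$, where $P(k)$ asserts the existence of a dimension $N_{k}$ such that every $r$-colouring of $[t]^{N_{k}}$ either contains a monochromatic line or contains $k$ colour-focused lines. Here $P(0)$ is vacuous and $P(r)$ together with the observation yields the theorem. For the step $P(k)\Rightarrow P(k+1)$ I would split the coordinates into an $A$-block of size $a=\mathrm{HJ}(R,t-1)$, where $R=r^{\,t^{N_{k}}}$ is the number of $r$-colourings of $[t]^{N_{k}}$, and a $B$-block of size $N_{k}$. A colouring $\chi$ of the big cube induces a colouring $\Psi(u)=(w\mapsto\chi(uw))$ of the reduced cube $[t-1]^{a}$ by the $R$ possible colourings of the $B$-block; applying the inductive hypothesis for alphabet $t-1$ produces a line $\lambda$ over $\{1,\dots,t-1\}$ whose $t-1$ near-points all induce one and the same colouring $\chi^{*}$ of the $B$-block. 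One then applies $P(k)$ to $\chi^{*}$ on the $B$-block.

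The delicate point, which I expect to be the main obstacle, is the recombination. From the $k$ colour-focused lines given by $P(k)$ in the $B$-block (focused at some $z$, with distinct near-colours $c_{1},\dots,c_{k}$), I would manufacture $k$ new lines in the big cube by \emph{merging} the variable set of $\lambda$ with the variable set of each $B$-line, so that setting the variable to $s\le t-1$ forces the $A$-part onto $\lambda(s)$ and hence exploits $\chi_{\lambda(s)}=\chi^{*}$ to keep the near-colour equal to $c_{i}$, while the common endpoint becomes $\lambda(t)\cdot z$. A further line, built from $\lambda$ alone with $B$-part frozen at $z$, has constant near-colour $\chi^{*}(z)$ and the same endpoint $\lambda(t)\cdot z$. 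Two cases must then be reconciled: if $\chi^{*}(z)$ coincides with some $c_{i}$ the corresponding $B$-line was already fully monochromatic and lifts to a monochromatic line in the big cube; otherwise $\chi^{*}(z)$ is a genuinely new colour and one obtains $k+1$ colour-focused lines, establishing $P(k+1)$. Arranging the merged variable sets so that both the colour control (driving the $A$-part through the near-points $\lambda(1),\dots,\lambda(t-1)$) and the single shared focus $\lambda(t)\cdot z$ hold simultaneously is precisely the step that must be set up with care.
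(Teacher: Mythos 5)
The paper does not prove this statement at all: the Hales--Jewett Theorem is quoted in Section 2 as a classical result, with the proof deferred entirely to the cited reference \cite{key-7}, and it is then used as a black box in Sections 3, 4 and 7. Your proposal therefore cannot be compared against an internal argument; what it supplies is the standard ``colour-focused lines'' proof (the streamlined double induction found in modern treatments, rather than Hales and Jewett's original argument), and as far as I can check it is correct in all essentials. The outer induction on the alphabet size $t$ with the number of colours universally quantified, the subsidiary induction $P(k)$ on the number of colour-focused lines, the pigeonhole observation that $r$ such lines force a monochromatic one, the choice $a=\mathrm{HJ}(R,t-1)$ with $R=r^{\,t^{N_k}}$, and the recombination in which the variable set of $\lambda$ is merged with that of each $B$-line so that the near-points stay governed by $\chi^{*}$ while all the lines focus at $\lambda(t)\cdot z$ --- these are exactly the right ingredients, and your final case split (either $\chi^{*}(z)$ repeats a near-colour and a fully monochromatic line lifts to the big cube, or one gains a $(k+1)$-st focused line) closes the induction. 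Two small points you should make explicit when writing this up: the outcome of $P(k)$ applied to $\chi^{*}$ may already be a monochromatic line in the $B$-block, which must likewise be lifted by prepending any $A$-word inducing $\chi^{*}$; and the evaluation $\lambda(t)$ lands in $[t]^{a}$ rather than $[t-1]^{a}$, which is harmless but worth a sentence. Neither is a gap in the mathematics.
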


The word space $\left[t\right]^{N}$ is called Hales-Jewett space
or H-J space. The number $\text{HJ}\left(r,t\right)$ is called Hales-Jewett
number.

Now let us recall the variant of the Hales-Jewett theorem established
by M. Beiglb\"ock in \cite{key-12}.
\begin{thm}
\label{bhj}\cite[Theorem 3]{key-12} Let $\mathcal{F}$ be a partition
regular family of finite subsets of $\mathbb{N}$ which contains no
singletons and let $\mathbb{A}$ be a finite alphabet. For any finite
colouring of $L\left(\mathbb{A}\right)$ there exist $\alpha\in L\left(\mathbb{A}\right),\gamma\in\mathcal{P}_{f}\left(\mathbb{N}\right)$
and $F\in\mathcal{F}$ such that $\text{Dom}\left(\alpha\right),\gamma$
and $F$ are pairwise disjoint and 
\[
\left\{ \alpha\cup\left(\gamma\cup\left\{ t\right\} \right)\times\left\{ s\right\} :s\in\mathbb{A},t\in F\right\} 
\]
is monochromatic.
\end{thm}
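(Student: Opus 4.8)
The statement superposes two Ramsey phenomena: the Hales--Jewett phenomenon governing the letter $s\in\mathbb{A}$, and the partition regularity of $\mathcal{F}$ governing the extra position $t$. The plan is to separate these as far as possible and then glue them with a single ultrafilter. First I would record the position data as an ultrafilter: since $\mathcal{F}$ is partition regular, the sets omitting every member of $\mathcal{F}$ cannot cover $\mathbb{N}$ by finitely many, so their complements have the finite intersection property and extend to an ultrafilter $p\in\beta\mathbb{N}$ each member of which contains a member of $\mathcal{F}$; because $\mathcal{F}$ contains no singletons, every singleton is one of the omitted sets, so $p$ is nonprincipal. This $p$ is the only place the family $\mathcal{F}$ enters.

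With $p$ in hand the theorem reduces to the following purely Hales--Jewett-type statement, call it $(\star)$: there exist $\alpha\in L(\mathbb{A})$, a finite block $\gamma$, and a set $P\in p$ (which we may shrink by the finite set $\text{Dom}(\alpha)\cup\gamma$, staying in $p$, so that $\text{Dom}(\alpha),\gamma,P$ are pairwise disjoint) such that for every $t\in P$ the combinatorial line $\{\alpha\cup(\gamma\cup\{t\})\times\{s\}:s\in\mathbb{A}\}$ is monochromatic. Granting $(\star)$, let $c(t)$ be the colour of that line; the sets $P_{j}=\{t\in P:c(t)=j\}$ partition $P$, so since $p$ is an ultrafilter some $P_{j_{0}}\in p$, whence $P_{j_{0}}$ contains a member $F\in\mathcal{F}$. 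For this $F$ the whole set $\{\alpha\cup(\gamma\cup\{t\})\times\{s\}:s\in\mathbb{A},\,t\in F\}$ has colour $j_{0}$, which is exactly the conclusion, and pairwise disjointness of $\text{Dom}(\alpha),\gamma,F$ is inherited from $(\star)$ via $F\subseteq P$.

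To prove $(\star)$ I would work in the algebra of the partial semigroup of located words under disjoint union (with the apartness convention $\max\text{Dom}(\cdot)<\min\text{Dom}(\cdot)$), and run the idempotent/Galvin--Glazer proof of the Hales--Jewett theorem for variable located words inside a member of a suitable idempotent $e$, so that substituting the variable by each $s\in\mathbb{A}$ keeps the line in one colour and produces the common stem $\alpha$ and the common block $\gamma$. The new ingredient is to feed the reservoir of admissible extra positions through $p$: one adjoins $t$ as a single-point located word $\{t\mapsto s\}$ slaved to the variable, and arranges, via a tensor $e\otimes p$ of the located-word idempotent with $p$, that the set of $t$ for which the extended line stays monochromatic with the \emph{same} $\gamma$ lies in $p$. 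Because positions are adjoined one at a time, the extra set is a genuine singleton $\{t\}$ rather than a block.

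The hard part is exactly this last coupling in $(\star)$. Choosing $\gamma$ by a Hales--Jewett argument and choosing the positions by partition regularity are not independent: the colour of $\alpha\cup(\gamma\cup\{t\})\times\{s\}$ depends on $t$ and $s$ together, so one must force a single block $\gamma$ to work simultaneously for a $p$-large family of singleton positions. The tempting shortcut of extracting a monochromatic combinatorial subspace fails here, and this is worth isolating: a two-letter parity colouring shows that monochromatic subspaces with \emph{singleton} variable blocks need not exist, so the extra position cannot be read off as one coordinate of a subspace block and must instead be adjoined individually inside the located-word semigroup. Making the idempotent $e$ and the ultrafilter $p$ cooperate so that this individual adjunction remains in one colour over a $p$-large reservoir, while keeping $\text{Dom}(\alpha),\gamma,F$ pairwise disjoint, is the crux of the argument.
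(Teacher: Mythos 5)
First, a point of reference: the paper does not prove this statement at all --- it is quoted as Theorem 3 of Beiglb\"ock's paper \cite{key-12} and used as a black box in Section 3. So there is no in-paper proof to compare against; the relevant benchmark is Beiglb\"ock's original argument, and your outline is in fact a reconstruction of exactly that strategy: extract from the partition regular family $\mathcal{F}$ an ultrafilter $p$ all of whose members contain a member of $\mathcal{F}$, then couple $p$ with an idempotent in the partial semigroup of located words. Your first two paragraphs are correct and complete. The existence of $p$ follows from the finite intersection property exactly as you say (using that the family of sets containing no member of $\mathcal{F}$ is closed under taking subsets), nonprincipality follows from the no-singletons hypothesis, and the reduction of the theorem to $(\star)$ --- colour each $t$ by the colour of its line, pick a $p$-large colour class, find $F\in\mathcal{F}$ inside it --- is airtight and correctly delivers monochromaticity over $t\in F$ and $s\in\mathbb{A}$ simultaneously.

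The gap is that $(\star)$ is never proven, and $(\star)$ is where the entire content of the theorem lives. Everything after ``To prove $(\star)$ I would work in\ldots'' is a plan rather than an argument: you do not construct the compact right topological semigroup on the located (variable) words, do not exhibit the closed subsemigroup in which the idempotent $e$ must sit so that all substitution maps $w\mapsto w(s)$ send it to a common ultrafilter, do not define the object $e\otimes p$ or verify it lies where you need it, and do not explain how a single block $\gamma$ is extracted that works simultaneously for a $p$-large reservoir of adjoined singleton positions $t$. You yourself flag this coupling as ``the crux of the argument,'' so the proposal stops precisely where the proof has to begin. Your parity-colouring observation --- that one cannot shortcut by peeling a singleton wildcard coordinate off a monochromatic subspace --- is correct and worth keeping, but it diagnoses the difficulty rather than resolving it. To close the gap you would need to carry out the idempotent construction in the located-word setting in detail, essentially as in \cite{key-12}.
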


The polynomial version of Hales-Jewett Theorem has been first established
in \cite{key-3}, using the methods of topological dynamics and then
it is proved combinatorially by M. Walter in \cite{key-9}.
\begin{thm}
\label{PHJ} \cite[Polynomial Hales-Jewett Theorem]{key-9} For any
$q,k,d$ there exists $N$ such that whenever $Q=Q\left(N\right)=\left[q\right]^{N}\times\left[q\right]^{N\times N}\times\cdots\times\left[q\right]^{N^{d}}$
is $k$-colored there exist $a\in Q$ and $\gamma\subseteq\left[N\right]$
such that the set of points 
\[
\left\{ a\oplus x_{1}\gamma\oplus x_{2}\left(\gamma\times\gamma\right)\oplus\cdots\oplus x_{d}\gamma^{d}:1\leq x_{i}\leq q\right\} 
\]
 is monochromatic.
\end{thm}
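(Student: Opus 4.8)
The plan is to prove Theorem~\ref{PHJ} by induction on the degree $d$, taking the Hales-Jewett Theorem as the base case and building the higher degrees on top of it by a color-focusing argument. For the induction to close it is essential to prove a formally stronger statement, so I would fix the alphabet size $q$ once and for all and prove, by induction on $d$, the assertion $\text{PHJ}(d)$: for every number of colors $k$ there is an $N$ so that every $k$-colouring of $Q(N)=\prod_{i=1}^{d}[q]^{N^{i}}$ admits a monochromatic polynomial line $\{a\oplus x_{1}\gamma\oplus\cdots\oplus x_{d}\gamma^{d}:1\le x_{i}\le q\}$. Carrying $k$ as a free parameter (rather than fixing it) is precisely what lets the color-focusing step inside the induction iterate. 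The base case $d=1$ is immediate: here $Q(N)=[q]^{N}$ and a polynomial line is just an additive combinatorial line in a word space over the alphabet $[q]$, which is the content of the Hales-Jewett Theorem in its additive reformulation, so $\text{PHJ}(1)$ holds with $N=\text{HJ}(k,q)$.

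In the inductive step I would isolate the single genuine difficulty. If the moving blocks $\gamma,\gamma^{2},\ldots,\gamma^{d}$ were allowed to be powers of \emph{distinct} sets $\gamma_{1},\ldots,\gamma_{d}$, the theorem would reduce to an iterated (product) application of $\text{PHJ}(d-1)$ and the Hales-Jewett Theorem, which is routine Ramsey-theoretic bookkeeping. The entire content of the polynomial statement is that all the blocks must be powers of \emph{one and the same} set $\gamma$, and it is this coupling across degrees through a common $\gamma$ that the induction must manufacture. To do so I would split a coordinate point of $[N]^{d}$ according to whether or not it lies in the diagonal block determined by a candidate $\gamma\subseteq[N]$, use $\text{PHJ}(d-1)$ on the components of degree $\le d-1$ to fix their behaviour along the lower powers of $\gamma$, and then apply the Hales-Jewett Theorem over a large auxiliary alphabet that encodes the already-chosen lower-degree data in order to handle the top block $\gamma^{d}$.

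The two applications cannot be made independently, because the $\gamma$ that the Hales-Jewett step produces for the top block must be the \emph{same} $\gamma$ whose lower powers were fixed by the induction hypothesis. The way around this is to build $\gamma$ in stages, enlarging $N$ enough at each stage (a fusion / stepping-up argument) so that the coordinates of $\gamma$ can be chosen to satisfy the top-degree and the lower-degree requirements simultaneously, with a pigeonhole on the induced colourings keeping the growing family of candidate lines monochromatic. I expect this inductive step, and specifically this diagonal coupling of all degrees to a single $\gamma$, to be the main obstacle; everything else is bookkeeping. One must also watch the quantifier order carefully, fixing $q$ globally and letting only $k$ and $N$ vary, since a naive induction that tries to grow $q$ and $d$ together does not terminate.

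As an alternative route one could bypass the combinatorics entirely and run the Bergelson--Leibman dynamical argument, deriving the statement from $IP$-recurrence of the commuting family of ``shift by $x_{i}\gamma^{i}$'' transformations in a minimal idempotent ultrafilter; but since the surrounding paper quotes the combinatorial proof of M. Walter, the staged induction above is the approach I would follow.
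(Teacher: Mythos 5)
The first thing to say is that the paper contains no proof of this statement at all: Theorem \ref{PHJ} is imported verbatim from M. Walter's paper \cite{key-9} (the original dynamical proof being \cite{key-3}) and is used in Section 4 as a black box, so there is no in-paper argument to compare yours against. Judged on its own terms, your sketch correctly identifies the strategy of Walter's combinatorial proof (an induction anchored at the Hales--Jewett Theorem, with a Shelah-style colour-focusing step) and correctly isolates the real difficulty, namely that all the moving blocks must be powers of one and the same $\gamma$.

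There is, however, a genuine gap at exactly the point you defer. If you build $\gamma$ in stages as $\gamma=\gamma_{1}\cup\cdots\cup\gamma_{r}$, then $\gamma^{d}$ expands into cross terms $\gamma_{i_{1}}\times\cdots\times\gamma_{i_{d}}$ with not all indices equal; these are degree-$d$ blocks that are \emph{not} $d$-th powers of a single set, so they are covered neither by your $\text{PHJ}(d-1)$ hypothesis nor by an ordinary Hales--Jewett application for the top block. This is precisely why Walter (following Bergelson--Leibman) does not induct on the degree $d$ alone but on a well-ordered \emph{type} attached to a general system of set-polynomials: the focusing step strictly decreases the type (it removes top-degree monomials at the cost of creating lower-type mixed ones), and only that finer parameter makes the induction terminate. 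For the same reason your claim that the version with distinct sets $\gamma_{1},\ldots,\gamma_{d}$ would be ``routine Ramsey-theoretic bookkeeping'' is doubtful --- that more general statement is essentially the set-polynomial theorem itself. So the plan points in the right direction, but the diagonal coupling you flag as the main obstacle is not something a pigeonhole on induced colourings will close by itself; it requires the reformulation in terms of set-polynomials and the lexicographic induction on their types, and until that is supplied the proposed induction on $d$ does not terminate.
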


The combinatorial configurations we are interested in are symmetric,
in the sense that they originate from suitable symmetric polynomials.
Now, we recall some definitions from \cite{key-4}.

Now we will recall review the $\varoast_{l,k}$ operation.

\subsection{A quick analysis of $\varoast_{l,k}$ operation}

In a recent work \cite{key-4}, M. Di. Nasso in has introduced the
associative operation $\varoast_{l,k}$ on $\mathbb{Z}$ to deduce
many symmetric type configurations in Ramsey theory. The mechanism
of defining this operation is to lift up the multiplicative operation
of the affine space via an isomorphism. Surprisingly $\left(\mathbb{Z},\varoast_{l,k}\right)$
becomes a group. Now we will recall some basic facts from \cite{key-4}
and provide some new observations regarding this operation, which
will be necessary in our work.
\begin{defn}
\label{symm. 1} For $j=1,2,\ldots,n$ the elementary symmetric polynomial
in $n$ variables is the polynomial: 
\[
e_{j}\left(X_{1},X_{2},\ldots,X_{n}\right)=\sum_{1\leq i_{1}\leq\ldots\leq i_{j}\leq n}X_{i_{1}}X_{i_{2}}\cdots X_{i_{j}}=\sum_{\emptyset\neq G\subseteq\left\{ 1,\ldots,n\right\} }\prod_{s\in G}X_{s}.
\]
\end{defn}

For all $a_{1},\ldots,a_{n}$, the product $\prod_{j=1}^{n}\left(a_{j}+1\right)=\sum_{j=1}^{n}e_{j}\left(a_{1},\ldots,a_{n}\right)+1$,
and so 
\[
c=\sum_{j=1}^{n}e_{j}\left(a_{1},\ldots,a_{n}\right)\Longleftrightarrow\prod_{j=1}^{n}\left(a_{j}+1\right)=\left(c+1\right).
\]
More generally, for $l,k\neq0$, it is easily verified that 
\[
\prod_{j=1}^{n}\left(la_{j}+k\right)=lc+k\Longleftrightarrow c=\sum_{j=1}^{n}l^{j-1}k^{n-j}e_{j}\left(a_{1},\ldots,a_{n}\right)+\frac{k^{n}-k}{l}.
\]

Notice the fact that the function $F\left(a_{1},\ldots,a_{n}\right)=c$
where $c$ is the number as above, corresponds to a commutative and
associative operation. Note that the above number $c$ belongs to
$\mathbb{Z}$ for all $a_{1},\ldots,a_{n}\in\mathbb{Z}$ if and only
if $l$ divides $k\left(k-1\right)$.

The function $\mathfrak{G}_{l,k}\left(\cdot\right)$ in next definition
is precisely the same as the value of $c$ in Definition \ref{symm. 1}
and this gives a justification for our attention to this function.
\begin{defn}
\label{symm. 2} For $l,k\in\mathbb{Z}$ with $l,k\neq0$, the $\left(l,k\right)$-symmetric
polynomial in $n$ variables is: 
\[
\mathfrak{G}_{l,k}\left(X_{1},X_{2},\ldots,X_{n}\right)=\sum_{j=1}^{n}l^{j-1}k^{n-j}e_{j}\left(X_{1},X_{2},\ldots,X_{n}\right)+\frac{k^{n}-k}{l}
\]
\[
=\sum_{\emptyset\neq G\subseteq\left\{ 1,\ldots,n\right\} }\left(l^{\left|G\right|-1}k^{n-\left|G\right|}\cdot\prod_{s\in G}X_{s}\right)+\frac{k^{n}-k}{l}.
\]
\end{defn}

In the following definition, the function $\mathfrak{G}_{l,k}\left(\cdot\right)$
is extended to a sequence.
\begin{defn}
\label{seq. sum} Let $\left\langle x_{n}\right\rangle _{n=1}^{\infty}$
be an infinite sequence, and let $l,k\in\mathbb{Z}$ with $l,k\neq0$.
The corresponding $\left(l,k\right)$-symmetric system is the set:
\[
\mathfrak{G}_{l,k}\left(x_{n}\right)_{n=1}^{\infty}=\left\{ \mathfrak{G}_{l,k}\left(x_{n_{1}},x_{n_{2}},\ldots,x_{n_{s}}\right)\mid n_{1}<n_{2}<\cdots<n_{s}\right\} .
\]
For suitable $l$ and $k$, $\left(l,k\right)$-symmetric systems
are partition regular on $\mathbb{Z}$ and on $\mathbb{N}$. 
\end{defn}

For $l,k\in\mathbb{Z}$ where $l\neq0$ divides $k\left(k-1\right)$,
define: 
\[
a\varoast b=c\Longleftrightarrow\left(la+k\right)\left(lb+k\right)=\left(lc+k\right).
\]
So, 
\[
c=a\varoast b=\frac{1}{l}\left[\left(la+k\right)\left(lb+k\right)-k\right]=lab+k\left(a+b\right)+\frac{k^{2}-k}{l}.
\]
Clearly, $c\in\mathbb{Z}$ if and only if $l$ divides $k^{2}-k=k\left(k-1\right)$.

So, we have the following observations.
\begin{fact}
For integers $l\neq0$,
\begin{enumerate}
\item $a\varoast_{l,0}b=lab$;
\item $\mathfrak{G}_{l,0}\left(a_{1},a_{2},\ldots,a_{n}\right)=a_{1}\varoast_{l,0}\cdots\varoast_{l,0}a_{n}=l^{n-1}a_{1}\cdots a_{n}$;
\end{enumerate}
\end{fact}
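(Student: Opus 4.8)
The plan is to treat both parts as direct specializations of formulas already displayed in the excerpt, with no topological-algebraic machinery required. For part (1), I would start from the closed form $c = a \varoast b = lab + k(a+b) + \frac{k^{2}-k}{l}$ derived immediately before the Fact, and simply set $k = 0$. The two $k$-dependent summands $k(a+b)$ and $\frac{k^{2}-k}{l}$ both vanish, leaving $a \varoast_{l,0} b = lab$. I would also note in passing that the divisibility hypothesis ``$l$ divides $k(k-1)$'' that guarantees $\varoast_{l,k}$ maps $\mathbb{Z}$ into $\mathbb{Z}$ is automatically satisfied when $k = 0$, so the operation is genuinely well defined in this case.

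For part (2) there are two equalities to establish. The first, $a_1 \varoast_{l,0} \cdots \varoast_{l,0} a_n = l^{n-1} a_1 \cdots a_n$, I would prove by induction on $n$ using part (1): assuming $a_1 \varoast_{l,0} \cdots \varoast_{l,0} a_{n-1} = l^{n-2} a_1 \cdots a_{n-1}$, one further application of $\varoast_{l,0}$ against $a_n$ multiplies the accumulated value by $l a_n$, yielding $l^{n-1} a_1 \cdots a_n$. The second equality, $\mathfrak{G}_{l,0}(a_1, \ldots, a_n) = l^{n-1} a_1 \cdots a_n$, I would obtain by substituting $k = 0$ into Definition \ref{symm. 2}. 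In the sum $\sum_{j=1}^{n} l^{j-1} k^{n-j} e_j$ the factor $k^{n-j}$ annihilates every term except $j = n$ (where $k^{n-n} = k^{0} = 1$), while the correction term $\frac{k^{n}-k}{l}$ becomes $0$; since $e_n(a_1, \ldots, a_n) = a_1 a_2 \cdots a_n$ is exactly the full product of all $n$ variables, what survives is precisely $l^{n-1} a_1 \cdots a_n$.

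There is no genuine obstacle here: the entire content is bookkeeping about which terms persist under the specialization $k = 0$. The one point deserving care is the convention $k^{0} = 1$ in the surviving $j = n$ term, so that one does not inadvertently read $k^{n-j}$ as zero when $n = j$. As a consistency check I would verify that the two computations agree against the defining relation $\prod_{j=1}^{n}(la_j + k) = l\,\mathfrak{G}_{l,k}(a_1, \ldots, a_n) + k$: at $k = 0$ this reduces to $l^{n} a_1 \cdots a_n = l\,\mathfrak{G}_{l,0}(a_1, \ldots, a_n)$, which at once gives $\mathfrak{G}_{l,0}(a_1, \ldots, a_n) = l^{n-1} a_1 \cdots a_n$ and simultaneously confirms the iterated-operation formula, closing the chain of equalities in part (2).
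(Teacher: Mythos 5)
Your proposal is correct and matches the paper's intent exactly: the Fact is stated there as an immediate observation obtained by setting $k=0$ in the displayed closed form $a\varoast_{l,k}b=lab+k(a+b)+\frac{k^{2}-k}{l}$ and in Definition \ref{symm. 2}, which is precisely the specialization you carry out. Your added induction for the iterated product and the consistency check against $\prod_{j=1}^{n}(la_{j}+k)=l\,\mathfrak{G}_{l,k}(a_{1},\ldots,a_{n})+k$ are sound and fill in details the paper leaves implicit.
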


The iterated $\varoast_{l,k}$-products are exactly the function defined
in Definition \ref{symm. 2} and to verify, we recall the following
propositions from \cite{key-4}.
\begin{prop}
Let $l,k\in\mathbb{Z}$ be such that $l\neq0$ divides $k\left(k-1\right)$.
Then for all $a_{1},\ldots,a_{n}\in\mathbb{Z}$:
\begin{align*}
a_{1}\varoast_{l,k}\cdots\varoast_{l,k}a_{n} & =\sum_{\emptyset\neq G\subseteq\left\{ 1,\ldots,n\right\} }\left(l^{\left|G\right|-1}k^{n-\left|G\right|}\cdot\prod_{s\in G}a_{s}\right)+\frac{k^{n}-k}{l}\\
 & =\sum_{j=1}^{n}l^{j-1}k^{n-j}e_{j}\left(a_{1},a_{2},\ldots,a_{n}\right)+\frac{k^{n}-k}{l}\\
 & =\mathfrak{G}_{l,k}\left(a_{1},a_{2},\ldots,a_{n}\right).
\end{align*}
\end{prop}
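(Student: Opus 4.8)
The plan is to exploit the defining property of $\varoast_{l,k}$ directly: the map $\phi\colon\mathbb{Z}\to\mathbb{Z}$ given by $\phi(x)=lx+k$ converts $\varoast_{l,k}$ into ordinary multiplication. Indeed, the definition $a\varoast_{l,k}b=c\iff(la+k)(lb+k)=(lc+k)$ says exactly that $\phi(a\varoast_{l,k}b)=\phi(a)\phi(b)$. So I would reduce the whole computation to the multiplicative side, where the desired closed form has already been recorded in the excerpt just before Definition \ref{symm. 2}. The hypothesis that $l\neq0$ divides $k(k-1)$ is precisely what guarantees $a\varoast_{l,k}b\in\mathbb{Z}$, so that $\phi$ is a genuine self-map of $\mathbb{Z}$ and all iterated products remain integral.

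First I would prove, by induction on $n$, that $\phi(a_1\varoast_{l,k}\cdots\varoast_{l,k}a_n)=\prod_{j=1}^{n}(la_j+k)$. The base case $n=1$ is just $\phi(a_1)=la_1+k$. For the inductive step I would write $a_1\varoast_{l,k}\cdots\varoast_{l,k}a_n=(a_1\varoast_{l,k}\cdots\varoast_{l,k}a_{n-1})\varoast_{l,k}a_n$ — legitimate since $\varoast_{l,k}$ is associative, so the iterated product does not depend on bracketing — and then apply the homomorphism identity $\phi(u\varoast_{l,k}v)=\phi(u)\phi(v)$ together with the inductive hypothesis. The integrality of every intermediate product, again guaranteed by $l\mid k(k-1)$, keeps the induction entirely inside $\mathbb{Z}$.

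Next I would invoke the algebraic equivalence already established in the excerpt, namely that $\prod_{j=1}^{n}(la_j+k)=lc+k$ holds if and only if $c=\sum_{j=1}^{n}l^{j-1}k^{n-j}e_j(a_1,\ldots,a_n)+\frac{k^n-k}{l}$. Setting $c=a_1\varoast_{l,k}\cdots\varoast_{l,k}a_n$, the previous step gives $\phi(c)=lc+k=\prod_{j=1}^{n}(la_j+k)$, so this equivalence reads off the value of $c$ as $\sum_{j=1}^{n}l^{j-1}k^{n-j}e_j(a_1,\ldots,a_n)+\frac{k^n-k}{l}$, which is the middle expression in the statement. The first displayed expression then follows by regrouping the sum according to the cardinality $|G|=j$, which is just the definition $e_j=\sum_{|G|=j}\prod_{s\in G}X_s$, and the final equality is the definition of $\mathfrak{G}_{l,k}$.

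I do not expect a genuine obstacle here: the substance of the argument is entirely the passage to the multiplicative picture via $\phi$, after which a one-line induction and an appeal to the pre-recorded identity finish everything. The only two points that need care are citing the associativity of $\varoast_{l,k}$ from \cite{key-4} so that the iterated product is well defined, and using $l\mid k(k-1)$ to keep every intermediate quantity integral. Once these are in place the proof is essentially bookkeeping.
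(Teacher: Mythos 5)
Your proposal is correct, and it follows exactly the route the paper intends: the paper states this proposition without proof (recalling it from \cite{key-4}), but its surrounding discussion already sets up precisely your argument, namely that $\varoast_{l,k}$ is defined by conjugating multiplication through the affine map $x\mapsto lx+k$ and that $\prod_{j=1}^{n}(la_{j}+k)=lc+k$ is equivalent to the displayed formula for $c$. Your induction on $n$ plus the injectivity of $x\mapsto lx+k$ (using $l\neq0$) and the integrality remark via $l\mid k(k-1)$ supply exactly the bookkeeping the paper leaves implicit.
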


The following property of the function $\mathfrak{G}_{l,k}\left(\cdot\right)$
are useful for us.
\begin{fact}
\textbf{Some observation about $\mathfrak{G}_{l,k}\left(\cdot\right)$:}

1. For any permutation $f$ of $\left\{ a_{1},a_{2},\ldots,a_{n}\right\} $,
as $\mathfrak{G}_{l,k}\left(\cdot\right)$ is a symmetric function,
$\mathfrak{G}_{l,k}\left(a_{1},a_{2},\ldots,a_{n}\right)=\mathfrak{G}_{l,k}\left(a_{f\left(1\right)},a_{f\left(2\right)},\ldots,a_{f\left(n\right)}\right)$.
\begin{proof}
From the definition of \textbf{$\mathfrak{G}_{l,k}\left(\cdot\right)$},
we have
\begin{align*}
\mathfrak{G}_{l,k}\left(a_{1},a_{2},\ldots,a_{n}\right) & =\sum_{j=1}^{n}l^{j-1}k^{n-j}e_{j}\left(a_{1},a_{2},\ldots,a_{n}\right)+\frac{k^{n}-k}{l}\\
 & =\sum_{j=1}^{n}l^{j-1}k^{n-j}e_{j}\left(a_{f\left(1\right)},a_{f\left(2\right)},\ldots,a_{f\left(n\right)}\right)+\frac{k^{n}-k}{l}\\
 & =\mathfrak{G}_{l,k}\left(a_{f\left(1\right)},a_{f\left(2\right)},\ldots,a_{f\left(n\right)}\right).
\end{align*}
\end{proof}
2. $\mathfrak{G}_{l,k}\left(a,b\right)\varoast_{l,k}\mathfrak{G}_{l,k}\left(c,d,e\right)=a\varoast_{l,k}b\varoast_{l,k}c\varoast_{l,k}d\varoast_{l,k}e=\mathfrak{G}_{l,k}\left(a,b,c,d,e\right)$.
\end{fact}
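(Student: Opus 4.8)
The plan is to reduce everything to ordinary integer multiplication through the map $\phi:\mathbb{Z}\to\mathbb{Z}$ given by $\phi(x)=lx+k$. The operation $\varoast_{l,k}$ was defined precisely so that $\phi(a\varoast_{l,k}b)=(la+k)(lb+k)=\phi(a)\cdot\phi(b)$; that is, $\phi$ turns $\varoast_{l,k}$ into the usual product. Since $l\neq0$, the map $\phi$ is injective, so two integers coincide as soon as their images under $\phi$ do. This injectivity is the lever I would use throughout.

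First I would invoke the Proposition above to rewrite each factor as an iterated $\varoast_{l,k}$-product: $\mathfrak{G}_{l,k}(a,b)=a\varoast_{l,k}b$, $\mathfrak{G}_{l,k}(c,d,e)=c\varoast_{l,k}d\varoast_{l,k}e$, and $\mathfrak{G}_{l,k}(a,b,c,d,e)=a\varoast_{l,k}b\varoast_{l,k}c\varoast_{l,k}d\varoast_{l,k}e$. Applying $\phi$ and using the homomorphism property repeatedly (an easy induction on the number of factors), I get $\phi(\mathfrak{G}_{l,k}(a,b))=(la+k)(lb+k)$ and $\phi(\mathfrak{G}_{l,k}(c,d,e))=(lc+k)(ld+k)(le+k)$, and likewise the five-variable version has image equal to the product of all five factors $(lx+k)$.

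Next I would compute $\phi$ of the left-hand side: $\phi\big(\mathfrak{G}_{l,k}(a,b)\varoast_{l,k}\mathfrak{G}_{l,k}(c,d,e)\big)=\phi(\mathfrak{G}_{l,k}(a,b))\cdot\phi(\mathfrak{G}_{l,k}(c,d,e))=(la+k)(lb+k)(lc+k)(ld+k)(le+k)$, which is exactly $\phi(\mathfrak{G}_{l,k}(a,b,c,d,e))$. The only thing used here is commutativity and associativity of multiplication in $\mathbb{Z}$. Since $\phi$ is injective, the two preimages agree, giving the outer equality of the statement. The middle expression $a\varoast_{l,k}b\varoast_{l,k}c\varoast_{l,k}d\varoast_{l,k}e$ equals the right-hand side again by the Proposition, and equals the left-hand side by associativity of $\varoast_{l,k}$, grouping the first two and the last three factors.

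There is essentially no hard step here: the real content has already been packaged into the Proposition (iterated products equal $\mathfrak{G}_{l,k}$) and into the defining property of $\phi$. The only points requiring a word of care are that $\phi$ is injective, which needs $l\neq0$, and that all intermediate values stay in $\mathbb{Z}$, which is guaranteed by the standing hypothesis $l\mid k(k-1)$. If one wished to avoid even citing associativity of $\varoast_{l,k}$, the $\phi$-computation above reproves it for this particular grouping for free.
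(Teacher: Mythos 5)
Your argument is correct, but it takes a different route from the paper and covers a different portion of the statement. The paper only supplies a proof for part~1, and that proof works directly with the defining formula: it writes $\mathfrak{G}_{l,k}$ as $\sum_{j=1}^{n}l^{j-1}k^{n-j}e_{j}(a_{1},\ldots,a_{n})+\frac{k^{n}-k}{l}$ and observes that each elementary symmetric polynomial $e_{j}$ is invariant under permutations; part~2 is asserted without proof. You instead prove part~2 by conjugating everything through the affine map $\phi(x)=lx+k$, using $\phi(a\varoast_{l,k}b)=\phi(a)\phi(b)$, the Proposition identifying iterated $\varoast_{l,k}$-products with $\mathfrak{G}_{l,k}$, and injectivity of $\phi$ (valid since $l\neq0$). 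That computation is sound, and it is in the spirit of how the paper motivates $\varoast_{l,k}$ in the first place (lifting multiplication on the affine image). What your approach buys is uniformity: the same identity $\phi(\mathfrak{G}_{l,k}(a_{1},\ldots,a_{n}))=\prod_{i}(la_{i}+k)$ makes part~1 immediate as well, since the right-hand side is visibly permutation-invariant, and it even reproves associativity of $\varoast_{l,k}$ for the grouping at hand. The one thing to fix is that you never explicitly state this last step for part~1 — as written your proposal addresses only part~2 of the Fact — so you should add the one sentence deducing symmetry from the symmetry of $\prod_{i}(la_{i}+k)$ and injectivity of $\phi$, or else fall back on the paper's direct appeal to the symmetry of the $e_{j}$.
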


Now, we define some notations we will use.
\begin{defn}
Let $\left\langle a_{n}\right\rangle _{n=1}^{\infty}$ be an injective
sequence in $\mathbb{Z}$, and $\left\langle H_{n}\right\rangle _{n=1}^{\infty}$
be a sequence in $\mathcal{P}_{f}\left(\mathbb{N}\right)$, also let
$B=\left\{ x_{n}:n\in\mathbb{N}\right\} $ then we have the followings.
\begin{enumerate}
\item For $\alpha=\left\{ i_{1},i_{2},\ldots,i_{m}\right\} \subseteq\mathbb{N}$
satisfying $i_{1}<i_{2}<\cdots<i_{m}$. Then, 
\[
a_{\alpha}^{<}=\left\{ a_{i_{1}},a_{i_{2}},\ldots,a_{i_{m}}\right\} \text{ and }a_{\alpha}=\sum_{i\in\alpha}a_{i}.
\]
\item For $\alpha\in\mathcal{P}_{f}\left(\mathbb{N}\right)$ we have $H_{\alpha}=\bigcup_{i\in\alpha}H_{i}$.
\item Let $\text{FS}\left\langle y_{n}\right\rangle _{n=1}^{\infty}$ be
a sub-$IP$ set of $\text{FS}\left\langle x_{n}\right\rangle _{n=1}^{\infty}$.
Then, for any given $\beta=\left\{ i_{1},i_{2},\ldots,i_{p}\right\} \in\mathcal{P}_{f}\left(\mathbb{N}\right)$,
we define 
\[
y_{\beta}^{<\left(B\right)}=\left(x_{i_{1}},x_{i_{2}},\ldots,x_{i_{p}}\right)\,\text{and}\,y_{\beta}=\sum_{j=1}^{p}x_{i_{j}}.
\]
 We call the sequence $B$ is a \textit{base sequence.}
\end{enumerate}
\end{defn}

\noindent The following remark is a useful accessory to our work.
\begin{rem}
\label{inv.} For any $a,b,d\in\mathbb{Z}$ and for $i\in\mathbb{N}$,
\[
b\varoast_{l,k}\left(a+id\right)
\]

\[
\qquad\qquad\qquad\qquad=\frac{1}{l}\left[\left(lb+k\right)\left(l\left(a+id\right)+k\right)-k\right]
\]

\[
\qquad\qquad\qquad\qquad\quad\qquad\qquad=l\left(ab+bk+ak\right)+\frac{k^{2}-k}{l}+i\left(l^{2}bd+kld\right)
\]

\[
\!\!\!\!\!\!\!\!\!\!\!\!=y+iz,
\]
where $y=l\left(ab+bk+ak\right)+\frac{k^{2}-k}{l}$, and $z=l^{2}bd+kld$.
\end{rem}

\section{Some applications of Hales-Jewett Theorem}

In the first subsection, we will study two variations of geo-arithmetic
progressions and later we will show some additively rich structures
in the piecewise syndetic sets of $\left(\mathbb{Z},\varoast_{l,k}\right)$.

\subsection{ Two variants of Geo-arithmetic progressions}

In \cite{key-13}, the authors have proved that for any finite partition
of $\mathbb{Z}$, there exists a cell which contains geo-arithmetic
progressions. Formally,
\begin{thm}
If $n,r\in\mathbb{N}$ and $\mathbb{Z}=C_{1}\cup C_{2}\cup\ldots\cup C_{r},$
then there exists $k\in\left\{ 1,2,\ldots,r\right\} $ and $a,b,d\in\mathbb{Z}$,
such that 
\[
\left\{ b\cdot\left(a+i\cdot d\right)^{j}:0\leq i,j\leq n\right\} \subset C_{k}.
\]
\end{thm}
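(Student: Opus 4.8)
The plan is to deduce this from Beiglb\"ock's variant of the Hales--Jewett theorem (Theorem \ref{bhj}), taking the partition regular family $\mathcal{F}$ to be the family of arithmetic progressions of length $n+1$; this is partition regular precisely by van der Waerden's Theorem and, since $n\geq1$, contains no singletons. The guiding idea is that a \emph{multiplicative} encoding of located words turns the ``letter sweep'' $s\in\mathbb{A}$ of Beiglb\"ock's configuration into a geometric (exponential) direction, while the ``set sweep'' $t\in F$ over a member of $\mathcal{F}$ becomes the arithmetic base of the desired geo-arithmetic progression.

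Concretely, first I would fix the alphabet $\mathbb{A}=\{0,1,\ldots,n\}$ and define $\psi:L(\mathbb{A})\to\mathbb{N}$ by $\psi(w)=\prod_{p\in\mathrm{Dom}(w)}p^{\,w(p)}$, reading each position $p\in\mathbb{N}$ as a multiplier and each letter $w(p)$ as its exponent. Given the coloring $\mathbb{Z}=C_{1}\cup\cdots\cup C_{r}$, I would pull it back to a finite coloring of $L(\mathbb{A})$ by giving a located word $w$ the color of $\psi(w)\in\mathbb{N}\subseteq\mathbb{Z}$. Applying Theorem \ref{bhj} to this coloring of $L(\mathbb{A})$ then yields $\alpha\in L(\mathbb{A})$, $\gamma\in\mathcal{P}_{f}(\mathbb{N})$ and an arithmetic progression $F=\{u+iv:0\le i\le n\}\in\mathcal{F}$, with $\mathrm{Dom}(\alpha),\gamma,F$ pairwise disjoint, such that $\{\alpha\cup(\gamma\cup\{t\})\times\{s\}:s\in\mathbb{A},\,t\in F\}$ is monochromatic.

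The key computation is to evaluate $\psi$ on these words. Writing $A=\psi(\alpha)$ and $G=\prod_{p\in\gamma}p$, and using $t\notin\gamma\cup\mathrm{Dom}(\alpha)$ so that the product factorizes cleanly, one obtains
\[
\psi\bigl(\alpha\cup(\gamma\cup\{t\})\times\{s\}\bigr)=A\cdot\bigl(G\cdot t\bigr)^{s}.
\]
Since $F$ is an arithmetic progression, $\{G\cdot t:t\in F\}=\{Gu+i\,(Gv):0\le i\le n\}$ is again an arithmetic progression; setting $b=A$, $a=Gu$, $d=Gv$ and $j=s$ identifies the monochromatic Beiglb\"ock configuration with exactly $\{b\,(a+i\,d)^{j}:0\le i,j\le n\}$, which therefore lies in a single color class $C_{k}$. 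Note that the degenerate case $j=s=0$ gives the value $b$ for every $i$, matching $b\,(a+id)^{0}=b$.

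The main obstacle --- and the step that dictates every other choice --- is engineering $\psi$ and $\mathcal{F}$ so that the two independent sweeps of Theorem \ref{bhj} align with the two directions of a geo-arithmetic progression: the multiplicative encoding must carry the full letter sweep to the exponent $j$ and a member of $\mathcal{F}$ to the arithmetic base $a+id$. Once $\psi$ and $\mathcal{F}=\{\text{APs of length }n+1\}$ are fixed, what remains is genuinely routine: the disjointness clause of Theorem \ref{bhj} is exactly what makes the factorization above valid, and it only remains to observe that a scalar multiple of an arithmetic progression is again an arithmetic progression. No distinctness of the resulting points is required, since the statement only asks for containment in one $C_{k}$.
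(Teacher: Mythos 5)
Your proposal is correct and is essentially the argument the paper itself uses: the paper states this result as a quotation from [BBHS] but recovers it as the $(l,k)=(1,0)$ case of Theorem \ref{1geo}, whose proof applies Theorem \ref{bhj} with $\mathcal{F}$ the family of arithmetic progressions and the same multiplicative encoding $f(\alpha)=\prod_{t\in\mathrm{Dom}(\alpha)}t^{(\alpha(t))}$ (with $\varoast_{l,k}$-powers in place of ordinary powers). Your computation $\psi(\alpha\cup(\gamma\cup\{t\})\times\{s\})=A\cdot(Gt)^{s}$ and the observation that $\{Gt:t\in F\}$ is again an arithmetic progression match the paper's steps exactly, specialized to ordinary multiplication.
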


Now using the Theorem \ref{bhj}, we will deduce two versions of geo-arithmetic
progressions. To do so, we will use the van der Waerden's theorem
\cite{key-8} in our first proof and its variant for symmetric polynomials
\cite[Theorem 2.9]{key-4} in our second proof. Let us now explore
the symmetric version of monochromatic geo-arithmatic preogressions
in $\mathbb{Z}$, which involves the addition ``$+$'' and ``$\varoast_{l,k}$''
operations on $\mathbb{Z}$.
\begin{thm}
\label{1geo} Let $l,k\in\mathbb{Z}$ and $\mathbb{Z}=\bigcup_{i=1}^{r}C_{i}$
be any partition of $\mathbb{Z}$. Then for each $m\in\mathbb{N}$,
there exist $x,y,z\in\mathbb{Z}$ such that the following configuration
\[
\left\{ \frac{1}{l}\left[\left(lx+k\right)\left(l\left(y+iz\right)+k\right)^{j}-k\right]:i,j\in\left\{ 0,1,\ldots,m\right\} \right\} ,
\]
 is monochromatic. 
\end{thm}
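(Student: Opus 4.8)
The plan is to pull the coloring of $\mathbb{Z}$ back to the space of located words and then invoke Beiglb\"ock's variant of the Hales--Jewett theorem (Theorem~\ref{bhj}), taking for $\mathcal{F}$ the family of $(m+1)$-term arithmetic progressions in $\mathbb{N}$. Van der Waerden's Theorem is precisely what certifies that this $\mathcal{F}$ is partition regular, and (for $m\geq 1$) it obviously contains no singletons, so the hypotheses of Theorem~\ref{bhj} are met. Throughout I use the standing hypothesis on $\varoast_{l,k}$, namely $l\neq 0$ and $l\mid k(k-1)$, which makes $\phi(a)=la+k$ an isomorphism of $(\mathbb{Z},\varoast_{l,k})$ onto $l\mathbb{Z}+k$ and makes $l\mathbb{Z}+k$ closed under ordinary multiplication; the case $m=0$ is trivial since then the configuration is the singleton $\{x\}$.

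First I would fix the coloring $\chi\colon\mathbb{Z}\to\{1,\dots,r\}$ induced by the partition, choose an integer $c_0$ with $\phi(c_0)\neq 0$, and define a map $\Psi$ from located words over the alphabet $\mathbb{A}=\{0,1,\dots,m\}$ into $l\mathbb{Z}+k$ by
\[
\Psi(\beta)=\phi(c_0)\cdot\prod_{n\in\mathrm{Dom}(\beta)}\phi(n)^{\beta(n)}.
\]
Since each factor $\phi(n)=ln+k$ lies in the multiplicatively closed set $l\mathbb{Z}+k$, so does $\Psi(\beta)$, whence $\phi^{-1}(\Psi(\beta))\in\mathbb{Z}$ is well defined; the role of the offset $\phi(c_0)$ is only to keep the product inside $l\mathbb{Z}+k$ even when $\beta$ takes the value $0$. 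I then colour located words by $\widetilde{\chi}(\beta)=\chi\big(\phi^{-1}(\Psi(\beta))\big)$.

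Applying Theorem~\ref{bhj} to $\widetilde{\chi}$ yields $\alpha\in L(\mathbb{A})$, a finite set $\gamma$, and an arithmetic progression $F=\{a_0+id:0\le i\le m\}$, pairwise disjoint, so that $\{\alpha\cup(\gamma\cup\{t\})\times\{s\}:s\in\mathbb{A},\ t\in F\}$ is $\widetilde{\chi}$-monochromatic. Because every position of the block $\gamma\cup\{t\}$ receives the single symbol $s$, the product defining $\Psi$ collapses to
\[
\Psi\big(\alpha\cup(\gamma\cup\{t\})\times\{s\}\big)=u\cdot\big(P\cdot\phi(t)\big)^{s},
\]
where $u=\phi(c_0)\prod_{n\in\mathrm{Dom}(\alpha)}\phi(n)^{\alpha(n)}$ and $P=\prod_{n\in\gamma}\phi(n)$ are fixed. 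Writing $u=\phi(x)$ and observing that, as $t=a_0+id$ runs through $F$,
\[
P\cdot\phi(t)=P(la_0+k)+i\,Pld
\]
is an arithmetic progression of the shape $\phi(y+iz)$ with $z=Pd$ and $y=\tfrac1l\big(P(la_0+k)-k\big)$, the monochromatic values become $\phi(x)\,\phi(y+iz)^{s}=(lx+k)\,(l(y+iz)+k)^{s}$. Setting $j=s$ and applying $\phi^{-1}$ shows that
\[
\tfrac1l\big[(lx+k)(l(y+iz)+k)^{j}-k\big],\qquad 0\le i,j\le m,
\]
all carry the same $\chi$-colour, which is exactly the asserted monochromatic configuration.

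The only genuinely arithmetic point, and the step I expect to require care, is that $x,y,z$ are integers. For $z=Pd$ this is immediate, and $u=\phi(x)\in l\mathbb{Z}+k$ forces $x\in\mathbb{Z}$. For $y$ I would reduce modulo $l$: since $P=\prod_{n\in\gamma}(ln+k)\equiv k^{|\gamma|}\pmod l$, we get $P(la_0+k)-k\equiv k^{|\gamma|+1}-k=k(k-1)\big(k^{|\gamma|-1}+\dots+1\big)\pmod l$, which is divisible by $k(k-1)$ and hence by $l$, so $y\in\mathbb{Z}$. Apart from this divisibility check, the remaining work is the bookkeeping that correctly matches the two free parameters supplied by Theorem~\ref{bhj} to the two degrees of freedom of the target: the symbol $s$ plays the role of the geometric exponent $j$, while the progression parameter $t=a_0+id$ plays the role of the arithmetic index $i$.
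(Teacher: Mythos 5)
Your proposal is correct and follows essentially the same route as the paper: both apply Beiglb\"ock's Theorem~\ref{bhj} with $\mathcal{F}$ the family of arithmetic progressions, the alphabet $\{0,1,\ldots,m\}$, and the colouring of located words by the iterated $\varoast_{l,k}$-product of positions raised to their letter values, then collapse the monochromatic pattern via the linearity of $b\varoast_{l,k}(a+id)$ in $i$ (the paper's Remark~\ref{inv.}). Your version, phrased through the isomorphism $\phi(a)=la+k$, additionally supplies the offset $\phi(c_0)$ and the explicit divisibility check for $y\in\mathbb{Z}$, which the paper leaves implicit.
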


\begin{proof}
Let $\mathcal{F}=\left\{ \left\{ a,a+d,\ldots,a+kd\right\} :a,d\in\mathbb{N}\right\} $
be the set of all $\left(k+1\right)$-term arithmetic progressions.
Clearly $\mathcal{F}$ is a partition regular family over $\mathbb{Z}$.
Take $\mathbb{A}=\left\{ 0,1,\ldots,m\right\} $ and define $f:L\left(\mathbb{A}\right)\rightarrow\mathbb{N}$
by $f\left(\alpha\right)=\prod_{t\in\text{Dom}\left(\alpha\right)}t^{\left(\alpha\left(t\right)\right)}$.
Color each $\alpha\in L\left(\mathbb{A}\right)$ with the color of
$f\left(\alpha\right)$. 

Now, choose $\alpha,\gamma\in L\left(\mathbb{A}\right)$ and $F\in\mathcal{F}$
as in the Theorem \ref{bhj}. Then for all $i,j\in\left\{ 0,1,\ldots,k\right\} $,
the following configuration is monochromatic.

$f\left(\alpha\cup\left(\gamma\cup\left\{ a+id\right\} \right)\times\left\{ j\right\} \right)$

$=\prod_{t\in\text{Dom}\left(\alpha\right)}t^{\left(\alpha\left(t\right)\right)}\varoast_{l,k}\prod_{t\in\gamma}t^{\left(j\right)}\varoast_{l,k}\left(a+id\right)^{\left(j\right)}$

$=x\varoast_{l,k}b^{\left(j\right)}\varoast_{l,k}\left(a+id\right)^{\left(j\right)}$,
where $b=\prod_{t\in\gamma}t$.

$=x\varoast_{l,k}\left(b\varoast_{l,k}\left(a+id\right)\right)^{\left(j\right)}$

$=x\varoast_{l,k}\left(y+iz\right)^{\left(j\right)}$, for some $y,z\in\mathbb{Z}$
by Remark \ref{inv.}.

Now $c=x\varoast_{l,k}\left(y+iz\right)^{\left(j\right)}\iff c=\frac{1}{l}\left[\left(lx+k\right)\left(l\left(y+iz\right)+k\right)^{j}-k\right].$
\end{proof}
The above version has it's own interest. Suppose $\left(l,k\right)=\left(1,0\right)$,
then theorem \ref{1geo} gives the monochromaticity of the ordinary
geo-arthmetic progression on $\mathbb{Z}$ as a special case. The following
version of geo-arithmatic progressions in $\mathbb{Z}$ involves only
the ``$\varoast_{l,k}$'' operations.
\begin{thm}
Let $l,k\in\mathbb{Z}$ and $\mathbb{Z}=\bigcup_{i=1}^{r}C_{i}$ be
any partition of $\mathbb{Z}$. Then for each $m\in\mathbb{N}$, there
exist $x,y,z\in\mathbb{Z}$ such that the following configuration
\[
\left\{ \frac{1}{l}\left[\left(lx+k\right)\left(ly+k\right)^{j}\left(lz+k\right)^{ij}-k\right]:i,j\in\left\{ 0,1,\ldots,m\right\} \right\} ,
\]
 is monochromatic. 
\end{thm}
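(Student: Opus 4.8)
The plan is to follow the proof of Theorem~\ref{1geo} verbatim in its outer structure --- color located words through the map $f(\alpha)=\prod_{t\in\text{Dom}(\alpha)}t^{(\alpha(t))}$ (products and powers taken in $(\mathbb{Z},\varoast_{l,k})$) and invoke Beiglb\"ock's theorem (Theorem~\ref{bhj}) --- and to make a single substantive change: feed into Theorem~\ref{bhj} the family of $\varoast_{l,k}$-\emph{geometric} progressions in place of the family of arithmetic progressions. The product exponent $ij$ in the target configuration will then appear automatically from the identity $(y\varoast_{l,k}z^{(i)})^{(j)}=y^{(j)}\varoast_{l,k}z^{(ij)}$, which is precisely the algebraic reason the second variant differs from the first.

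Concretely, I would set $\mathbb{A}=\{0,1,\ldots,m\}$, keep the coloring $f:L(\mathbb{A})\to\mathbb{Z}$ above (each located word receives the $C_i$-class of its $f$-value), and take the family
\[
\mathcal{F}=\left\{\{c\varoast_{l,k}z^{(i)}:0\le i\le m\}:c,z\in\mathbb{N},\ \text{the}\ m+1\ \text{terms pairwise distinct}\right\}.
\]
That $\mathcal{F}$ is partition regular over $\mathbb{N}$ and contains no singletons is exactly the content of the symmetric van der Waerden theorem \cite[Theorem~2.9]{key-4} with $L=m$: any finite coloring of $\mathbb{N}$ yields $a,b$ with $a,b,a\varoast_{l,k}b,\ldots,a\varoast_{l,k}b^{(m)}$ monochromatic and pairwise distinct, so $\{a\varoast_{l,k}b^{(i)}:0\le i\le m\}\in\mathcal{F}$ is a monochromatic member, while $m\ge 1$ together with pairwise distinctness forbids singletons. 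Here I use the $\mathbb{N}$-version of that theorem, valid for $l>0$ dividing $k-1$; this divisibility also makes the identity element $\tfrac{1-k}{l}$ an integer, which covers the $s=0$ case.

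Applying Theorem~\ref{bhj} then produces $\alpha\in L(\mathbb{A})$, $\gamma\in\mathcal{P}_f(\mathbb{N})$ and $F=\{c\varoast_{l,k}z^{(i)}:0\le i\le m\}\in\mathcal{F}$, pairwise disjoint, with $\{f(\alpha\cup(\gamma\cup\{t\})\times\{s\}):s\in\mathbb{A},\,t\in F\}$ monochromatic. Writing $x=f(\alpha)$ and $b=\prod_{u\in\gamma}u$ (a $\varoast_{l,k}$-product), splitting $f$ over the disjoint domains and using $b^{(s)}\varoast_{l,k}t^{(s)}=(b\varoast_{l,k}t)^{(s)}$ gives, for $t=c\varoast_{l,k}z^{(i)}$ and $s=j$,
\[
f(\alpha\cup(\gamma\cup\{t\})\times\{j\})=x\varoast_{l,k}(b\varoast_{l,k}t)^{(j)}=x\varoast_{l,k}\bigl(y\varoast_{l,k}z^{(i)}\bigr)^{(j)},
\]
where $y=b\varoast_{l,k}c$. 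Finally the identity $(y\varoast_{l,k}z^{(i)})^{(j)}=y^{(j)}\varoast_{l,k}z^{(ij)}$ --- immediate from the isomorphism $a\mapsto la+k$ taking $\varoast_{l,k}$ to ordinary multiplication --- rewrites this value as $\tfrac1l[(lx+k)(ly+k)^j(lz+k)^{ij}-k]$, and letting $i,j$ run over $\{0,1,\ldots,m\}$ yields the claimed monochromatic configuration.

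I expect the only real difficulty to be conceptual: seeing that the product exponent $ij$ is precisely what a $\varoast_{l,k}$-geometric progression contributes when its terms are used as located-word positions, and that the correct partition-regularity input is the symmetric van der Waerden theorem rather than ordinary van der Waerden. A secondary point requiring care is that the geometric-progression terms serving as positions must be positive, pairwise distinct natural numbers disjoint from $\text{Dom}(\alpha)$ and $\gamma$; the disjointness is handled by Theorem~\ref{bhj} itself, while positivity and distinctness come from the $\mathbb{N}$-version of \cite[Theorem~2.9]{key-4}. Everything else is the routine bookkeeping already carried out in the proof of Theorem~\ref{1geo}.
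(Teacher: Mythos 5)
Your proposal is correct and follows essentially the same route as the paper: the same family of $\varoast_{l,k}$-geometric progressions $\{a\varoast_{l,k}d^{(i)}:0\le i\le m\}$ (whose partition regularity the paper likewise draws from the symmetric van der Waerden theorem of \cite{key-4}), the same located-word coloring $f(\alpha)=\prod_{t\in\text{Dom}(\alpha)}t^{(\alpha(t))}$, the same appeal to Theorem \ref{bhj}, and the same final identity $x\varoast_{l,k}(y\varoast_{l,k}z^{(i)})^{(j)}=\tfrac{1}{l}[(lx+k)(ly+k)^{j}(lz+k)^{ij}-k]$. Your added remarks on distinctness, positivity, and the divisibility hypothesis are more careful than the paper's write-up but do not change the argument.
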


\begin{proof}
Consider $\mathcal{F}=\left\{ \left\{ a,a\varoast_{l,k}d,\ldots,a\varoast_{l,k}d^{\left(k\right)}\right\} :a,d\in\mathbb{N}\right\} $.
Clearly $\mathcal{F}$ is a partition regular family over $\mathbb{Z}$.
Take $\mathbb{A}=\left\{ 0,1,\ldots,m\right\} $ and define $f:L\left(\mathbb{A}\right)\rightarrow\mathbb{N}$
by $f\left(\alpha\right)=\prod_{t\in\text{Dom}\left(\alpha\right)}t^{\left(\alpha\left(t\right)\right)}$.

Now, choose $\alpha,\gamma\in L\left(\mathbb{A}\right)$ and $F\in\mathcal{F}$
as in the Theorem \ref{bhj}. Then for all $i,j\in\left\{ 0,1,\ldots,k\right\} $,
the following configuration is monochromatic.

$f\left(\alpha\cup\left(\gamma\cup\left\{ a\varoast_{l,k}d^{\left(i\right)}\right\} \right)\times\left\{ j\right\} \right)$

$=\prod_{t\in\text{Dom}\left(\alpha\right)}t^{\left(\alpha\left(t\right)\right)}\varoast_{l,k}\prod_{t\in\gamma}t^{\left(j\right)}\varoast_{l,k}\left(a\varoast_{l,k}d^{\left(i\right)}\right)^{\left(j\right)}$

$=x\varoast_{l,k}c^{\left(j\right)}\varoast_{l,k}\left(a\varoast_{l,k}d^{\left(i\right)}\right)^{\left(j\right)}$,
where $c=\prod_{t\in\gamma}t$.

$=x\varoast_{l,k}\left(a\varoast_{l,k}c\varoast_{l,k}d^{\left(i\right)}\right)^{\left(j\right)}$

$=x\varoast_{l,k}\left(b\varoast_{l,k}d^{\left(i\right)}\right)^{\left(j\right)}$,
where $b=a\varoast_{l,k}c$.

Replacing $b$ by $y$ and $d$ by $z$, we get,

$c=x\varoast_{l,k}\left(b\varoast_{l,k}d^{\left(i\right)}\right)^{\left(j\right)}\iff c=\frac{1}{l}\left[\left(lx+k\right)\left(ly+k\right)^{j}\left(lz+k\right)^{ij}-k\right]$.
\end{proof}

\subsection{ Additive structure in $\left(\mathbb{N},\varoast_{l,k}\right)$}

Note that any piecewise syndetic set in $\left(\mathbb{N},+\right)$
contains arithmetic progressions of arbitrary length. In fact it contains
a more general structure, the generalized arithmetic progressions.
Now we will study whether a piecewise syndetic set in $\left(\mathbb{N},\varoast_{l,k}\right)$
contains arithmetic progressions of $\left(\mathbb{N},+\right)$ of
arbitrary length or not. Note that, for $m,n,r\in\mathbb{N}$ if we
take any $r$-partition of $\mathbb{N}$, we have a monochromatic configuration
of the form 
\[
\left\{ a_{0}+i_{1}a_{1}+\cdots+i_{m}a_{m}:i_{1},\ldots,i_{m}\in\left\{ 0,1,\ldots,n\right\} \right\} 
\]
 called generalized arithmetic progression of length $n$, order $m$.

Now, for any $x\in\mathbb{N}$,

$x\varoast_{l,k}\left(a_{0}+i_{1}a_{1}+\cdots+i_{m}a_{m}\right)$

$=\frac{1}{l}\left[\left(lx+k\right)\left(l\left(a_{0}+i_{1}a_{1}+\cdots+i_{m}a_{m}\right)+k\right)-k\right]$

$=la_{0}x+kx+ka_{0}+\frac{k^{2}-k}{l}+\sum_{j=1}^{m}i_{j}\left(la_{j}x+ka_{j}\right)$

$=P_{0}+i_{1}P_{1}+i_{2}P_{2}+\cdots+i_{m}P_{m}$(say).

Hence the family of generalized arithmetic progression of length $n$,
order $m$ is an invariant partition regular family of $\left(\mathbb{N},\varoast_{l,k}\right)$.
So, from \cite[Lemma 1]{key-12}, we have the following result;
\begin{thm}
Let $A\subseteq\left(\mathbb{N},\varoast_{l,k}\right)$ be a piecewise
syndetic set and $m,n\in\mathbb{N}$. Then there exist $a_{0},a_{1},\ldots,a_{m}\in\mathbb{N}$
such that 
\[
\left\{ a_{0}+i_{1}a_{1}+\cdots+i_{m}a_{m}:i_{1},\ldots,i_{m}\in\left\{ 0,1,\ldots,n\right\} \right\} \subseteq A.
\]
\end{thm}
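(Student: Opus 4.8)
The plan is to recognize the desired configuration as a member of a single partition regular family and then to invoke the structural principle that piecewise syndetic sets in a semigroup contain a member of every \emph{invariant} partition regular family. Concretely, I would fix $m,n$ and let $\mathcal{F}$ be the family of all generalized arithmetic progressions of length $n$ and order $m$, that is, all sets of the form $\left\{a_{0}+i_{1}a_{1}+\cdots+i_{m}a_{m}:i_{1},\ldots,i_{m}\in\left\{0,1,\ldots,n\right\}\right\}$ with $a_{0},\ldots,a_{m}\in\mathbb{N}$. The goal then becomes exactly to produce a member of $\mathcal{F}$ that lies inside $A$.

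First I would record that $\mathcal{F}$ is partition regular over $\mathbb{N}$: this is precisely the multidimensional (Gallai-type) consequence of van der Waerden's Theorem quoted immediately before the statement, which guarantees a monochromatic generalized arithmetic progression of any prescribed order $m$ and length $n$ under every finite coloring of $\mathbb{N}$. Second, and this is the heart of the matter, I would verify that $\mathcal{F}$ is invariant under the group operation $\varoast_{l,k}$, the hypothesis needed to transfer partition regularity into the piecewise syndetic setting. Here I would reuse the computation already carried out above: for any $x\in\mathbb{N}$,
\[
x\varoast_{l,k}\left(a_{0}+i_{1}a_{1}+\cdots+i_{m}a_{m}\right)=P_{0}+i_{1}P_{1}+\cdots+i_{m}P_{m},
\]
where $P_{0}=la_{0}x+kx+ka_{0}+\frac{k^{2}-k}{l}$ and $P_{j}=a_{j}\left(lx+k\right)$ for $1\leq j\leq m$. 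This exhibits $x\varoast_{l,k}F$ as again a generalized arithmetic progression of the same order $m$ and length $n$, so $x\varoast_{l,k}F\in\mathcal{F}$, whence $\mathcal{F}$ is $\varoast_{l,k}$-invariant.

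Finally I would apply \cite[Lemma 1]{key-12} to the semigroup $\left(\mathbb{N},\varoast_{l,k}\right)$: since $A$ is piecewise syndetic in this semigroup and $\mathcal{F}$ is an invariant partition regular family, $A$ must contain a member of $\mathcal{F}$, which is precisely a set of the required form, and the theorem follows.

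The step I expect to cause the most friction is the invariance verification, specifically confirming that the translated coefficients $P_{0},P_{1},\ldots,P_{m}$ genuinely land in $\mathbb{N}$ so that $x\varoast_{l,k}F$ is a bona fide member of $\mathcal{F}$ rather than a merely formal expression. This uses that $l\mid k\left(k-1\right)$ makes $\frac{k^{2}-k}{l}$ an integer, together with sign bookkeeping ensuring $lx+k$ and $P_{0}$ remain positive for positive $l$. Once invariance is secured in the exact sense demanded by Beiglb\"ock's lemma, the conclusion is immediate, and no further Ramsey-theoretic input beyond the generalized van der Waerden statement is required.
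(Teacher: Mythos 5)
Your proposal is correct and follows essentially the same route as the paper: the paper's own argument (given in the discussion immediately preceding the theorem) consists of exactly the computation $x\varoast_{l,k}\left(a_{0}+i_{1}a_{1}+\cdots+i_{m}a_{m}\right)=P_{0}+i_{1}P_{1}+\cdots+i_{m}P_{m}$ showing the family of generalized arithmetic progressions is an invariant partition regular family of $\left(\mathbb{N},\varoast_{l,k}\right)$, followed by an appeal to \cite[Lemma 1]{key-12}. Your closing remark about checking that the translated coefficients actually land in $\mathbb{N}$ is a legitimate point of care that the paper glosses over, but it does not change the substance of the argument.
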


Now, let us recall from \cite{key-2} that any piecewise syndetic
set in $\left(\mathbb{N},+\right)$ contains polynomial progressions.
Now we will show that a weak version of polynomial progressions, where
polynomials are considered from $\left(\mathbb{Z},+\right)$ to $\left(\mathbb{Z},+\right)$,
are contained in piecewise syndetic sets in $A\subseteq\left(\mathbb{N},\varoast_{l,k}\right)$
\begin{thm}
Let $P_{1},P_{2},\ldots,P_{m}$ be a finite set of polynomials defined
on $\left(\mathbb{N},+\right)$ with zero constant term. Then for
any piecewise syndetic set $A\subseteq\left(\mathbb{N},\varoast_{l,k}\right)$,
there exist $a,b,d\in\mathbb{N}$ such that 
\[
\left\{ a+bP_{i}\left(d\right):1\leq i\leq m\right\} \subseteq A.
\]
\end{thm}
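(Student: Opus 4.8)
The plan is to reproduce, in the polynomial setting, the template used for the preceding generalized arithmetic progression result: I would exhibit the family of target configurations as an \emph{invariant partition regular family} for $\left(\mathbb{N},\varoast_{l,k}\right)$ and then appeal to \cite[Lemma 1]{key-12}. Accordingly, set
\[
\mathcal{F}=\left\{ \left\{ a+bP_{i}\left(d\right):1\leq i\leq m\right\} :a,b,d\in\mathbb{N}\right\} .
\]
The goal becomes showing that the piecewise syndetic set $A$ must contain a member of $\mathcal{F}$, which is exactly the assertion.

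First I would verify that $\mathcal{F}$ is partition regular over $\mathbb{N}$. This is handed to us directly by the Polynomial van der Waerden Theorem of Bergelson and Leibman \cite{key-2}: since the $P_{i}$ have zero constant term, for any finite colouring of $\mathbb{N}$ there are $a,d\in\mathbb{N}$ with $a$ and all the points $a+P_{i}\left(d\right)$ of one colour. Taking $b=1$, the set $\left\{ a+P_{i}\left(d\right):1\leq i\leq m\right\}$ is then a monochromatic member of $\mathcal{F}$, so $\mathcal{F}$ is partition regular.

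The decisive step is invariance under $\varoast_{l,k}$. For any $x\in\mathbb{N}$ I would expand, using $u\varoast_{l,k}v=\frac{1}{l}\left[\left(lu+k\right)\left(lv+k\right)-k\right]$ together with $l\left(a+bP_{i}\left(d\right)\right)+k=\left(la+k\right)+lbP_{i}\left(d\right)$, to obtain
\[
x\varoast_{l,k}\left(a+bP_{i}\left(d\right)\right)=\left(x\varoast_{l,k}a\right)+\left(lx+k\right)b\,P_{i}\left(d\right).
\]
Hence $x\varoast_{l,k}\left\{ a+bP_{i}\left(d\right):1\leq i\leq m\right\} =\left\{ a'+b'P_{i}\left(d\right):1\leq i\leq m\right\}$ with $a'=x\varoast_{l,k}a$ and $b'=\left(lx+k\right)b$, which is again a member of $\mathcal{F}$. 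This is precisely the invariance of $\mathcal{F}$ under $\varoast_{l,k}$-translations, and it is the exact polynomial analogue of the computation carried out for generalized arithmetic progressions just above; note that it is essential here that $\mathcal{F}$ allows a free parameter $b$, since left translation multiplies $b$ by $\left(lx+k\right)$.

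With both properties in hand, I would conclude by quoting \cite[Lemma 1]{key-12}: an invariant partition regular family has the property that every piecewise syndetic subset of the ambient semigroup contains one of its members, so $A\subseteq\left(\mathbb{N},\varoast_{l,k}\right)$ contains some $\left\{ a+bP_{i}\left(d\right):1\leq i\leq m\right\}$. The step I expect to require the most care is the invariance computation, and in particular checking that $a'=x\varoast_{l,k}a$ and $b'=\left(lx+k\right)b$ genuinely lie in $\mathbb{N}$ so that the image stays inside $\mathcal{F}$; this is where the positivity of $l$ and the closure of $\mathbb{N}$ under $\varoast_{l,k}$ (namely $l\mid k\left(k-1\right)$ with $l,k>0$) are used. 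Partition regularity, by contrast, is essentially free from \cite{key-2}.
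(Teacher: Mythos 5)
Your proposal is correct and follows essentially the same route as the paper: the same family $\mathcal{F}$, the same invariance computation showing $x\varoast_{l,k}\left(a+bP_{i}\left(d\right)\right)=\left(x\varoast_{l,k}a\right)+\left(lx+k\right)bP_{i}\left(d\right)$ (your $a'$ and $b'$ are exactly the paper's $p$ and $q$), and the same appeal to \cite[Lemma 1]{key-12}. You are in fact slightly more careful than the paper, which asserts the partition regularity of $\mathcal{F}$ without citing the Polynomial van der Waerden Theorem.
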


\begin{proof}
Take $\mathcal{F}=\left\{ \left\{ a+bP_{i}\left(d\right)\right\} _{i=1}^{m}:a,b,d\in\mathbb{N}\right\} $.
Then, $\mathcal{F}$ is partition regular over $\mathbb{N}$.

Now for any $x\in\mathbb{N},$

$x\varoast_{l,k}\left(a+bP_{i}\left(d\right)\right)$

$=\frac{1}{l}\left[\left(lx+k\right)\left(l\left(a+bP_{i}\left(d\right)\right)+k\right)-k\right]$

$=\left(lax+kx+ak+\frac{k^{2}-k}{l}\right)+\left(lx+k\right)bP_{i}\left(d\right)$

$=p+qP_{i}\left(d\right)$, where $p=lax+kx+ak+\frac{k^{2}-k}{l}$
and $q=\left(lx+k\right)b$.

So, $\mathcal{F}$ is invariant partition regular family over $\left(\mathbb{N},\varoast_{l,k}\right)$.
So, using \cite[Lemma 1]{key-12}, we conclude the proof.
\end{proof}
\begin{example}
Let us take two polynomials $p_{1}\left(x\right)=2x$ and $p_{2}\left(x\right)=x^{2}$,
then from the above theorem there exist $a,b,c\in\mathbb{N}$ such
that any piecewise syndetic set in $\left(\mathbb{N},\varoast_{l,k}\right)$
contains configurations of the form
\[
\left\{ a+p_{1}\left(b\right)c,a+p_{2}\left(b\right)c\right\} =\left\{ a+2bc,\,a+b^{2}c\right\} .
\]
\end{example}

\begin{example}
Let us take three polynomials $p_{1}\left(x\right)=2x$, $p_{2}\left(x\right)=3x^{2}$
and $p_{3}\left(x\right)=4x^{3}$,then from the above theorem there
exist $a,b,c\in\mathbb{N}$ such that any piecewise syndetic set in
$\left(\mathbb{N},\varoast_{l,k}\right)$ contains configurations
of the form
\[
\left\{ a+2bc,\,a+3b^{2}c,\,a+4b^{3}c\right\} .
\]
\end{example}

\section{An analogue to Polynomial van der Waerden's theorem}

In the suitable symmetric polynomial setting, our analogue for Polynomial
van der Waerden's Theorem is the following:
\begin{thm}
\label{pvdw} Let $d\in\mathbb{N}$ and $\left\{ a_{1}^{\left(i\right)},a_{2}^{\left(i\right)},\ldots,a_{d}^{\left(i\right)}\right\} _{i=1}^{m}\subseteq\mathbb{Z}\setminus\left\{ -\frac{k}{l},-\frac{k+1}{l}\right\} $
for any $m\in\mathbb{N}$. Then there exist $d',c\in\mathbb{N}$
such that 
\[
\frac{1}{l}\left[\left(ld'+k\right)\left(la_{1}^{\left(i\right)}+k\right)^{c}\left(la_{2}^{\left(i\right)}+k\right)^{c^{2}}\ldots\left(la_{d}^{\left(i\right)}+k\right)^{c^{d}}-k\right]
\]
 is monochromatic.
\end{thm}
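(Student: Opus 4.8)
The plan is to reduce the statement to the Polynomial Hales--Jewett Theorem (Theorem \ref{PHJ}) by transporting everything through the multiplicative embedding $\phi(a)=la+k$, which by construction turns $\varoast_{l,k}$ into ordinary multiplication: $\phi(a\varoast_{l,k}b)=\phi(a)\phi(b)$. Throughout I use the standing hypothesis $l\mid k(k-1)$. Its crucial consequence is that any finite product of elements of $\phi(\mathbb{Z})=l\mathbb{Z}+k$ lies again in $l\mathbb{Z}+k$; this is exactly the integrality of $\mathfrak{G}_{l,k}$, since $l\mid k(k-1)$ forces $l\mid k^{M}-k$ for every $M\in\mathbb{N}$. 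Hence $\phi^{-1}$ is well defined from such products back into $\mathbb{Z}$, and I may pull an arbitrary finite coloring of $\mathbb{Z}$ back along $\phi^{-1}$ to color products of factors of the form $la+k$.

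First I would set $q=m$ and apply Theorem \ref{PHJ} with alphabet size $q$, degree $d$, and $r$ colors, obtaining a dimension $N$ and the space $Q(N)=[m]^{N}\times[m]^{N\times N}\times\cdots\times[m]^{N^{d}}$, whose layer-$j$ coordinates are indexed by $[N]^{j}$. On $Q(N)$ I define a multiplicative weight map: for a point $a=(a^{(1)},\ldots,a^{(d)})$ with $a^{(j)}\colon[N]^{j}\to[m]$, put
\[
\Psi(a)=\prod_{j=1}^{d}\prod_{w\in[N]^{j}}\bigl(l\,a_{j}^{(a^{(j)}(w))}+k\bigr),
\]
so a layer-$j$ coordinate carrying the letter $v$ contributes the factor $\phi(a_{j}^{(v)})=la_{j}^{(v)}+k$. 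Every factor lies in $l\mathbb{Z}+k$, hence $\Psi(a)\in l\mathbb{Z}+k$, and I color each $a$ by the original color of $\phi^{-1}(\Psi(a))=\tfrac1l(\Psi(a)-k)\in\mathbb{Z}$. Theorem \ref{PHJ} then yields a base point $a$ and a set $\gamma\subseteq[N]$ such that the combinatorial line is monochromatic; I set $c=|\gamma|\in\mathbb{N}$.

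The decisive computation is the decoding along the diagonal $x_{1}=\cdots=x_{d}=i$, $i\in\{1,\ldots,m\}$, which is a subset of the line and hence monochromatic. For such a point the $|\gamma^{j}|=|\gamma|^{j}=c^{j}$ coordinates of layer $j$ lying in $\gamma^{j}$ all carry the letter $i$ and contribute $(la_{j}^{(i)}+k)^{c^{j}}$, while the remaining coordinates retain their base letters and contribute a factor $B\in l\mathbb{Z}+k$ that is independent of $i$. Writing $B=ld'+k$ gives
\[
\Psi(\text{diagonal point }i)=(ld'+k)\prod_{j=1}^{d}\bigl(la_{j}^{(i)}+k\bigr)^{c^{j}},
\]
so that $\phi^{-1}$ of it equals precisely the claimed value for index $i$. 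Monochromaticity of the diagonal therefore gives the theorem with these $c$ and $d'$.

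The crux of the argument, and the reason ordinary Hales--Jewett does not suffice, is producing the exponent pattern $c,c^{2},\ldots,c^{d}$: it arises automatically because layer $j$ is indexed by $[N]^{j}$ and the wildcard block $\gamma^{j}$ has exactly $c^{j}$ elements, so a multiplicative weight manufactures the $c^{j}$-th power for free. The remaining technical points are secondary: integrality of $\Psi$ (handled by $l\mid k(k-1)$ as above), and the claim $d'\in\mathbb{N}$, for which over $\mathbb{Z}$ one only gets $d'\in\mathbb{Z}$. As in \cite{key-4}, the $\mathbb{N}$-version and naturalness of $d'$ are recovered by taking $l>0$ and using the exclusions $a_{j}^{(i)}\neq-\tfrac{k}{l},-\tfrac{k+1}{l}$ (equivalently $la_{j}^{(i)}+k\neq 0,-1$) to keep every factor away from $0$ and $-1$, which controls the sign of the background product $B$ and guarantees the configuration is nondegenerate.
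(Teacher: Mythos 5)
Your proposal is correct and follows essentially the same route as the paper: both apply the Polynomial Hales--Jewett Theorem to $Q(N)=[q]^{N}\times\cdots\times[q]^{N^{d}}$ with a coloring pulled back through the iterated $\varoast_{l,k}$-product (equivalently, your multiplicative weight $\Psi$ under $\phi(a)=la+k$), and both extract the exponents $c^{j}$ from the fact that the wildcard block $\gamma^{j}$ in layer $j$ has exactly $\lvert\gamma\rvert^{j}$ coordinates, absorbing the fixed coordinates into the factor $ld'+k$. Your write-up is in fact somewhat tidier than the paper's (which detours through piecewise syndetic sets and leaves all $x_{j}$ free rather than restricting to the diagonal), and you are more explicit about the integrality point $l\mid k^{M}-k$ and the sign of $d'$, which the paper glosses over.
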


\begin{fact}
\textbf{Deducing Symmetric van dar Waerden's Theorem from Theorem
\ref{pvdw}:} 

\textup{Now, To verify that the above theorem is really giving the
polynomial version of van der Waerden's Theorem for symmetric polynomial,
let, $\mathbb{Z}=\bigcup_{i=1}^{r}C_{i}$, and $d=1$. }

\textup{Let, for $1\leq i\leq m$, $a^{\left(i\right)}=\frac{1}{l}\left[\left(la^{\left(1\right)}+k\right)^{i}-k\right]$,
and $a^{\left(m+1\right)}=-\frac{k-1}{l}$.}

\textup{Then, there exist $d',c$ such that,
\[
\frac{1}{l}\left[\left(ld'+k\right)\left(la^{\left(i\right)}+k\right)^{c}-k\right],1\leq i\leq m+1
\]
 are monochromatic.}

\textup{Let, $\left(la^{\left(1\right)}+k\right)^{c}=lx+k$, then
\begin{align*}
\frac{1}{l}\left[\left(ld'+k\right)\left(lx+k\right)-k\right],\frac{1}{l}\left[\left(ld'+k\right)\left(lx+k\right)^{2}-k\right],\\
\ldots,\frac{1}{l}\left[\left(ld'+k\right)\left(lx+k\right)^{m}-k\right],d'
\end{align*}
 are monochromatic, which proves our claim.}
\end{fact}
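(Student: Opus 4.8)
The plan is to obtain the symmetric van der Waerden configuration by specializing Theorem \ref{pvdw} to the case $d=1$ and feeding it a carefully engineered family of inputs. With $d=1$, Theorem \ref{pvdw} produces, for some $d',c\in\mathbb{N}$ that the theorem itself selects, a monochromatic set of the form $\frac{1}{l}\left[(ld'+k)(la^{(i)}+k)^{c}-k\right]$ as $i$ ranges over the inputs. Since we do not get to prescribe the exponent $c$, the idea is to build the progression entirely on the input side: if every input is a $\varoast_{l,k}$-power of one fixed base, then the single exponent $c$ supplied by the theorem is distributed across the family and resurfaces in the output as the geometric-progression index.

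Concretely, I would first fix a base $a^{(1)}$ with $la^{(1)}+k\geq 2$, then define the inputs $a^{(i)}=\frac{1}{l}\left[(la^{(1)}+k)^{i}-k\right]$ for $1\leq i\leq m$, so that $la^{(i)}+k=(la^{(1)}+k)^{i}$, together with the distinguished input $a^{(m+1)}=-\frac{k-1}{l}$, which satisfies $la^{(m+1)}+k=1$. Two verifications are required, both resting on the standing hypothesis $l\mid k-1$. First, integrality: since $k\equiv 1\pmod{l}$ we get $(la^{(1)}+k)^{i}\equiv k^{i}\equiv 1\equiv k\pmod{l}$, so each $a^{(i)}\in\mathbb{Z}$, and $a^{(m+1)}=-\frac{k-1}{l}\in\mathbb{Z}$. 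Second, admissibility for Theorem \ref{pvdw}, namely $a^{(i)}\notin\{-\frac{k}{l},-\frac{k+1}{l}\}$, which amounts to $la^{(i)}+k\notin\{0,-1\}$; this holds because $la^{(i)}+k=(la^{(1)}+k)^{i}\geq 2$ for $i\leq m$ and $la^{(m+1)}+k=1$.

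Applying Theorem \ref{pvdw} to these $m+1$ inputs yields $d',c$ making $\frac{1}{l}\left[(ld'+k)(la^{(i)}+k)^{c}-k\right]$ monochromatic for $1\leq i\leq m+1$. Next I would set $lx+k=(la^{(1)}+k)^{c}$, that is $x=\frac{1}{l}\left[(la^{(1)}+k)^{c}-k\right]$, noting $x\in\mathbb{Z}$ by the same congruence $k\equiv 1\pmod{l}$. For $1\leq i\leq m$ the identity $(la^{(i)}+k)^{c}=(la^{(1)}+k)^{ic}=\left((la^{(1)}+k)^{c}\right)^{i}=(lx+k)^{i}$ rewrites the $i$-th term as $\frac{1}{l}\left[(ld'+k)(lx+k)^{i}-k\right]$, which is $a\varoast_{l,k}b^{(i)}$ for $a=d'$ and $b=x$ (here $b^{(i)}$ is the $i$-fold $\varoast_{l,k}$-power, in the paper's notation). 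For $i=m+1$, since $la^{(m+1)}+k=1$, the term collapses to $\frac{1}{l}\left[(ld'+k)-k\right]=d'=a$. Hence the monochromatic set is $\left\{a,\,a\varoast_{l,k}b,\,a\varoast_{l,k}b^{(2)},\ldots,a\varoast_{l,k}b^{(m)}\right\}$, precisely the monochromatic symmetric progression displayed in the statement, establishing the symmetric van der Waerden's Theorem \cite[Theorem 2.9]{key-4} as a special case of Theorem \ref{pvdw}.

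The step I expect to be the main obstacle is the input design of the second paragraph, not any single computation. Because the exponent $c$ is handed to us and cannot be controlled, the progression must be manufactured on the input side by taking $\varoast_{l,k}$-powers of a single base; and the distinguished input $a^{(m+1)}$ with $la^{(m+1)}+k=1$ must be slotted in precisely so that its image degenerates to the anchor term $a=d'$. It is also worth recording that every integrality claim ($a^{(i)},a^{(m+1)},x\in\mathbb{Z}$) and the admissibility constraint are exactly what pin down the divisibility hypothesis $l\mid k-1$: without $k\equiv 1\pmod{l}$ neither the inputs nor the substitution $lx+k=(la^{(1)}+k)^{c}$ need return integers.
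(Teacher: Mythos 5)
Your proposal is correct and follows essentially the same route as the paper: specializing Theorem \ref{pvdw} to $d=1$, defining the inputs as $\varoast_{l,k}$-powers $a^{(i)}=\frac{1}{l}\left[(la^{(1)}+k)^{i}-k\right]$ with the degenerate input $a^{(m+1)}=-\frac{k-1}{l}$ recovering the anchor $d'$, and substituting $lx+k=(la^{(1)}+k)^{c}$ to produce the geometric-type progression. Your added verifications of integrality and admissibility (both resting on $l\mid k-1$) are details the paper leaves implicit, but the argument is the same.
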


Now, here are two examples which show in a simple way the type of
configurations are monochromatic in the Theorem \ref{pvdw}.
\begin{example}
Let $\left(l,k\right)=\left(3,1\right)$ and choose any $n\in\mathbb{N}$.
Let the finite sequence $\left\langle a^{\left(i\right)}\right\rangle _{i=1}^{n}$
is defined by,

$a^{\left(1\right)}=\left(a_{1}^{\left(1\right)},a_{2}^{\left(1\right)},\ldots,a_{n}^{\left(1\right)}\right)=\left(1,0,\ldots,0\right)$, 

$a^{\left(2\right)}=\left(a_{1}^{\left(2\right)},a_{2}^{\left(2\right)},\ldots,a_{n}^{\left(2\right)}\right)=\left(0,1,\ldots,0\right)$
and so on. 

In general, for $i\in\left\{ 1,2,\ldots,n\right\} $, $a^{\left(i\right)}=\left(0,0,\ldots,1,\ldots,0\right)$,
where $1$ is at the $i^{th}$ coordinate.

Then, by Theorem \ref{pvdw}, there exist $x,y\in\mathbb{N}$ such
that 
\[
\left\{ \frac{1}{3}\left(3x+1\right)4^{y}-1,\frac{1}{3}\left(3x+1\right)4^{y^{2}}-1,\ldots,\frac{1}{3}\left(3x+1\right)4^{y^{n}}-1\right\} 
\]
 is monochromatic.
\end{example}

\begin{example}
Let $\left(l,k\right)=\left(2,1\right)$ and choose any $n\in\mathbb{N}$.
Now, take the finite sequence $a^{\left(1\right)}=\left(1,0,\ldots,0\right),a^{\left(2\right)}=\left(3,0,\ldots,0\right),...,a^{\left(n\right)}=\left(2n+1,0,\ldots,0\right)$
and so by Theorem \ref{pvdw}, there exists $x,c$ such that 

\[
\left\{ \frac{1}{2}\left(2x+1\right)3^{c}-1,\frac{1}{2}\left(2x+1\right)5^{c}-1,\ldots,\frac{1}{2}\left(2x+1\right)\left(2n+1\right)^{c}-1\right\} 
\]
 is monochromatic.
\end{example}

Now, we will prove the theorem \ref{pvdw}. The following proof uses
the polynomial Hales-Jewett theorem \cite{key-9}.
\begin{proof}[\textbf{\textit{Proof of Theorem \ref{pvdw}}}]
 Let $q=\left\{ a_{1}^{\left(i\right)},a_{2}^{\left(i\right)},\ldots,a_{m}^{\left(i\right)}\right\} _{i=1}^{d}$
and choose  $N=\text{PHJ}\left(q,r,d\right)$. Then by Theorem \ref{PHJ},
one cell of the $r$-colored partition of $Q\left(N\right)=\left[q\right]^{N}\times\left[q\right]^{N\times N}\times\ldots\times\left[q\right]^{N^{d}}$
will contain a set of the form 
\[
\left\{ a\oplus x_{1}\gamma\oplus x_{2}\left(\gamma\times\gamma\right)\oplus\ldots\oplus x_{d}\gamma^{d}:1\leq x_{i}\leq q\right\} .
\]

Now, our $r$-coloring on $\mathbb{Z}\setminus\left\{ -\frac{k}{l},-\frac{k+1}{l}\right\} $
induces a coloring on $\left[q\right]^{N}\times\left[q\right]^{N\times N}\times\ldots\times\left[q\right]^{N^{d}}$
by taking each string in the Hales-Jewett space say $a_{1}a_{2}\ldots a_{R}\rightarrow a_{1}\varoast_{l,k}a_{2}\varoast_{l,k}\cdots\varoast_{l,k}a_{R}$.

Now, let $A\subseteq\mathbb{Z}$ is piecewise syndetic. So, there
exists a finite set $E$ such that $E^{-1}A$ is thick. As the image
of $Q\left(N\right)$ is finite, translating it by an element say,
$t\in\mathbb{Z}\setminus\left\{ -\frac{k}{l},-\frac{k+1}{l}\right\} $
(we can choose such $t$) such that $\text{IM}\left(Q\left(N\right)\right)\subseteq E^{-1}A$.
Now, give an $r$-color on $Q\left(N\right)$ as for $x,y\in Q\left(N\right)$,
$x\sim y$ if and only if $\text{Im}\left(x\right)$,$\text{Im}\left(y\right)\in t_{1}^{-1}A$
for some $t_{1}\in E$. 

So, we have a monochromatic combinatorial line 
\[
\left\{ a\oplus x_{1}\gamma\oplus x_{2}\left(\gamma\times\gamma\right)\oplus\ldots\oplus x_{d}\gamma^{d}:1\leq x_{i}\leq q;\,1\leq i\leq d\right\} .
\]
 Now, each $a\oplus x_{1}\gamma\oplus x_{2}\left(\gamma\times\gamma\right)\oplus\ldots\oplus x_{d}\gamma^{d}$
goes to 
\[
t_{1}\varoast_{l,k}b_{1}\varoast_{l,k}b_{2}\varoast_{l,k}\ldots\varoast_{l,k}b_{s}\varoast_{l,k}x_{1}^{\left(\left|\gamma\right|\right)}\varoast_{l,k}x_{2}^{\left(\left|\gamma\right|^{2}\right)}\varoast_{l,k}\ldots\varoast_{l,k}x_{d}^{\left(\left|\gamma\right|^{d}\right)},
\]
 where $x_{i}\in\left[q\right]$ for all $i\in\left\{ 1,2,\ldots,d\right\} $.

All of the configurations are in $A$, where without loss of generality,
we have taken $t_{1}\in E$.

Let, $d=t_{1}\varoast_{l,k}t\varoast_{l,k}b_{1}\varoast_{l,k}b_{2}\varoast_{l,k}\ldots\varoast_{l,k}b_{s}$
and therefore the other configurations becomes $d\varoast_{l,k}x_{1}^{\left(\left|\gamma\right|\right)}\varoast_{l,k}x_{2}^{\left(\left|\gamma\right|^{2}\right)}\varoast_{l,k}\ldots\varoast_{l,k}x_{d}^{\left(\left|\gamma\right|^{d}\right)}$
where $x_{i}\in\left[q\right]$ for all $i\in\left\{ 1,2,\ldots,d\right\} $.

Now, $d\varoast_{l,k}x_{1}^{\left(\left|\gamma\right|\right)}\varoast_{l,k}x_{2}^{\left(\left|\gamma\right|^{2}\right)}\varoast_{l,k}\ldots\varoast_{l,k}x_{d}^{\left(\left|\gamma\right|^{d}\right)}$

$=\,\,\frac{1}{l}\left(\left(ld'+k\right)\left(lx_{1}+k\right)^{c}\left(lx_{2}+k\right)^{c^{2}}\ldots\left(lx_{d}+k\right)^{c^{d}}-k\right)$
where $x_{i}\in\left[q\right]$ for all $i\in\left\{ 1,2,\ldots,d\right\} $.

This proves the theorem.
\end{proof}

\section{ Symmetric Polynomial Central Sets Theorem}

In \cite{key-4}, the author used homomorphism map from $\left(\mathbb{Z}.\varoast_{l,k}\right)$
to $\left(\mathbb{Z}.\varoast_{l,k}\right)$ to provided various new
Ramsey theoretic configurations. Here, we have studied polynomial,
i.e; polynomials from $\left(\mathbb{Z},+\right)$ to $\left(\mathbb{Z},\varoast_{l,k}\right)$.

We know any polynomial on the set of natural numbers $\mathbb{N}$
is of the form: 
\[
P\left(x\right)=a_{n}x^{n}+a_{n-1}x^{n-1}+\ldots+a_{0},\,n\in\mathbb{N}.
\]

Now, $P\left(x\right)=a_{n}x^{n}+a_{n-1}x^{n-1}+\ldots+a_{0}$

$\ \ \ \,\,\,\ \ =\left[a_{n}+\ldots+a_{n}\right]\,\left(x^{n}\text{-times}\right)+\left[a_{n-1}+\ldots+a_{n-1}\right]\,\left(x^{n-1}\text{-times}\right)+\ldots+a_{0}$.

Hence, it is natural to expect a polynomial $P:\left(\mathbb{N},+\right)\rightarrow\left(\mathbb{Z}.\varoast_{l,k}\right)$
is of the form,

\[
P\left(x\right)=a_{n}^{\left(x^{n}\right)}\varoast_{l,k}a_{n-1}^{\left(x^{n-1}\right)}\varoast_{l,k}\cdots\varoast_{l,k}a_{0}.
\]
 But we want to extend the domain from $\left(\mathbb{N},+\right)$
to $\left(\mathbb{Z},+\right)$. Interestingly, it is possible. First,
note that for polynomials from $\left(\mathbb{Z},+\right)$ to $\left(\mathbb{Z},+\right)$,

\[
a_{n}x^{n}=\left\{ \begin{array}{c}
a_{n}+a_{n}+\ldots+a_{n}\,\left(x^{n}\text{-times}\right),\ \text{if }n\in2\mathbb{Z}\\
b_{n}+b_{n}+\ldots+b_{n}\,\left(\left|x\right|^{n}\text{-times}\right),\ \text{if }n\in2\mathbb{Z}+1
\end{array}\right.,
\]
 where $b_{n}=\left(\text{sgn }x\right)a_{n}$.

Now, if $c\in\mathbb{Z}$, define,

\[
a^{\left(c\right)}=\left\{ \begin{array}{cc}
a\varoast_{l,k}a\varoast_{l,k}\cdots\varoast_{l,k}a\,\left(c\text{-times}\right) & \text{if }c>0\\
-\frac{k-1}{l} & \text{if }c=0\\
a^{-1}\varoast_{l,k}a^{-1}\varoast_{l,k}\cdots\varoast_{l,k}a^{-1}\,\left(\left|c\right|\text{-times}\right) & \text{if }c<0
\end{array}\right..
\]
 Or, in short $a^{\left(c\right)}=\left(a^{\left(\text{sgn }x\right)}\right)^{\left(\left|c\right|\right)}$,
if $c\neq0$ and $-\frac{k-1}{l}$, if $c=0$.

Note that, $a^{\left(c+d\right)}=a^{\left(c\right)}\cdot a^{\left(d\right)}$
and $a^{\left(cd\right)}=\left(a^{\left(c\right)}\right)^{\left(d\right)}$
for $c,d\in\mathbb{Z}$.

Now, for $n\in\mathbb{N}$, a polynomial $P:\left(\mathbb{Z},+\right)\rightarrow\left(\mathbb{Z},\varoast_{l,k}\right)$
is defined by,

\[
P\left(x\right)=a_{n}^{\left(x^{n}\right)}\varoast_{l,k}a_{n-1}^{\left(x^{n-1}\right)}\varoast_{l,k}\cdots\varoast_{l,k}a_{0},
\]
 where $a_{n},a_{n-1},\ldots,a_{0}\in\mathbb{Z}$, $a_{n}$ is the
leading coefficient and $a_{0}$ is the constant term of the polynomial.

So, $y=P\left(x\right)\Leftrightarrow y=\mathfrak{G}_{l,k}\left(a_{n}^{\left(x^{n}\right)},a_{n-1}^{\left(x^{n-1}\right)},\ldots,a_{o}\right)$

$\Leftrightarrow y=\frac{1}{l}\left[\left(la_{n}+k\right)^{x^{n}}\left(la_{n-1}+k\right)^{x^{n-1}}\cdots\left(la_{0}+k\right)-k\right]$.

From Polynomial van der Waerden's Theorem, we have seen that for any
finite set of zero constant polynomials, we have a monochromatic polynomial
progression. 

Now, we will verify that our polynomial is a group polynomial defined
in \cite{key-1}.

Note for any constant $a\in\mathbb{Z}$, is a polynomial of degree
$0$, let $f$ be a polynomial of degree $d$, then for $h\in\mathbb{Z}$,

$f\left(x+h\right)f\left(x\right)^{-1}=\left(a_{n}^{\left(x+h\right)^{n}}\varoast\left(a_{n}^{-1}\right)^{x^{n}}\right)\varoast_{l,k}\cdots\varoast_{l,k}\left(a_{1}^{\left(x+h\right)}\varoast_{l,k}\left(a_{1}^{-1}\right)^{\left(x\right)}\right)$

$=\prod_{i=1}^{n}a_{i}^{\left(x+h\right)^{i}}\varoast\left(a_{i}^{-1}\right)^{x^{i}}$

$=\prod_{i=1}^{n}a_{i}^{\left(x^{i}+{i \choose 1}x^{i-1}h+\cdots+h^{i}\right)}\varoast_{l,k}\left(a_{i}^{-1}\right)^{\left(x^{i}\right)}$

$=\prod_{i=1}^{n}a_{i}^{\left(x^{i}\right)}\varoast_{l,k}\left(a_{i}^{-1}\right)^{\left(x^{i}\right)}\varoast_{l,k}a_{i}^{\left(\sum_{j=1}^{i}{i \choose j}x^{i-j}h^{j}\right)}$

$=\prod_{i=1}^{n}a_{i}^{\left(\sum_{j=1}^{i}{i \choose j}x^{i-j}h^{j}\right)}$

$=\prod_{i=1}^{n}\left[\prod_{j=1}^{i}a_{i}^{\left({i \choose j}x^{i-j}h^{j}\right)}\right]$

$=\prod_{i=1}^{n}\left[\prod_{j=1}^{i}\left(a_{i}^{\left({i \choose j}h^{j}\right)}\right)^{\left(x^{i-j}\right)}\right]$

$=\prod_{i=1}^{n}\left[\prod_{j=1}^{i}b_{i,j}^{\left(x^{i-j}\right)}\right]$,
where $b_{i,j}=a_{i}^{\left({i \choose j}h^{j}\right)}$

$=\prod_{i=1}^{n}\left[b_{i,1}^{\left(x^{i-1}\right)}\varoast_{l,k}b_{i,2}^{\left(x^{i-2}\right)}\varoast_{l,k}\cdots\varoast_{l,k}b_{i,i}\right]$

$=b_{1,1}\varoast_{l,k}\left(b_{2,1}^{\left(x\right)}\varoast_{l,k}b_{2,2}\right)\varoast_{l,k}\cdots\varoast_{l,k}\left(b_{n,1}^{\left(x^{n-1}\right)}\varoast_{l,k}b_{n,2}^{\left(x^{n-2}\right)}\varoast_{l,k}\cdots\varoast_{l,k}b_{n,n}\right)$

$=b_{n,1}^{\left(x^{n-1}\right)}\varoast_{l,k}\left(b_{n,2}\varoast_{l,k}b_{n-1,1}\right)^{\left(x^{n-2}\right)}\varoast_{l,k}\cdots\varoast_{l,k}\left(\prod_{i=1}^{n}b_{i,i}\right)$.

So, $x\mapsto f\left(x+h\right)f\left(x\right)^{-1}$ is a polynomial
of degree $n-1$. Hence, $f:\left(\mathbb{Z},+\right)\rightarrow\left(\mathbb{Z},\varoast_{l,k}\right)$
is a polynomial of degree $n$. 

\noindent Let us take $\mathbb{P}\left(\left(\mathbb{Z},+\right),\left(\mathbb{Z},\varoast_{l,k}\right)\right)$
be the set of all polynomials $f:\left(\mathbb{Z},+\right)\rightarrow\left(\mathbb{Z},\varoast_{l,k}\right)$
with $f\left(0\right)=-\frac{k-1}{l}$. ( Since, $-\frac{k-1}{l}$
is the identity element of $\left(\mathbb{Z},\varoast_{l,k}\right)$.
)

Now, taking $\Gamma=\mathbb{P}\left(\left(\mathbb{Z},+\right),\left(\mathbb{Z},\varoast_{l,k}\right)\right)$
and by \cite[Example 4.7]{key-1}, $\Gamma$ is a licit ( Definition
\ref{licit} (2)).

Note that $\Gamma$ is a $R$-family ( Definition \ref{R-f} (1)),
directly follows from \cite[Theorem 2.11]{key-1}. Now, we have the
following analogue of Polynomial Central Sets Theorem.
\begin{thm}
\label{C.S.T.}Let $\left\langle y_{\alpha}\right\rangle _{\alpha\in\mathcal{F}}$
be an $IP$-set in $\left(\mathbb{Z},+\right)$. Let $F\subseteq\mathbb{P}\left(\left(\mathbb{Z},+\right),\left(\mathbb{Z},\varoast_{l,k}\right)\right)$
and $A\subseteq\left(\mathbb{Z},\varoast_{l,k}\right)$ is a central
set. Then there exists a sequence $\left\langle x_{n}\right\rangle _{n=1}^{\infty}$
in $\left(\mathbb{Z},\varoast_{l,k}\right)$ and a sub-IP set $\left\langle z_{\beta}\right\rangle _{\beta\in\mathcal{F}}$
of $\left\langle y_{\alpha}\right\rangle _{\alpha\in\mathcal{F}}$
such that for all $f\in F$ and for all $\beta\in\mathcal{F}$ we
have 
\[
\mathfrak{G}_{l,k}\left(x_{\beta}^{<},f\left(z_{\beta}\right)\right)\in A.
\]
\end{thm}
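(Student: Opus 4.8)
The plan is to obtain Theorem~\ref{C.S.T.} as a direct specialization of the Polynomial Central Sets Theorem of \cite{key-1}, reading its abstract target semigroup $G$ as $\left(\mathbb{Z},\varoast_{l,k}\right)$ and its abstract source semigroup $H$ as $\left(\mathbb{Z},+\right)$. Both are countable commutative semigroups (indeed $\left(\mathbb{Z},\varoast_{l,k}\right)$ is a group), so the hypotheses on $G$ and $H$ hold. The family of functions will be $\Gamma=\mathbb{P}\left(\left(\mathbb{Z},+\right),\left(\mathbb{Z},\varoast_{l,k}\right)\right)$, which by the discussion preceding the statement is licit (by \cite[Example 4.7]{key-1}) and an $R$-family (by \cite[Theorem 2.11]{key-1}). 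In this way essentially all the structural work has been front-loaded, and what remains is to choose the idempotent correctly and then translate the abstract conclusion into the symmetric-polynomial notation.

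First I would use the hypothesis that $A$ is central in $\left(\mathbb{Z},\varoast_{l,k}\right)$ to fix a minimal idempotent $p\in\beta\left(\mathbb{Z},\varoast_{l,k}\right)$ with $A\in p$; in particular $p$ is an idempotent ultrafilter and $\Gamma$ is an $R$-family with respect to $p$, as required by the cited theorem. Taking the given finite $F\subseteq\Gamma$ and the given IP-set $\left\langle y_{\alpha}\right\rangle _{\alpha\in\mathcal{F}}$ in $H=\left(\mathbb{Z},+\right)$, I would then apply the Polynomial Central Sets Theorem directly. It yields a sub-IP-set $\left\langle z_{\beta}\right\rangle _{\beta\in\mathcal{F}}$ of $\left\langle y_{\alpha}\right\rangle$ and an IP-set $\left\langle x_{\beta}\right\rangle _{\beta\in\mathcal{F}}$ in $G=\left(\mathbb{Z},\varoast_{l,k}\right)$ such that, for every $f\in F$ and every $\beta$,
\[
x_{\beta}\varoast_{l,k}f\left(z_{\beta}\right)\in A,
\]
where the ``$+$'' of the abstract statement is here the operation $\varoast_{l,k}$.

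The last step will be a matter of rewriting. An IP-set in $\left(\mathbb{Z},\varoast_{l,k}\right)$ is the finite-$\varoast_{l,k}$-product set of a base sequence $\left\langle x_{n}\right\rangle _{n=1}^{\infty}$; that is, for $\beta=\left\{ i_{1}<i_{2}<\cdots<i_{p}\right\} $ one has $x_{\beta}=x_{i_{1}}\varoast_{l,k}\cdots\varoast_{l,k}x_{i_{p}}$. By the Proposition identifying iterated $\varoast_{l,k}$-products with the $\left(l,k\right)$-symmetric polynomial, this equals $\mathfrak{G}_{l,k}\left(x_{i_{1}},\ldots,x_{i_{p}}\right)=\mathfrak{G}_{l,k}\left(x_{\beta}^{<}\right)$. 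Appending the factor $f\left(z_{\beta}\right)$ and using the composition property recorded in the Fact (part 2), namely $\mathfrak{G}_{l,k}\left(a,b\right)\varoast_{l,k}\mathfrak{G}_{l,k}\left(c,d,e\right)=\mathfrak{G}_{l,k}\left(a,b,c,d,e\right)$ and its evident generalization, I would conclude
\[
\mathfrak{G}_{l,k}\left(x_{\beta}^{<},f\left(z_{\beta}\right)\right)=x_{\beta}\varoast_{l,k}f\left(z_{\beta}\right)\in A,
\]
which is exactly the asserted configuration, with $\left\langle x_{n}\right\rangle _{n=1}^{\infty}$ the base sequence just extracted.

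I expect the only real friction to be notational rather than mathematical: one must keep careful track of the two operations living on the same set $\mathbb{Z}$, namely the additive structure on the domain $H$ (on which $\left\langle y_{\alpha}\right\rangle$ and the polynomials $f$ act) versus the $\varoast_{l,k}$ structure on the codomain $G$ (which governs both $A$ and $\left\langle x_{\beta}\right\rangle$), and one must read the additively written conclusion $x_{\beta}+f\left(z_{\beta}\right)\in A$ of \cite{key-1} in the $\varoast_{l,k}$ sense. The genuinely substantive hypotheses---liciteness and the $R$-family property of $\Gamma$---are precisely what was established in the paragraphs preceding the statement, so they are available for free and no further dynamical or ultrafilter argument is needed here.
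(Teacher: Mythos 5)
Your proposal is correct and follows essentially the same route as the paper: the paper's own proof simply invokes the Polynomial Central Sets Theorem of \cite{key-1} (with $G=\left(\mathbb{Z},\varoast_{l,k}\right)$, $H=\left(\mathbb{Z},+\right)$, and $\Gamma=\mathbb{P}\left(\left(\mathbb{Z},+\right),\left(\mathbb{Z},\varoast_{l,k}\right)\right)$, whose licitness and $R$-family property were established in the preceding discussion) and then rewrites $x_{\beta}\varoast_{l,k}f\left(z_{\beta}\right)=\mathfrak{G}_{l,k}\left(x_{\beta}^{<},f\left(z_{\beta}\right)\right)$. Your version spells out the specialization, the choice of minimal idempotent, and the translation step more carefully than the paper does, but the argument is the same.
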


\begin{proof}
From \cite[Theorem 3.8]{key-1}, there exists such sequences and 

$x_{\beta}\varoast_{l,k}f\left(z_{\beta}\right)=\prod_{i\in\beta}x_{i}\varoast_{l,k}f\left(z_{\beta}\right)=\mathfrak{G}_{l,k}\left(x_{\beta}^{<},f\left(z_{\beta}\right)\right)\in A$.
\end{proof}
Now, if $F\subseteq\mathbb{P}\left(\left(\mathbb{Z},+\right),\left(\mathbb{Z},\varoast_{l,k}\right)\right)$
be a finite set of polynomials with coefficients 
\[
\left\{ a_{1}^{\left(i\right)},a_{2}^{\left(i\right)},\ldots,a_{d}^{\left(i\right)}\right\} _{i=1}^{m},
\]
 where $\left|F\right|=m$, then by the above theorem we get the following
corollary,
\begin{cor}
Let $\left\{ a_{1}^{\left(i\right)},a_{2}^{\left(i\right)},\ldots,a_{d}^{\left(i\right)}\right\} _{i=1}^{m}\subseteq\mathbb{Z}\setminus\left\{ -\frac{k}{l},-\frac{k+1}{l}\right\} $
be a finite set. Then for any finite partition of $\mathbb{Z}$, and
any $IP$-set $\text{FS}\left(\left\langle y_{n}\right\rangle _{n=1}^{\infty}\right)$,
there exist a sequence $\left\langle x_{n}\right\rangle _{n=1}^{\infty}$
and a sub-$IP$-set $\text{FS}\left(\left\langle z_{n}\right\rangle _{n=1}^{\infty}\right)$
such that 
\begin{align*}
\left\{ \frac{1}{l}\left[\prod_{j\in\beta}\left(lx_{j}+k\right)\left(la_{1}^{\left(i\right)}+k\right)^{z_{\beta}}\left(la_{2}^{\left(i\right)}+k\right)^{z_{\beta}^{2}}\cdots\left(la_{d}^{\left(i\right)}+k\right)^{z_{\beta}^{d}}-k\right]\right.\\
\mid\beta\in\mathcal{P}_{f}\left(\mathbb{N}\right)\Biggr\}  ,
\end{align*}
 is monochromatic.
\end{cor}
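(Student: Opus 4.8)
The plan is to obtain the corollary as a direct specialization of Theorem \ref{C.S.T.}, with the only genuine work being (a) producing a single central cell of the given partition and (b) unwinding the abstract membership $\mathfrak{G}_{l,k}(x_\beta^<, f(z_\beta)) \in A$ into the explicit product displayed in the statement. First I would fix the finite partition $\mathbb{Z} = \bigcup_{i=1}^r C_i$. Since $(\mathbb{Z}, \varoast_{l,k})$ is a semigroup (indeed a group), its Stone-\v{C}ech compactification carries a smallest two-sided ideal $K(\beta\mathbb{Z})$ and hence a minimal idempotent $p$. As $p$ is an ultrafilter, exactly one cell, say $C_{i_0}$, belongs to $p$, and by definition this cell is central in $(\mathbb{Z}, \varoast_{l,k})$. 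I then set $A = C_{i_0}$.

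Next I would encode the coefficient data $\{a_1^{(i)}, \ldots, a_d^{(i)}\}_{i=1}^m$ as polynomials. For each $i$, let $f_i \in \mathbb{P}((\mathbb{Z},+),(\mathbb{Z},\varoast_{l,k}))$ be the polynomial $f_i(x) = (a_d^{(i)})^{(x^d)} \varoast_{l,k} \cdots \varoast_{l,k} (a_1^{(i)})^{(x)}$ whose constant term is the identity $-\frac{k-1}{l}$, so that $f_i(0) = -\frac{k-1}{l}$ as required for membership in $\mathbb{P}$. The hypothesis $a_j^{(i)} \notin \{-k/l, -(k+1)/l\}$ guarantees $la_j^{(i)}+k \notin \{0,-1\}$, so each coefficient is a genuine invertible element of the group and the negative powers $a^{(c)}$ appearing in the group-polynomial formalism are well-defined; hence $F = \{f_1, \ldots, f_m\}$ is a finite subset of $\Gamma = \mathbb{P}((\mathbb{Z},+),(\mathbb{Z},\varoast_{l,k}))$, which was already shown to be licit and an $R$-family.

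Now I would apply Theorem \ref{C.S.T.} to the central set $A$, the family $F$, and the given $IP$-set $\text{FS}(\langle y_n\rangle_{n=1}^\infty)$. This yields a single sequence $\langle x_n\rangle_{n=1}^\infty$ in $(\mathbb{Z},\varoast_{l,k})$ and a single sub-$IP$-set $\text{FS}(\langle z_n\rangle_{n=1}^\infty)$ such that $\mathfrak{G}_{l,k}(x_\beta^<, f_i(z_\beta)) \in A$ simultaneously for every $i \in \{1,\ldots,m\}$ and every $\beta \in \mathcal{P}_f(\mathbb{N})$. The remaining step is purely computational: using the defining identity $\mathfrak{G}_{l,k}(b_1,\ldots,b_s) = \frac{1}{l}[\prod_t (lb_t+k) - k]$ together with the polynomial equivalence $l f_i(z_\beta) + k = \prod_{s=1}^d (la_s^{(i)}+k)^{z_\beta^s}$, I expand $\mathfrak{G}_{l,k}(x_\beta^<, f_i(z_\beta))$ as $\frac{1}{l}[\prod_{j\in\beta}(lx_j+k) \cdot \prod_{s=1}^d(la_s^{(i)}+k)^{z_\beta^s} - k]$, which is exactly the displayed element. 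Since all of these lie in the single cell $A = C_{i_0}$, the configuration, ranging over all $\beta$ and all $i$, is monochromatic.

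The main obstacle I anticipate is not the invocation of Theorem \ref{C.S.T.}, which is used as a black box, but the bookkeeping in the final collapse: one must be careful that the exponents $z_\beta, z_\beta^2, \ldots, z_\beta^d$ attach to the correct coefficients $a_1^{(i)}, \ldots, a_d^{(i)}$ (this forces the descending indexing in the definition of $f_i$), and that the tuple $x_\beta^<$ contributes precisely the factor $\prod_{j\in\beta}(lx_j+k)$ through $\mathfrak{G}_{l,k}$. A secondary point I would verify explicitly is that the coefficient-exclusion hypothesis is exactly what is needed for each $f_i$ to be a legitimate element of $\mathbb{P}$, so that $F \subseteq \Gamma$ and Theorem \ref{C.S.T.} genuinely applies.
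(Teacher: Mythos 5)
Your proposal is correct and follows essentially the same route as the paper, which obtains the corollary by applying Theorem \ref{C.S.T.} to the finite family $F$ of polynomials with the prescribed coefficients and then expanding $\mathfrak{G}_{l,k}\left(x_{\beta}^{<},f\left(z_{\beta}\right)\right)$ via the identity $lf_{i}\left(z_{\beta}\right)+k=\prod_{s=1}^{d}\left(la_{s}^{\left(i\right)}+k\right)^{z_{\beta}^{s}}$. In fact you supply more detail than the paper does (the choice of a central cell of the partition, the verification that each $f_{i}$ lies in $\mathbb{P}\left(\left(\mathbb{Z},+\right),\left(\mathbb{Z},\varoast_{l,k}\right)\right)$, and the explicit collapse of the product), all of which is consistent with the paper's one-line derivation.
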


The following example shows one of the most simple monochromatic configuration
one can get from Theorem \ref{C.S.T.}.
\begin{example}
Let $d,k,n\in\mathbb{N}$, $\left(l,k\right)=\left(2,1\right)$ and
take the finite sequence
\begin{align*}
A=\left(0,0,\ldots,0\right)\cup\left\{ \left(i,0,\ldots,0\right),\left(0,i,\ldots,0\right),\right.\\
\left.\ldots,\left(0,0,\ldots,i\right)\,|1\leq i\leq n.\right\} .
\end{align*}
Then, there exists a sub-$IP$-set $\text{FS}\left(\left\langle z_{n}\right\rangle _{n=1}^{\infty}\right)$
such that 
\begin{align*}
\left\lbrace \frac{1}{2}\left[\prod_{j\in\beta}\left(2x_{j}+1\right)\left(2a_{1}^{\left(i\right)}+1\right)^{z_{\beta}}\left(2a_{2}^{\left(i\right)}+1\right)^{z_{\beta}^{2}}\cdots\left(2a_{d}^{\left(i\right)}+1\right)^{z_{\beta}^{d}}-1\right]\right.\\
\Big\vert \beta\in\mathcal{P}_{f}\left(\mathbb{N}\right) \Biggr\} 
\end{align*}

\noindent where $\left\{ a_{1}^{\left(i\right)},a_{2}^{\left(i\right)},\ldots,a_{d}^{\left(i\right)}\right\} \subseteq A$,
is monochromatic. 

Let, for $1\leq j\leq n$, $a_{j}=2x_{j}+1$, then the simple form
of the monochromatic form is as follows:
\pagebreak
\[
\frac{1}{2}\left(a_{1}-1\right),\frac{1}{2}\left(a_{1}3^{z_{1}}-1\right),\frac{1}{2}\left(a_{1}5^{z_{1}}-k\right),\ldots,\frac{1}{2}\left(a_{1}\left(2n+1\right)^{z_{1}}-1\right),
\]

\[
\frac{1}{2}\left(a_{1}3^{z_{1}^{2}}-1\right),\frac{1}{2}\left(a_{1}5^{z_{1}^{2}}-1\right),\ldots,\frac{1}{2}\left(a_{1}\left(2n+1\right)^{z_{1}^{2}}-1\right),
\]

\[
\vdots
\]

\[
\frac{1}{2}\left(a_{1}3^{z_{1}^{d}}-1\right),\frac{1}{2}\left(a_{1}5^{z_{1}^{d}}-1\right),\ldots,\frac{1}{2}\left(a\left(2n+1\right)^{z_{1}^{d}}-1\right),
\]
\vspace{.2in}
\[
\frac{1}{2}\left(a_{2}-1\right),\frac{1}{2}\left(a_{2}3^{z_{2}}-1\right),\frac{1}{2}\left(a_{2}5^{z_{2}}-1\right),\ldots,\frac{1}{2}\left(a_{2}\left(2n+1\right)^{z_{2}}-1\right),
\]

\[
\frac{1}{2}\left(a_{2}3^{z_{2}^{2}}-1\right),\frac{1}{2}\left(a_{2}5^{z_{2}^{2}}-1\right),\ldots,\frac{1}{2}\left(a_{2}\left(2n+1\right)^{z_{2}^{2}}-1\right),
\]

\[
\vdots
\]

\[
\frac{1}{2}\left(a_{2}3^{z_{2}^{d}}-1\right),\frac{1}{2}\left(a_{2}5^{z_{2}^{d}}-1\right),\ldots,\frac{1}{2}\left(a_{2}\left(2n+1\right)^{z_{2}^{d}}-1\right),
\]
\vspace{.2in}
{\small{}
\[
\frac{1}{2}\left(a_{1}a_{2}-1\right),\frac{1}{2}\left(a_{1}a_{2}3^{z_{1}+z_{2}}-1\right),\frac{1}{2}\left(a_{1}a_{2}5^{z_{1}+z_{2}}-1\right),\ldots,\frac{1}{2}\left(a_{1}a_{2}\left(2n+1\right)^{z_{1}+z_{2}}-1\right),
\]
}{\small\par}

{\small{}
\[
\frac{1}{2}\left(a_{1}a_{2}3^{\left(z_{1}+z_{2}\right)^{2}}-1\right),\frac{1}{2}\left(a_{1}a_{2}5^{\left(z_{1}+z_{2}\right)^{2}}-1\right),\ldots,\frac{1}{2}\left(a_{1}a_{2}\left(2n+1\right)^{\left(z_{1}+z_{2}\right)^{2}}-1\right),
\]
}{\small\par}

{\small{}
\[
\vdots
\]
}{\small\par}

{\small{}
\[
\frac{1}{2}\left(a_{1}a_{2}3^{\left(z_{1}+z_{2}\right)^{d}}-1\right),\frac{1}{2}\left(a_{1}a_{2}5^{\left(z_{1}+z_{2}\right)^{d}}-1\right),\ldots,\frac{1}{2}\left(a_{1}a_{2}\left(2n+1\right)^{\left(z_{1}+z_{2}\right)^{d}}-1\right)
\]
}{\small\par}

{\small{}
\[
\noindent \vdots \hspace{.5in} \vdots \hspace{.5in} \vdots
\]
}{\small\par}
\end{example}

\section{A variant of the Polynomial Deuber's Theorem}

Continuing from the previous section, let us now introduce multi-variable
polynomials from $\left(\mathbb{Z}^{m},+\right)$ to $\left(\mathbb{Z},\varoast_{l,k}\right)$
for any $m$. First note that, a multi-variable polynomial of degree
$n$, $P:\left(\mathbb{Z}^{m},+\right)\rightarrow\left(\mathbb{Z},+\right)$
is of the form 
\[
P\left(x_{1},x_{2},\ldots,x_{m}\right)=\sum_{i_{1}+i_{2}+\cdots+i_{m}\leq n}\alpha_{i_{1}i_{2}\cdots i_{m}}x_{1}^{i_{1}}x_{2}^{i_{2}}\cdots x_{m}^{i_{m}}.
\]
 So, as from the discussion of the previous section, a polynomial
of $m$ variable and of degree $n$ from $\left(\mathbb{Z}^{m},+\right)$
to $\left(\mathbb{Z},\varoast_{l,k}\right)$ should be of the form,
\[
P\left(x_{1},x_{2},\ldots,x_{m}\right)=\sum_{i_{1}+i_{2}+\cdots+i_{m}\leq n}\alpha_{i_{1}i_{2}\cdots i_{m}}^{\left(x_{1}^{i_{1}}x_{2}^{i_{2}}\cdots x_{m}^{i_{m}}\right)}.
\]
 It is straightforward to verify that this is an $n$ degree polynomial.

Now, as it is clear that $\left(\mathbb{Z},+\right)$ and $\left(\mathbb{Z},\varoast_{l,k}\right)$
are two different groups, so we cannot apply \cite[Theorem 4.9]{key-1},
but a weak version of that theorem holds in our care.

The following definition is a variant of the so called $\left(m,p,c\right)$-set.
\begin{defn}
Let $m\in\mathbb{N}$ and $\vec{F}$ is an $m$-tuple $\vec{F}=\left\{ F_{1},F_{2},\ldots,F_{m}\right\} $,
where $F_{i}\subseteq\mathbb{P}\left(\left(\mathbb{Z}^{i},+\right),\left(\mathbb{Z},\varoast_{l,k}\right)\right)$.
Then, for any given $IP$-set $\left\langle S_{\alpha}\right\rangle _{\alpha\in\mathcal{P}_{f}\left(\mathbb{N}\right)}\subseteq\left(\mathbb{Z}\setminus\left\{ 0\right\} \right)^{m+1}$
and $B$, the collection of $m+1$ base sequences $B=\left\{ B_{0},B_{1},\ldots,B_{m}\right\} $,
the set $D\left(m,\vec{F},\varoast_{l,k},S_{\alpha}^{<},B\right)$
is defined by,
\[
D\left(m,\vec{F},\varoast_{l,k},S_{\alpha}^{<},B\right)=\left\{ \begin{array}{cc}
\mathfrak{G}_{l,k}\left(S_{\alpha,0}^{<\left(B_{0}\right)}\right)\\
\mathfrak{G}_{l,k}\left(S_{\alpha,1}^{<\left(B_{1}\right)},f\left(S_{\alpha,0}\right)\right) & f\in F_{1}\\
\mathfrak{G}_{l,k}\left(S_{\alpha,2}^{<\left(B_{2}\right)},f\left(S_{\alpha,0},S_{\alpha,1}\right)\right) & f\in F_{2}\\
\vdots & \vdots\\
\mathfrak{G}_{l,k}\left(S_{\alpha,m}^{<\left(B_{m}\right)},f\left(S_{\alpha,0},\ldots,S_{\alpha,m-1}\right)\right) & f\in F_{m}
\end{array}\right\} .
\]
\end{defn}

\begin{thm}
\label{P.deu} Let $l,k,m\in\mathbb{Z}$, $\mathbb{Z}=C_{1}\cup C_{2}\cup\cdots\cup C_{r}$
be any finite partition of $\mathbb{Z}$ and for $1\leq i\leq m$,
$F_{i}\subseteq\mathbb{P}\left(\left(\mathbb{Z}^{i},+\right),\left(\mathbb{Z},\varoast_{l,k}\right)\right)$
be a finite collection of polynomials. Then there exists an $IP$-set
$\left\langle S_{\alpha}\right\rangle _{\alpha\in\mathcal{P}_{f}\left(\mathbb{N}\right)}$
in $\left(\mathbb{Z}^{m+1},+\right)$ and a collection of $m+1$ base
sequences $B=\left\{ B_{0},B_{1},\ldots,B_{m}\right\} $ such that
for some $i\in\left\{ 1,2,\ldots,r\right\} $, 
\[
D\left(m,\vec{F},\varoast_{l,k},S_{\alpha}^{<},B\right)\in C_{i}.
\]
\end{thm}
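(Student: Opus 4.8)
The plan is to transport the proof of the Polynomial Deuber Theorem \cite[Theorem 4.9]{key-1} to our setting, re-running its inductive argument on top of the Polynomial Central Sets Theorem \cite[Theorem 3.8]{key-1}. The decisive point is that \cite[Theorem 3.8]{key-1} permits the source semigroup $H$ and the target semigroup $G$ to be different, whereas \cite[Theorem 4.9]{key-1} is stated for a single semigroup; this is exactly why a direct appeal to the latter is impossible here and why only a weaker statement survives. In our situation $H=\left(\mathbb{Z},+\right)$ and $G=\left(\mathbb{Z},\varoast_{l,k}\right)$ are genuinely distinct, the $IP$-set $\left\langle S_{\alpha}\right\rangle$ lives in $\left(\mathbb{Z}^{m+1},+\right)=H^{m+1}$, while the configuration $D\left(m,\vec{F},\varoast_{l,k},S_{\alpha}^{<},B\right)$ lives in $G$. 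The base sequences $B_{0},\ldots,B_{m}$ take over the job played by the $IP$-regular homomorphism $c$ of \cite[Theorem 4.9]{key-1}: the map $\alpha\mapsto\mathfrak{G}_{l,k}\!\left(S_{\alpha,j}^{<\left(B_{j}\right)}\right)$ converts additive index data into an $\varoast_{l,k}$-product and so bridges the two group structures.

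First I would reduce to a single central cell. Since $\left(\mathbb{Z},\varoast_{l,k}\right)$ is a commutative semigroup (indeed a group), $\beta\left(\mathbb{Z},\varoast_{l,k}\right)$ contains a minimal idempotent $p$; as $\mathbb{Z}\in p$ and the $C_{i}$ partition $\mathbb{Z}$, exactly one cell $C_{i_{0}}$ lies in $p$ and is therefore central. It thus suffices to show that for every central $A\subseteq\left(\mathbb{Z},\varoast_{l,k}\right)$ there are an $IP$-set $\left\langle S_{\alpha}\right\rangle$ in $\left(\mathbb{Z}^{m+1},+\right)$ and base sequences $B=\left\{B_{0},\ldots,B_{m}\right\}$ with $D\left(m,\vec{F},\varoast_{l,k},S_{\alpha}^{<},B\right)\subseteq A$, after which $A=C_{i_{0}}$ finishes the proof. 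Before the construction I would record the two structural inputs, carried over from the single-variable discussion of Section 5 by the same arguments of \cite{key-1}: each family $\Gamma_{i}=\mathbb{P}\left(\left(\mathbb{Z}^{i},+\right),\left(\mathbb{Z},\varoast_{l,k}\right)\right)$ is licit (Definition \ref{licit}) by \cite[Example 4.7]{key-1}, and is an $R$-family with respect to $p$ (Definition \ref{R-f}) by \cite[Theorem 2.11]{key-1}. These are precisely the hypotheses needed to invoke the Central Sets Theorem at each level.

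The construction then proceeds by induction on $m$, adding one row of $D$ at a time. Recalling that an iterated $\varoast_{l,k}$-product equals $\mathfrak{G}_{l,k}$ of the corresponding tuple, one sees that a typical row $\mathfrak{G}_{l,k}\!\left(S_{\alpha,j}^{<\left(B_{j}\right)},f\left(S_{\alpha,0},\ldots,S_{\alpha,j-1}\right)\right)$ is exactly the quantity $x_{\beta}\varoast_{l,k}f\left(y_{\beta}\right)$ produced by the Central Sets Theorem, with $x_{\beta}=\mathfrak{G}_{l,k}\!\left(S_{\beta,j}^{<\left(B_{j}\right)}\right)$ an element of the $G$-side $IP$-set generated by $B_{j}$ and $y_{\beta}=\left(S_{\beta,0},\ldots,S_{\beta,j-1}\right)$ an element of an additive $IP$-set in $H^{j}$. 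The base case is Row $0$ alone, which only needs that a central set in the group $\left(\mathbb{Z},\varoast_{l,k}\right)$ contains an $\mathrm{FP}$-set (a set of finite $\varoast_{l,k}$-products); choosing $B_{0}$ among its generators settles it. For the inductive step I would assume the statement for $m-1$, obtaining an $IP$-set on coordinates $0,\ldots,m-1$ and base sequences $B_{0},\ldots,B_{m-1}$ realizing Rows $0$ through $m-1$ inside $A$; I would then feed this $IP$-set, viewed in $H^{m}=\left(\mathbb{Z}^{m},+\right)$, into the Central Sets Theorem together with the finite family $F_{m}$, thereby producing the base sequence $B_{m}$, the new coordinate $S_{\alpha,m}$, and a sub-$IP$-set on which Row $m$ holds for every $f\in F_{m}$.

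I expect the real work to be the bookkeeping that keeps these levels compatible. The Central Sets Theorem delivers the new row only after passing to a \emph{sub}-$IP$-set, so I must check that the earlier Rows $0,\ldots,m-1$ survive this refinement. On the additive side a sub-$IP$-set is obtained by grouping the index set $\mathbb{N}$ into increasing blocks; because the generators of $\left\langle S_{\alpha}\right\rangle$ are tuples in $\mathbb{Z}^{m+1}$, one blocking refines all $m+1$ coordinates simultaneously, and replacing each $B_{j}$ by the sequence of $\varoast_{l,k}$-products over those blocks keeps every previously secured row inside $A$, since a product of blocks is again a finite $\varoast_{l,k}$-product of the original generators. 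Arranging this block-refinement uniformly, so that one $IP$-set in $\left(\mathbb{Z}^{m+1},+\right)$ and one system of base sequences serves all rows at once, is the delicate step; it is the analogue, in the $G\neq H$ regime, of the $IP$-regularity of $c$ (Definition \ref{IP-regular}) that underlies \cite[Theorem 4.9]{key-1}.
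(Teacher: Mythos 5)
Your proposal is correct and follows essentially the same route as the paper: reduce to a single central cell of $\left(\mathbb{Z},\varoast_{l,k}\right)$, start from a finite-products set for Row $0$, and then iteratively invoke the Central Sets Theorem machinery of \cite{key-1} (using that each $\mathbb{P}\left(\left(\mathbb{Z}^{i},+\right),\left(\mathbb{Z},\varoast_{l,k}\right)\right)$ is a licit $R$-family) to adjoin one base sequence $B_{j}$ and one row at a time, passing to sub-$IP$-sets so that earlier rows survive. The paper presents this as an explicit two-step iteration followed by ``iterating this argument,'' while you phrase it as induction on $m$ and are somewhat more candid about the block-refinement bookkeeping, but the substance is the same.
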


\begin{proof}
Let for $i\in\left\{ 1,2,\ldots,r\right\} $, $C_{i}$ is central
in $\left(\mathbb{Z},\varoast_{l,k}\right)$. So, there exists an
injective sequence $\left\langle x_{n,0}\right\rangle _{n=1}^{\infty}$
such that $\mathfrak{G}_{l,k}\left(x_{n,0}\right)_{n=1}^{\infty}\in C_{i}$.
Passing to subsequence if necessary, we can assume the finite sums
are distinct, i.e; $x_{\alpha,0}\neq x_{\beta,0}$ if $\alpha\neq\beta$. 

Let, $B_{0}=\left\{ x_{n,0}:n\in\mathbb{N}\right\} $. Then, $B_{0}\subseteq C_{i}$.
Then applying \cite[Theorem 4.10]{key-1}, we obtain a sub-$IP$-set
$\text{FS}\left(\left\langle y_{n,0}\right\rangle _{n=1}^{\infty}\right)$
of $\text{FS}\left(\left\langle x_{n,0}\right\rangle _{n=1}^{\infty}\right)$
and an injective sequence $B_{1}=\left\{ x_{n,1}:n\in\mathbb{N}\right\} $
such that $\mathfrak{G}_{l,k}\left(x_{\beta,1}^{<\left(B_{1}\right)},f\left(y_{\beta,0}\right)\right)\in C_{i}$
for all $f\in F_{1}$ and $\beta\in\mathcal{P}_{f}\left(\mathbb{N}\right)$.

Again, passing to a subsequence if necessary, we have $x_{\alpha,1}\neq x_{\beta,1}$
if $\alpha\neq\beta$. So, we have an $IP$-set $S_{\alpha}=\left(S_{\alpha,0},S_{\alpha,1}\right)$
and two base sequences $B_{0},B_{1}$ such that 
\[
\mathfrak{G}_{l,k}\left(S_{\alpha,0}^{<\left(B_{0}\right)}\right)\in C_{i}
\]
 and 
\[
\mathfrak{G}_{l,k}\left(S_{\alpha,1}^{<\left(B_{1}\right)},f\left(S_{\alpha,0}\right)\right)\in C_{i}
\]
for all $f\in F_{1}$ and all $\alpha\in\mathcal{P}_{f}\left(\mathbb{N}\right)$.
Here we have taken $S_{n}=\left(S_{n,0},S_{n,1}\right)=\left(y_{n,0},x_{n,1}\right)$.

So, by \cite[Theorem 4.10]{key-1}, we obtain a sub-$IP$-set $\text{FS}\left(\left\langle S'_{n}\right\rangle _{n=1}^{\infty}\right)$
of $S_{\alpha}$ and an injective sequence $B_{2}=\left\{ x_{n,2}:n\in\mathbb{N}\right\} $
such that $\mathfrak{G}_{l,k}\left(x_{\beta,2}^{<\left(B_{2}\right)},f\left(S'_{\beta}\right)\right)\in C_{i}$
for all $f\in F_{2}$ and $\beta\in\mathcal{P}_{f}\left(\mathbb{N}\right)$.

Proceeding in a similar way, passing to a subsequence if necessary
we may assume $x_{\alpha,2}\neq x_{\beta,2}$ if $\alpha\neq\beta$.
So, we have an $IP$-set $S_{\alpha}=\left(S_{\alpha,0},S_{\alpha,1},S_{\alpha,2}\right)$
and three base sequences $B_{0},B_{1},B_{2}$ such that 
\[
\mathfrak{G}_{l,k}\left(S_{\alpha,0}^{<\left(B_{0}\right)}\right)\in C_{i},
\]
\[
\mathfrak{G}_{l,k}\left(S_{\alpha,1}^{<\left(B_{1}\right)},f\left(S_{\alpha,0}\right)\right)\in C_{i},
\]
 and
\[
\mathfrak{G}_{l,k}\left(S_{\alpha,2}^{<\left(B_{2}\right)},f\left(S_{\alpha,0},S_{\alpha,1}\right)\right)\in C_{i}
\]
for all $f\in F_{2}$ and all $\alpha\in\mathcal{P}_{f}\left(\mathbb{N}\right)$. 

Here, $S_{\alpha}=\left(S_{\alpha,0},S_{\alpha,1},S_{\alpha,2}\right)=\left(S'_{\alpha,0},S'_{\alpha,1},b_{\alpha,2}\right)$.

Iterating this argument we have the desired result.
\end{proof}
\begin{rem}
In Theorem \ref{P.deu}, if we choose identity polynomial map in each
$F_{i}$, then we have $\mathfrak{G}_{l,k}\left(S_{\alpha,j}^{<\left(B_{j}\right)}\right)\in C_{i}$
for all $j\in\left\{ 1,2,\ldots,m\right\} $.
\end{rem}

\begin{cor}
\label{mono}Let $n\in\mathbb{N}$ and $\left(l,k\right)=\left(1,0\right)$.
Take $\left\langle a_{j}\right\rangle _{j=1}^{n}\subseteq\mathbb{N}$.
Then, for any finite partition of $\mathbb{Z}$, there exist $x,b_{1},b_{2},\ldots,b_{k}$
such that 
\[
\left\{ x,b_{1},b_{2},\ldots,b_{k},\prod_{i=1}^{k}b_{i},\left\{ x\cdot\left(a_{j}^{\sum_{i=1}^{k}b_{i}}\right)\right\} _{j=1}^{n}\right\} 
\]
 is monochromatic.
\end{cor}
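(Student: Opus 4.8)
The plan is to obtain this as a specialization of Theorem \ref{P.deu} with Deuber parameter $m=1$, choosing the polynomials in $F_{1}$ to encode the maps $y\mapsto a_{j}^{\,y}$. First recall that with $(l,k)=(1,0)$ the operation degenerates to ordinary multiplication, $a\varoast_{1,0}b=ab$, so that $\mathfrak{G}_{1,0}(c_{1},\ldots,c_{s})=\prod_{t=1}^{s}c_{t}$ and the identity element $-\frac{k-1}{l}$ equals $1$. Thus $(\mathbb{Z},\varoast_{1,0})$ is the multiplicative semigroup of $\mathbb{Z}$, and the level-$0$ entries $\mathfrak{G}_{1,0}(S_{\alpha,0}^{<(B_{0})})$ of the set $D(1,\vec{F},\varoast_{1,0},S_{\alpha}^{<},B)$ are precisely the finite products $\prod_{t\in\alpha}x_{t,0}$ of the first base sequence.

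Next I would fix $F_{1}=\{f_{0},f_{1},\ldots,f_{n}\}\subseteq\mathbb{P}\left((\mathbb{Z},+),(\mathbb{Z},\varoast_{1,0})\right)$, where $f_{0}(y)=1$ is the constant map to the identity (a degree-$0$ polynomial with $f_{0}(0)=1$) and $f_{j}(y)=a_{j}^{(y)}=a_{j}^{\,y}$ for $1\le j\le n$ (degree-$1$ polynomials with $f_{j}(0)=1$, lying in $\mathbb{P}$ exactly by the discussion preceding Theorem \ref{C.S.T.}). Applying Theorem \ref{P.deu} with this $F_{1}$ yields a cell $C_{i_{0}}$, base sequences $B_{0}=\{x_{n,0}\}$, $B_{1}=\{x_{n,1}\}$ and an $IP$-set $\langle S_{\alpha}\rangle$ with $S_{\alpha}=(S_{\alpha,0},S_{\alpha,1})$ such that $D(1,\vec{F},\varoast_{1,0},S_{\alpha}^{<},B)\subseteq C_{i_{0}}$ for every $\alpha\in\mathcal{P}_{f}(\mathbb{N})$. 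I then read off the required elements: letting $\alpha$ range over singletons and then over one full block at level $0$ produces the generators $x_{t,0}$ and their product; fixing one $\alpha_{0}$ at level $1$ produces $x:=\mathfrak{G}_{1,0}(S_{\alpha_{0},1}^{<(B_{1})})=\prod_{t\in\alpha_{0}}x_{t,1}$ via $f_{0}$ (this is the content of the remark just after Theorem \ref{P.deu}) together with $x\varoast_{1,0}a_{j}^{\,S_{\alpha_{0},0}}=x\cdot a_{j}^{\,S_{\alpha_{0},0}}$ via $f_{j}$.

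The one point that needs care, and the main obstacle, is to match the exponent $S_{\alpha_{0},0}$ with the additive sum $\sum_{i=1}^{k}b_{i}$ of the chosen generators. This forces me to unwind the construction inside the proof of Theorem \ref{P.deu}: there the first coordinate of the $IP$-set is produced as a sub-$IP$-set $\mathrm{FS}\left(\langle y_{n,0}\rangle\right)$ of the \emph{additive} finite sums of $B_{0}$, so that $S_{t,0}=y_{t,0}=\sum_{s\in H_{t}}x_{s,0}$ for suitable increasing blocks $H_{t}$. Taking $\alpha_{0}=\{1\}$, setting $k=|H_{1}|$, and relabelling $\{x_{s,0}:s\in H_{1}\}$ as $b_{1},\ldots,b_{k}$, I obtain $S_{\alpha_{0},0}=y_{1,0}=\sum_{s\in H_{1}}x_{s,0}=\sum_{i=1}^{k}b_{i}$, while the level-$0$ entries for the subsets of $H_{1}$ already place each $b_{i}$ and $\prod_{i=1}^{k}b_{i}$ in $C_{i_{0}}$. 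Under these identifications the two level-$1$ entries are exactly $x$ and $x\cdot a_{j}^{\sum_{i=1}^{k}b_{i}}$, so the whole displayed set lies in the single cell $C_{i_{0}}$ and is monochromatic; one only needs to keep the base sequence inside $\mathbb{N}$ so that $S_{\alpha_{0},0}>0$ and each $a_{j}^{\,S_{\alpha_{0},0}}$ is a genuine integer.
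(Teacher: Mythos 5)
Your proposal is correct and follows essentially the same route as the paper: specialize Theorem \ref{P.deu} to $m=1$ with $F_{1}$ consisting of the maps $a_{j}^{(x)}$, take $\alpha=\{1\}$, and unwind $S_{\{1\},0}$ as an additive sum of elements of the base sequence $B_{0}$ so that the exponent becomes $\sum_{i=1}^{k}b_{i}$. Your explicit inclusion of the constant (identity) polynomial $f_{0}$ to force $x$ itself into the cell is a small point of rigor the paper handles only implicitly via the remark after Theorem \ref{P.deu}, but it is the same argument.
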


\begin{proof}
Let $F_{1}=\left\{ a_{j}^{\left(x\right)}:1\leq i\leq n\right\} $.
Then, there exist base sequences $B_{0},B_{1}$ and an $IP$-set $\left\langle S_{\alpha}\right\rangle _{\alpha\in\mathcal{P}_{f}\left(\mathbb{N}\right)}$
in $\left(\mathbb{Z}^{2},+\right)$ with the above property. 

Let $S_{\left\{ 1\right\}}=\left(S_{\left\{ 1\right\} ,0},S_{\left\{ 1\right\} ,1}\right)$.
Then, $S_{\left\{ 1\right\} ,0}=\sum_{i=1}^{k}b_{i}$ and $S_{\left\{ 1\right\} ,1}=\sum_{s=1}^{m}c_{s}$,
where 
\[
\left\{ b_{1},b_{2},\ldots,b_{k}\right\} \subseteq B_{0},\,\left\{ c_{1},c_{2},\ldots,c_{m}\right\} \subseteq B_{1}.
\]

Now, 
\[
\mathfrak{G}_{1,0}\left(S_{\left\{ 1\right\} ,0}^{<\left(B_{0}\right)}\right)=\mathfrak{G}_{1,0}\left(b_{1},b_{2},\ldots,b_{k}\right)=b_{1}b_{2}\cdots b_{k}.
\]
 Similarly, 
\[
\mathfrak{G}_{1,0}\left(S_{\left\{ 1\right\} ,1}^{<\left(B_{1}\right)}\right)=\mathfrak{G}_{1,0}\left(c_{1},c_{2},\ldots,c_{s}\right)=c_{1}c_{2}\cdots c_{s}.
\]

So, 

$\mathfrak{G}_{1,0}\left(S_{\left\{ 1\right\} ,1}^{<\left(B_{1}\right)},a_{j}^{\left(S_{1,0}\right)}\right)$

$=\mathfrak{G}_{1,0}\left(c_{1},c_{2},\ldots,c_{s},a_{j}^{\left(b_{1}+b_{2}+\cdots+b_{k}\right)}\right)$

$=\mathfrak{G}_{1,0}\left(c_{1},c_{2},\ldots,c_{s}\right)\varoast_{1,0}\mathfrak{G}_{1,0}\left(a_{j}^{\left(b_{1}+b_{2}+\cdots+b_{k}\right)}\right)$

$=c_{1}c_{2}\cdots c_{s}\cdot a_{j}^{\sum_{i=1}^{k}b_{i}}$ for all
$j\in\left\{ 1,\ldots,n\right\} $.

Let, $x=c_{1}c_{2}\cdots c_{s}$. Then the following configuration
\[
\left\{ x,b_{1},b_{2},\ldots,b_{k},\prod_{i=1}^{k}b_{i},\left\{ x\cdot\left(a_{j}^{\sum_{i=1}^{k}b_{i}}\right)\right\} _{j=1}^{n}\right\} 
\]
 is monochromatic.
\end{proof}
\begin{example}
For $1\leq j\leq n$, choose $a_{j}=j$. Then, by Corollary \ref{mono},
the following configuration 
\[
\left\{ x,b_{1},b_{2},\ldots,b_{k},\prod_{i=1}^{k}b_{i},\left\{ x\cdot\left(j^{\sum_{i=1}^{k}b_{i}}\right)\right\} _{j=1}^{n}\right\} 
\]
 is monochromatic.

i.e; 
\[
\left\{ x,b_{1},b_{2},\ldots,b_{k},\prod_{i=1}^{k}b_{i},x\cdot2^{\sum_{i=1}^{k}b_{i}},\ldots,x\cdot n^{\sum_{i=1}^{k}b_{i}}\right\} 
\]
 is monochromatic.
\end{example}

\begin{example}
For any finite partition of $\mathbb{Z}=\bigcup_{i=1}^{r}C_{i}$,
there exist $m,n,p\in\mathbb{N}$ and finite sequences $\left\langle a_{i}\right\rangle _{i=1}^{m}$,
$\left\langle b_{j}\right\rangle _{j=1}^{n}$ and $\left\langle c_{q}\right\rangle _{q=1}^{p}$
such that the following configuration 
\begin{align*}
\left\{ \prod_{i=1}^{m}a_{i},\prod_{j=1}^{n}b_{j},\prod_{q=1}^{p}c_{q},\prod_{j=1}^{n}b_{j}\cdot2^{\sum_{i=1}^{m}a_{i}},\prod_{j=1}^{n}b_{j}\cdot3^{\sum_{i=1}^{m}a_{i}},\right.\\
\left.\prod_{q=1}^{p}c_{q}\cdot2^{\left(\sum_{i=1}^{m}a_{i}\right)\left(\sum_{j=1}^{n}b_{j}\right)},\prod_{q=1}^{p}c_{q}\cdot3^{\left(\sum_{i=1}^{m}a_{i}\right)\left(\sum_{j=1}^{n}b_{j}\right)}\right\} 
\end{align*}
 is monochromatic.
\end{example}

\begin{proof}
Let $\left(l,k\right)=\left(1,0\right)$. Consider the following finite
family of functions $\left\{ F_{1},F_{2}\right\} $, where 
\[
F_{1}=\left\{ 1^{\left(x\right)},2^{\left(x\right)},3^{\left(x\right)}\right\} ,F_{2}=\left\{ 1^{\left(xy\right)},2^{\left(xy\right)},3^{\left(xy\right)}\right\} .
\]
Now use Theorem \ref{P.deu} for the above set of functions. So, there
exist three base sequences $B_{0},B_{1}$ and $B_{2}$ from Theorem
\ref{P.deu} and choose $\alpha=\left\{ 1\right\} $. Let, $m_{1},n_{1}\in\mathbb{N}$
and by letting $S_{\left\{ 1\right\} ,0}=\sum_{m_{1}}^{n_{1}}x_{i}$,
where $\left\{ x_{i}\right\} _{i=m_{1}}^{n_{1}}\subset B_{0}$, we
have

$\mathfrak{G}_{1,0}\left(S_{\left\{ 1\right\} ,0}^{<\left(B_{0}\right)}\right)$

$=\mathfrak{G}_{1,0}\left(x_{m_{1}},\ldots,x_{n_{1}}\right)$

$=x_{m_{1}}\cdots x_{n_{1}}$.

Again, let $m_{2},n_{2}\in\mathbb{N}$ and by letting $S_{\left\{ 1\right\} ,1}=\sum_{m_{2}}^{n_{2}}y_{j}$,
where $\left\{ y_{j}\right\} _{j=m_{2}}^{n_{2}}\subset B_{1}$, we
have

$\mathfrak{G}_{1,0}\left(S_{\left\{ 1\right\} ,1}^{<\left(B_{1}\right)},f\left(S_{\left\{ 1\right\} ,0}\right)\right)$,
where $f\in F_{1}$

$=\mathfrak{G}_{1,0}\left(y_{m_{2}},\ldots,y_{n_{2}},f\left(\sum_{i=m_{1}}^{n_{1}}x_{i}\right)\right)$

$=\left\{ \begin{array}{cc}
y_{m_{2}}\cdots y_{n_{2}} & \text{if, }f=1^{\left(x\right)}\\
y_{m_{2}}\cdots y_{n_{2}}\cdot2^{\sum_{m_{1}}^{n_{1}}x_{i}} & \text{if, }f=2^{\left(x\right)}\\
y_{m_{2}}\cdots y_{n_{2}}\cdot3^{\sum_{m_{1}}^{n_{1}}x_{i}} & \text{if, }f=3^{\left(x\right)}
\end{array}\right.$.

Also, let $m_{3},n_{3}\in\mathbb{N}$ and by letting $S_{\left\{ 1\right\} ,2}=\sum_{m_{3}}^{n_{3}}z_{3}$,
where $\left\{ z_{k}\right\} _{k=m_{3}}^{n_{3}}\subset B_{2}$, we
have

$\mathfrak{G}_{1,0}\left(S_{\left\{ 1\right\} ,2}^{<\left(B_{2}\right)},f\left(S_{\left\{ 1\right\} ,0},S_{\left\{ 1\right\} ,1}\right)\right)$,
where $f\in F_{2}$

$=\mathfrak{G}_{1,0}\left(z_{m_{3}},\ldots,z_{n_{3}},f\left(\sum_{i=m_{1}}^{n_{1}}x_{i},\sum_{m_{2}}^{n_{2}}y_{j}\right)\right)$

$=\left\{ \begin{array}{cc}
z_{m_{3}}\cdots z_{n_{3}} & \text{if, }f=1^{\left(xy\right)}\\
z_{m_{3}}\cdots z_{n_{3}}\cdot2^{\left(\sum_{i=m_{1}}^{n_{1}}x_{i}\right)\left(\sum_{m_{2}}^{n_{2}}y_{j}\right)} & \text{if, }f=2^{\left(xy\right)}\\
z_{m_{3}}\cdots z_{n_{3}}\cdot3^{\left(\sum_{i=m_{1}}^{n_{1}}x_{i}\right)\left(\sum_{m_{2}}^{n_{2}}y_{j}\right)} & \text{if, }f=3^{\left(xy\right)}
\end{array}\right.$.

Now replace $\left\{ x_{i}\right\} _{i=m_{1}}^{n_{1}}$ by $\left\{ a_{i}\right\} _{i=1}^{m1}$,
$\left\{ y_{j}\right\} _{j=m_{2}}^{n_{2}}$ by $\left\{ b_{j}\right\} _{j=1}^{n}$
and $\left\{ z_{k}\right\} _{k=m_{3}}^{n_{3}}$by $\left\{ c_{q}\right\} _{q=1}^{p}$
to obtain the desired configuration.
\end{proof}

\section{A New Approach to Additive and Multiplicative Operation}

We will now introduce and study two binary operations, additive and
multiplicative and their iterative versions, which are associative.
Also, we will provide some examples. These operations are much complicated
to handle and in this article we will only use the Hales-Jewett theorem.

\subsection{ A new additive operation}

Let $n\in\mathbb{N}$ and let us define the function $f:\mathbb{N}\rightarrow\omega$,
by $f\left(n\right)=\max\left\{ x:2^{x}\vert n\right\} $. As, each
$n\in\mathbb{N}$ can be written as $n=2^{x_{n}}\left(2y_{n}-1\right)$
in a unique way, where $x_{n}=f\left(n\right)$, itt is easy to verify
that the function $\varphi:\mathbb{N}\rightarrow\omega\times\mathbb{N}$,
defined by 
\[
\varphi\left(n\right)=\left(x_{n},y_{n}\right)=\left(f\left(n\right),\frac{1}{2}\left(\frac{n}{2^{f\left(n\right)}}+1\right)\right),
\]
is a bijection. 

Take the commutative semigroup $\left(\omega\times\mathbb{N},+\right)$,
where the operation $+$ is defined as $\left(a,b\right)+\left(c,d\right)=\left(a+b,c+d\right)$.
Then, the bijection $\varphi$ induces an associative operation $\oplus$
on $\mathbb{N}$, defined by, 
\[
p=m\oplus n
\]
\[
\text{if and only if},\ \varphi\left(p\right)=\varphi\left(m\right)+\varphi\left(n\right)
\]

\[
\text{if and only if},\ \varphi\left(p\right)=\left(x_{m}+x_{n},y_{m}+y_{n}\right)
\]

\[
\text{if and only if},\ p=2^{x_{m}+x_{n}}\left(2\left(y_{m}+y_{n}\right)-1\right)
\]

\[
\text{if and only if},\ p=2^{f\left(m\right)+f\left(n\right)}\left(\frac{m}{2^{f\left(m\right)}}+1+\frac{n}{2^{f\left(n\right)}}+1-1\right)
\]

\[
\text{if and only if},\ p=2^{f\left(m\right)+f\left(n\right)}\left(\frac{m}{2^{f\left(m\right)}}+\frac{n}{2^{f\left(n\right)}}+1\right).
\]
So, we have $m\oplus n=2^{f\left(m\right)+f\left(n\right)}\left(\frac{m}{2^{f\left(m\right)}}+\frac{n}{2^{f\left(n\right)}}+1\right)$. 

It can be easily seen that 
\[
a_{1}\oplus a_{2}\oplus\cdots\oplus a_{n}=2^{\sum_{i=1}^{n}f\left(a_{i}\right)}\left(\sum_{i=1}^{n}\frac{a_{i}}{2^{f\left(a_{i}\right)}}+\left(n-1\right)\right).
\]

We will now use the Hales-Jewett theorem to deduce some new Ramsey
theoretic configurations.
\begin{thm}
Let $r\in\mathbb{N}$ and $a_{1},a_{2},\ldots,a_{n}$ are distinct
natural numbers. Then for every $r$-partition of $\mathbb{N}$, there
exist $x,y$ and $c$ in $\mathbb{N}$ such that 
\[
\left\{ x2^{cf\left(a\right)}\left(y+c\frac{a}{2^{f\left(a\right)}}\right):a\in\left\{ a_{1},a_{2},\ldots,a_{n}\right\} \right\} 
\]
 is monochromatic.
\end{thm}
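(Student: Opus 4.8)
The plan is to invoke the Hales--Jewett Theorem, exploiting that $\varphi$ is an isomorphism from $(\mathbb{N},\oplus)$ onto the semigroup $(\omega\times\mathbb{N},+)$, so that an iterated $\oplus$-product is converted into coordinatewise addition. First I would take the alphabet $\mathbb{A}=\{a_1,a_2,\ldots,a_n\}$ (these are distinct, so $|\mathbb{A}|=n$) and fix $N=\max\{\text{HJ}(r,n),2\}$. Given the $r$-partition $\mathbb{N}=C_1\cup\cdots\cup C_r$, I would color each word $w=c_1c_2\cdots c_N\in\mathbb{A}^N$ by the color of its $\oplus$-product $c_1\oplus c_2\oplus\cdots\oplus c_N\in\mathbb{N}$. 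By the Hales--Jewett Theorem this coloring admits a monochromatic combinatorial line, that is, a variable word whose set of variable coordinates $\gamma$ I abbreviate by $c=|\gamma|\ge 1$, the remaining coordinates carrying fixed letters from $\mathbb{A}$; the monochromatic line is $\{w(a):a\in\mathbb{A}\}$.

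Next I would translate this line into the stated configuration. Let $P$ denote the $\oplus$-product of the letters occupying the fixed coordinates (the empty product when every coordinate is variable). For each $a\in\mathbb{A}$, the word $w(a)$ has the letter $a$ in all $c$ variable coordinates, so its $\oplus$-product is $P\oplus\underbrace{a\oplus\cdots\oplus a}_{c}$. Applying the homomorphism $\varphi$ and writing $\varphi(P)=(X,Y)$ and $\varphi(a)=\big(f(a),\tfrac12(\tfrac{a}{2^{f(a)}}+1)\big)$, additivity gives
\[
\varphi\big(w(a)\big)=\Big(X+cf(a),\ Y+\tfrac{c}{2}\big(\tfrac{a}{2^{f(a)}}+1\big)\Big).
\]
Inverting $\varphi$ then expresses the $\oplus$-product of $w(a)$ as $2^{X+cf(a)}\big(2Y-1+c+c\,\tfrac{a}{2^{f(a)}}\big)$, which is precisely $x\,2^{cf(a)}\big(y+c\,\tfrac{a}{2^{f(a)}}\big)$ once we set $x=2^{X}$ and $y=2Y-1+c$. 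Since the combinatorial line is monochromatic, the family $\big\{x\,2^{cf(a)}(y+c\,\tfrac{a}{2^{f(a)}}):a\in\{a_1,\ldots,a_n\}\big\}$ lies in a single $C_i$.

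It remains to check $x,y,c\in\mathbb{N}$, which is the only delicate bookkeeping. Here $c=|\gamma|\ge 1$ and $x=2^{X}$ with $X=\sum f(b)\ge 0$ are immediate. For $y=2Y-1+c$, I would observe that every fixed letter $b$ has odd part $\tfrac{b}{2^{f(b)}}\ge 1$, so each fixed coordinate contributes at least $1$ to $Y$ and hence $y\ge 1+c\ge 2$; in the degenerate case of no fixed coordinate one has $Y=0$, $c=N$, so $y=N-1\ge 1$ by the choice $N\ge 2$. The main (and essentially the only) obstacle is the careful propagation of both coordinates of $\varphi$ through the iterated operation; once the homomorphism property is used, the rest is the short linear computation displayed above.
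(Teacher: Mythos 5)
Your proposal is correct and follows essentially the same route as the paper: apply the Hales--Jewett theorem to the alphabet $\{a_1,\ldots,a_n\}$ with the coloring of $\mathbb{A}^N$ induced by the iterated $\oplus$-product, then read off $x$, $y$, $c$ from the fixed and variable coordinates of the monochromatic line. Your write-up is in fact more careful than the paper's (which omits the extraction of $x$ and $y$ and the degenerate all-variable case), and your bookkeeping via the isomorphism $\varphi$ checks out against the closed-form formula for $a_1\oplus\cdots\oplus a_N$.
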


\begin{proof}
Let $\mathbb{A}=\left\{ a_{1},a_{2},\ldots,a_{n}\right\} $ and $r\in\mathbb{N}$.
Then choose the Hales-Jewett number $N=N\left(\mathbb{A},r\right)$.

Now consider the word space $\mathbb{A}^{N}$ and take the correspondence
map $g:\mathbb{A}^{N}\rightarrow\mathbb{N}$ defined by,

$g\left(a_{1},a_{2},\ldots,a_{n}\right)$

$=a_{1}\oplus a_{2}\oplus\cdots\oplus a_{n}$

$=2^{\sum_{i=1}^{n}f\left(a_{i}\right)}\left(\sum_{i=1}^{n}\frac{a_{i}}{2^{f\left(a_{i}\right)}}+\left(n-1\right)\right)$. 

Now every $r$-partition on $\mathbb{N}$ induces a $r$-partition
on $\mathbb{A}^{N}$. Then from Hales-Jewett theorem and above configuration,
there exist $c,x,y\in\mathbb{N}$ such that 
\[
\left\{ x2^{cf\left(a\right)}\left(y+c\frac{a}{2^{f\left(a\right)}}\right):a\in\mathbb{A}\right\} 
\]
 is monochromatic.
\end{proof}
The following three facts can be deduced from Hales-Jewett theorem
directly, but here we provide these as an application of our theorem.
\begin{fact}
Let $r\in\mathbb{N}$. For any $l\in\mathbb{N}$, choose $F=\left\{ 2,2^{2},\ldots,2^{l}\right\} $.
Denote $a_{i}=2^{i}$, for $i\in\left\{ 1,2,\ldots,l\right\} $. Then
$f\left(a_{i}\right)=i$ and $\frac{a_{i}}{2^{f\left(a_{i}\right)}}=1$.
Then from the above theorem, there exist $x,y,c\in\mathbb{N}$ such
that
\[
x2^{cf\left(a_{i}\right)}\left(y+c\frac{a_{i}}{2^{f\left(a_{i}\right)}}\right)=x2^{ci}\left(y+c\right)=ab^{i},
\]
 for all $i\in\left\{ 1,2,\ldots,l\right\} $, where $a=x\left(y+c\right),b=2^{c}$.
So, we have the geometric progressions.
\end{fact}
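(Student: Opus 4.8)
The plan is to read off this Fact as a direct specialization of the preceding theorem, so that the entire argument reduces to two routine $2$-adic computations followed by a reparametrization. First I would record that for $a_i = 2^i$ the function $f$, being the $2$-adic valuation $f(n) = \max\{x : 2^x \mid n\}$, satisfies $f(a_i) = f(2^i) = i$, since $2^x \mid 2^i$ precisely when $x \leq i$. Consequently the odd part is $a_i/2^{f(a_i)} = 2^i/2^i = 1$. In particular the $a_i$ are pairwise distinct, which is exactly the hypothesis needed to invoke the theorem with the chosen alphabet.

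Next I would apply the preceding theorem with $\mathbb{A} = F = \{2, 2^2, \ldots, 2^l\}$ and the given $r$. This produces $x, y, c \in \mathbb{N}$ such that the set $\left\{ x2^{cf(a)}\left(y + c\, a/2^{f(a)}\right) : a \in \mathbb{A}\right\}$ is monochromatic. Substituting the values $f(a_i) = i$ and $a_i/2^{f(a_i)} = 1$ computed above rewrites the $i$-th element of this set as
\[
x2^{cf(a_i)}\left(y + c\,\frac{a_i}{2^{f(a_i)}}\right) = x2^{ci}(y+c).
\]

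Finally I would set $a = x(y+c)$ and $b = 2^c$, both in $\mathbb{N}$, so that the $i$-th element becomes $ab^i = x(y+c)(2^c)^i = x2^{ci}(y+c)$, matching the expression in the statement. As $i$ ranges over $\{1, 2, \ldots, l\}$ this is precisely the $l$-term geometric progression with first term $ab$ and common ratio $b$, and it is monochromatic by construction; this is how the new operation $\oplus$ recovers monochromatic geometric progressions.

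The argument carries no genuine obstacle, since it is purely an application plus substitution. The only points that deserve a moment's care are checking that the theorem's conclusion indeed ranges over the full index set $i \in \{1, \ldots, l\}$, so that the resulting progression has the intended length $l$, and that the reparametrization $a = x(y+c)$, $b = 2^c$ lands inside $\mathbb{N}$; both are immediate from $x, y, c \in \mathbb{N}$.
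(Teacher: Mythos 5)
Your proposal is correct and follows essentially the same route as the paper, which treats this Fact as a direct specialization of the preceding theorem: compute $f(2^i)=i$ and the odd part $2^i/2^{f(2^i)}=1$, invoke the theorem with $\mathbb{A}=\{2,2^2,\ldots,2^l\}$, and reparametrize via $a=x(y+c)$, $b=2^c$ to exhibit the geometric progression. The only additions you make — checking the $a_i$ are distinct and that $a,b\in\mathbb{N}$ — are harmless and consistent with the paper's intent.
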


\begin{fact}
Let $n,r\in\mathbb{N}$ and $p_{1},p_{2},\ldots,p_{n}$ be different
odd primes. Then for any $r$-partition of $\mathbb{N}$, from the
above theorem there exist $c,x,y\in$ $\mathbb{N}$ such that the
following configuration
\[
\left\{ xy+xcp_{1},xy+xcp_{2},\ldots,xy+xcp_{n}\right\} ,
\]
is monochromatic as $f\left(p_{i}\right)=0$ and $\frac{p_{i}}{2^{f\left(p_{i}\right)}}=p_{i}$.
\end{fact}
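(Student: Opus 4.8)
The plan is to obtain this Fact as an immediate specialization of the preceding theorem, taking the distinct natural numbers $a_1,\ldots,a_n$ to be the distinct odd primes $p_1,\ldots,p_n$. Distinct primes are in particular distinct natural numbers, so the hypothesis of that theorem is satisfied, and it furnishes $x,y,c\in\mathbb{N}$ for which the set $\left\{x2^{cf(a)}\left(y+c\frac{a}{2^{f(a)}}\right):a\in\{p_1,\ldots,p_n\}\right\}$ is monochromatic. The whole task is then to evaluate this expression at $a=p_i$.

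First I would compute the $2$-adic valuation $f$ on each prime. Recall $f(m)=\max\{x:2^x\mid m\}$. Since each $p_i$ is an \emph{odd} prime, we have $2\nmid p_i$, so the largest power of $2$ dividing $p_i$ is $2^0$; hence $f(p_i)=0$ and consequently $\frac{p_i}{2^{f(p_i)}}=\frac{p_i}{1}=p_i$. This is the only genuine verification in the argument, and it is where the hypothesis ``odd'' is used.

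Next I would substitute these two values into the monochromatic configuration. Setting $f(p_i)=0$ collapses the prefactor $2^{cf(a)}$ to $2^0=1$, and replacing $\frac{p_i}{2^{f(p_i)}}$ by $p_i$ turns each term $x2^{cf(a)}\left(y+c\frac{a}{2^{f(a)}}\right)$ into $x\bigl(y+cp_i\bigr)=xy+xcp_i$. Therefore the set $\left\{xy+xcp_i:1\le i\le n\right\}$ inherits monochromaticity directly from the theorem, which is exactly the claimed configuration.

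Since every step is a direct evaluation of a formula that has already been proved monochromatic, there is no real obstacle to overcome: the combinatorial content resides entirely in the Hales--Jewett argument behind the preceding theorem. The purpose of this Fact, as I read it, is illustrative, namely to show that restricting the new operation $\oplus$ to inputs on which the $2$-adic valuation vanishes recovers a clean affine (Schur-type) pattern $xy+xcp_i$; accordingly my write-up would be short and would simply highlight the computation $f(p_i)=0$ rather than reconstruct any machinery.
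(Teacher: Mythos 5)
Your proposal is correct and follows exactly the paper's own route: the Fact is obtained by specializing the preceding theorem to $a_i=p_i$, observing that $f(p_i)=0$ for odd primes so that $2^{cf(p_i)}=1$ and $\frac{p_i}{2^{f(p_i)}}=p_i$, whence each term collapses to $x(y+cp_i)=xy+xcp_i$. The paper's justification is precisely this one-line evaluation, so there is nothing to add.
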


\begin{fact}
Let $n,r\in\mathbb{N}$ and $p$ be an odd prime. Let $p,p^{2},\ldots,p^{n}$.
Then $f\left(p^{i}\right)=0$ and $\frac{p^{i}}{2^{f\left(p^{i}\right)}}=p^{i}$
for all $i\in\left\{ 1,2,\ldots,n\right\} $. Then for any $r$-partition
of $\mathbb{N}$, from the above theorem there exist $c,x,y\in$ $\mathbb{N}$
such that the following configuration
\[
\left\{ xy+xcp,xy+xcp^{2},\ldots,xy+xcp^{n}\right\} 
\]
is monochromatic.
\end{fact}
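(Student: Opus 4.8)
The plan is to read this Fact off directly as a special case of the preceding theorem on the $\oplus$-operation, instantiated at $a_i = p^i$ for $i = 1, \ldots, n$. First I would verify that the hypotheses of that theorem are satisfied: since $p$ is an odd prime we have $p \geq 3$, so the powers $p, p^2, \ldots, p^n$ are $n$ \emph{distinct} natural numbers and thus form an admissible tuple $a_1, \ldots, a_n$. The theorem then supplies, for any $r$-partition of $\mathbb{N}$, elements $x, y, c \in \mathbb{N}$ making the set $\{\, x\,2^{cf(a)}(y + c\,a/2^{f(a)}) : a \in \{p, p^2, \ldots, p^n\}\,\}$ monochromatic.

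Next I would evaluate the two quantities $f(a)$ and $a/2^{f(a)}$ occurring in that configuration at $a = p^i$. Because $p$ is odd, every power $p^i$ is odd, so the largest power of $2$ dividing $p^i$ is $2^0$; hence $f(p^i) = 0$, and consequently $p^i/2^{f(p^i)} = p^i$. These are exactly the two elementary identities already recorded in the statement of the Fact.

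Finally I would substitute these values into the monochromatic set produced by the theorem. For each $a = p^i$ the general term collapses as
\[
x \cdot 2^{c f(a)}\left(y + c\,\frac{a}{2^{f(a)}}\right) = x \cdot 2^{0}\left(y + c\,p^{i}\right) = xy + xc\,p^{i},
\]
so the guaranteed monochromatic set is precisely $\{\,xy + xcp,\ xy + xcp^2,\ \ldots,\ xy + xcp^n\,\}$, as claimed. I do not expect any genuine obstacle here: the whole argument is a direct specialization of the $\oplus$-theorem, and the only point requiring verification is the trivial $2$-adic computation $f(p^i) = 0$, which is immediate from the oddness of $p$.
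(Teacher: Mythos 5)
Your proposal is correct and is essentially identical to the paper's own (very brief) justification: both simply specialize the preceding $\oplus$-theorem to the distinct odd numbers $a_i = p^i$, observe $f(p^i)=0$ hence $p^i/2^{f(p^i)}=p^i$, and substitute to collapse the general term $x\,2^{cf(a)}\bigl(y+c\,a/2^{f(a)}\bigr)$ to $xy+xcp^i$. Your added remark that $p\geq 3$ guarantees the $p^i$ are distinct is a small point the paper leaves implicit, but the argument is the same.
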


The following examples are explicitly showing our results in a simple
way.
\begin{example}
Let us consider the numbers $pq,2p,2q$, where $p,q$ are odd primes.
Now $f\left(pq\right)=0,f\left(2p\right)=f\left(2q\right)=2$ and
$\frac{pq}{2^{f\left(pq\right)}}=pq,$$\frac{p}{2^{f\left(p\right)}}=p$
and $\frac{q}{2^{f\left(q\right)}}=q$.

Then for any $r$-partition of $\mathbb{N}$, from the above theorem
there exist $c,x,y\in$ $\mathbb{N}$ such that the following monochromatic
configuration:
\[
\left\{ x\left(y+cpq\right),2^{c}x\left(y+cp\right),2^{c}x\left(y+cq\right)\right\} .
\]
\end{example}

\begin{example}
Let us consider the numbers $2^{3}p,p^{3}$, where $p$ is an odd
prime. Now $f\left(p^{3}\right)=0,f\left(2^{3}p\right)=3$ and $\frac{p^{3}}{2^{f\left(p^{3}\right)}}=p^{3},$$\frac{2^{3}p}{2^{f\left(2^{3}p\right)}}=p$.
Then for any $r$-partition of $\mathbb{N}$, from the above theorem
there exist $c,x,y\in$ $\mathbb{N}$ such that the following monochromatic
configuration:
\[
\left\{ x\left(y+cp^{3}\right),x2^{3c}\left(y+cp\right)\right\} .
\]
Taking $a=xy,b=xc$ we can say for any $r$-partition of $\mathbb{N}$,
from the above theorem there exist $c,a,b\in$ $\mathbb{N}$ such
that the following monochromatic configuration:
\[
\left\{ a8^{c}+bp,a+bp^{3}\right\} .
\]
\end{example}

\begin{example}
Similarly considering the numbers $2p^{2},4p^{4}$, where $p$ is
a prime, we see that for any $r$-partition of $\mathbb{N}$, from
the above theorem there exist $c,a,b\in$ $\mathbb{N}$ such that
the following monochromatic configuration:
\[
\left\{ 2^{c}\left(a+bp^{2}\right),2^{2c}\left(a+bp^{4}\right)\right\} .
\]
\end{example}

Taking different numbers, one can deduce many different configurations.

\subsection{ A new multiplicative operation}

Let us take the set $\omega\times2\mathbb{N}$ and the binary operation
$\cdot$ on $\omega\times2\mathbb{N}$, where the operation $\cdot$
is defined as $\left(a,2b\right)\cdot\left(c,2d\right)=\left(a\cdot b,2^{2}c\cdot d\right)$.
Then, with this operation $\left(\omega\times2\mathbb{N},\cdot\right)$
forms a commutative semigroup. 

Now, We know that each $n\in\mathbb{N}$ can be written as $n=2^{x_{n}}\left(2y_{n}-1\right)$
in a unique way, where $x_{n}=f\left(n\right)$. So, the function
$\rho:\mathbb{N}\rightarrow\omega\times2\mathbb{N}$ defined by 
\[
\rho\left(n\right)=\left(x_{n},2y_{n}\right)=\left(f\left(n\right),\frac{n}{2^{f\left(n\right)}}+1\right)
\]
 is a bijection and it induces an associative operation $\otimes$
on $\mathbb{N}$, defined by, 
\[
p=m\otimes n
\]
\[
\text{if and only if},\ \varphi\left(p\right)=\varphi\left(m\right)\cdot\varphi\left(n\right)
\]

\[
\text{if and only if},\ \varphi\left(p\right)=\left(x_{m}\cdot x_{n},2y_{m}\cdot2y_{n}\right)
\]

\[
\text{if and only if},\ p=2^{x_{m}\cdot x_{n}}\left(2^{2}\left(y_{m}\cdot y_{n}\right)-1\right)
\]

\[
\text{if and only if},\ p=2^{f\left(m\right)\cdot f\left(n\right)}\left(\left(\frac{m}{2^{f\left(m\right)}}+1\right)\cdot\left(\frac{n}{2^{f\left(n\right)}}+1\right)-1\right)
\]

\[
\text{if and only if},\ p=2^{f\left(m\right)\cdot f\left(n\right)}\cdot\mathfrak{G}_{1,1}\left(\frac{m}{2^{f\left(m\right)}},\frac{n}{2^{f\left(n\right)}}\right).
\]
So, we have $m\otimes n=2^{f\left(m\right)\cdot f\left(n\right)}\cdot\mathfrak{G}_{1,1}\left(\frac{m}{2^{f\left(m\right)}},\frac{n}{2^{f\left(n\right)}}\right)$.

It can be easily verify that 
\[
a_{1}\otimes a_{2}\otimes\cdots\otimes a_{n}=2^{\prod_{i=1}^{n}f\left(a_{i}\right)}\cdot\mathfrak{G}_{1,1}\left(\frac{a_{1}}{2^{f\left(a_{1}\right)}},\frac{a_{2}}{2^{f\left(a_{2}\right)}},\ldots,\frac{a_{n}}{2^{f\left(a_{n}\right)}}\right).
\]

We will now use the Hales-Jewett theorem to deduce some new Ramsey
theoretic configurations.
\begin{thm}
Let $r\in\mathbb{N}$ and $a_{1},a_{2},\ldots,a_{n}$ are distinct
natural numbers. Then for every $r$-partition of $\mathbb{N}$, there
exist $x,y$ and $c$ in $\mathbb{N}$ such that 
\[
\left\{ x2^{f\left(a\right)^{c}}\mathfrak{G}_{1,1}\left(y,\left(\frac{a}{2^{f\left(a\right)}}\right)^{\left(c\right)}\right):a\in\left\{ a_{1},a_{2},\ldots,a_{n}\right\} \right\} 
\]
 is monochromatic.
\end{thm}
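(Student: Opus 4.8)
The plan is to treat this as the multiplicative counterpart of the preceding additive theorem, exchanging $\oplus$ for $\otimes$ and invoking the Hales-Jewett Theorem once. The engine is the closed form of the iterated operation,
\[
a_{1}\otimes\cdots\otimes a_{N}=2^{\prod_{i=1}^{N}f\left(a_{i}\right)}\cdot\mathfrak{G}_{1,1}\left(\frac{a_{1}}{2^{f\left(a_{1}\right)}},\ldots,\frac{a_{N}}{2^{f\left(a_{N}\right)}}\right),
\]
which is available because $\otimes$ is associative; every value of the map $g$ below will be read off from it.

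First I would take $\mathbb{A}=\left\{a_{1},\ldots,a_{n}\right\}$ as alphabet, choose $N=\text{HJ}\left(r,n\right)$, and define $g:\mathbb{A}^{N}\rightarrow\mathbb{N}$ by $g\left(w_{1},\ldots,w_{N}\right)=w_{1}\otimes\cdots\otimes w_{N}$. Coloring each word $w$ by the color of $g\left(w\right)$ turns the given $r$-partition of $\mathbb{N}$ into an $r$-coloring of $\mathbb{A}^{N}$, so the Hales-Jewett Theorem yields a monochromatic combinatorial line. Such a line is given by a nonempty set $S\subseteq\left[N\right]$ of variable coordinates together with fixed letters $b_{i}\in\mathbb{A}$ for $i\notin S$; its points are the words $w^{\left(a\right)}$, $a\in\mathbb{A}$, with $w^{\left(a\right)}_{i}=a$ on $S$ and $w^{\left(a\right)}_{i}=b_{i}$ off $S$. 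Set $c=\left|S\right|$.

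The heart of the argument is to evaluate $g$ on this line and isolate the dependence on $a$. Splitting the product over $\left[N\right]$ into variable and fixed coordinates, the exponent of $2$ becomes $f\left(a\right)^{c}\cdot\prod_{i\notin S}f\left(b_{i}\right)$ while the symmetric factor becomes
\[
\left(\frac{a}{2^{f\left(a\right)}}+1\right)^{c}\prod_{i\notin S}\left(\frac{b_{i}}{2^{f\left(b_{i}\right)}}+1\right)-1.
\]
Putting $y=\prod_{i\notin S}\left(\frac{b_{i}}{2^{f\left(b_{i}\right)}}+1\right)-1$ and using $\left(\frac{a}{2^{f\left(a\right)}}\right)^{\left(c\right)}+1=\left(\frac{a}{2^{f\left(a\right)}}+1\right)^{c}$ for the $\varoast_{1,1}$-power, this factor collapses to $\left(y+1\right)\left(\frac{a}{2^{f\left(a\right)}}+1\right)^{c}-1=\mathfrak{G}_{1,1}\left(y,\left(\frac{a}{2^{f\left(a\right)}}\right)^{\left(c\right)}\right)$, and the leftover power of $2$ is meant to provide the leading factor $x$. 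Since all $w^{\left(a\right)}$ share one color, so do all the values $g\left(w^{\left(a\right)}\right)$, which gives the monochromatic set.

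The hard part will be precisely this leftover power of $2$. Because the first coordinate of $\otimes$ multiplies rather than adds, the fixed coordinates contribute a multiplier $P=\prod_{i\notin S}f\left(b_{i}\right)$, so the true exponent is $P\,f\left(a\right)^{c}$ and one obtains $2^{P\,f\left(a\right)^{c}}$ in place of a clean $x\cdot2^{f\left(a\right)^{c}}$; this is the one genuine divergence from the additive proof, where the analogous contribution was the additive constant $2^{Q}$ that factored out harmlessly. To reach the stated form I would control $P$, for instance by restricting to numbers with $f\left(a_{j}\right)=1$ so that every fixed letter forces $P=1$ and the surviving factor is a true constant $x$, or else carry $P$ through and record the configuration with the honest exponent. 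Once this bookkeeping is settled, the remaining steps are only the routine expansion $\mathfrak{G}_{1,1}\left(u_{1},\ldots,u_{m}\right)=\prod_{j}\left(1+u_{j}\right)-1$ and the $\varoast_{1,1}$-power identity $z^{\left(c\right)}=\left(1+z\right)^{c}-1$ used above.
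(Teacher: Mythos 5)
Your proposal follows the paper's proof exactly: alphabet $\mathbb{A}=\{a_{1},\ldots,a_{n}\}$, the map $g$ sending a word to its iterated $\otimes$-product, one application of Hales--Jewett, and evaluation of $g$ on the monochromatic line with $c$ the number of variable positions, $y$ the $\mathfrak{G}_{1,1}$-value of the fixed letters, and the prefactor coming from the fixed positions. The ``hard part'' you flag is not a defect of your write-up but a genuine discrepancy in the paper itself: the paper sets $x=\prod_{i\notin S}f\left(b_{i}\right)$ and then asserts the value of $g$ on the line is $x\,2^{f\left(a\right)^{c}}\cdot\mathfrak{G}_{1,1}\bigl(y,(a/2^{f\left(a\right)})^{\left(c\right)}\bigr)$, whereas the closed form $a_{1}\otimes\cdots\otimes a_{N}=2^{\prod_{i}f\left(a_{i}\right)}\cdot\mathfrak{G}_{1,1}(\cdots)$ actually yields $2^{x\,f\left(a\right)^{c}}\cdot\mathfrak{G}_{1,1}\bigl(y,(a/2^{f\left(a\right)})^{\left(c\right)}\bigr)$, exactly the $2^{P f\left(a\right)^{c}}$ you computed; these agree only in degenerate cases (e.g.\ $P=1$, or $f\left(a\right)=0$ so that both powers of $2$ collapse to $1$ --- which is why the paper's corollaries for odd primes survive, up to the spurious multiplier $x$). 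So your second fix --- carrying the honest exponent $2^{x f\left(a\right)^{c}}$ --- is the one that matches what the argument proves, and the theorem's displayed configuration should be amended accordingly; your first fix (forcing $f\left(a_{j}\right)=1$ for all $j$) also works but proves a weaker statement. One further small point worth recording: you should note that $x$ and $y$ may fail to lie in $\mathbb{N}$ (e.g.\ $x=0$ when some fixed letter is odd, or $y=0$ when every position is variable), another detail the paper passes over.
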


\begin{proof}
Let $\mathbb{A}=\left\{ a_{1},a_{2},\ldots,a_{n}\right\} $ and $r\in\mathbb{N}$.
Then choose the Hales-Jewett number $N=N\left(\mathbb{A},r\right)$.

Now consider the word space $\mathbb{A}^{N}$ and take the correspondence
map $g:\mathbb{A}^{N}\rightarrow\mathbb{N}$ defined by, 

$g\left(a_{1},a_{2},\ldots,a_{N}\right)$

$=a_{1}\otimes a_{2}\otimes\cdots\otimes a_{N}$

$=2^{\prod_{i=1}^{N}f\left(a_{i}\right)}\mathfrak{G}_{1,1}\left(\frac{a_{1}}{2^{f\left(a_{1}\right)}},\frac{a_{2}}{2^{f\left(a_{2}\right)}},\ldots,\frac{a_{N}}{2^{f\left(a_{N}\right)}}\right)$.

Now every $r$-partition on $\mathbb{N}$ induces a $r$-partition
on $\mathbb{A}^{N}$. Let $c$ be the number of variable position.
Let $d=N-c$, then consider 
\[
y=\mathfrak{G}_{1,1}\left(\frac{b_{1}}{2^{f\left(b_{1}\right)}},\frac{b_{2}}{2^{f\left(b_{2}\right)}},\ldots,\frac{b_{d}}{2^{f\left(b_{d}\right)}}\right)
\]
 and $x=\prod_{i=1}^{d}f\left(b_{i}\right),$ where $b_{i}'s$ are
the in non-variable positions. Then, from the Hales-Jewett theorem
and the above expression, 
\[
\left\{ x2^{f\left(a\right)^{c}}\cdot\mathfrak{G}_{1,1}\left(y,\left(\frac{a}{2^{f\left(a\right)}}\right)^{\left(c\right)}\right):a\in\mathbb{A}\right\} 
\]
 is monochromatic.
\end{proof}
\begin{cor}
Let $n,r\in\mathbb{N}$ and $p_{1},p_{2},\ldots,p_{n}$ be different
odd primes. Then for any $r$-partition of $\mathbb{N}$, there exist
$x,z,c\in\mathbb{N}$ such that the following configuration is monochromatic:
\[
\left\{ xz\cdot\left(p_{i}+1\right)^{c}-x:i\in\left\{ 1,2,\ldots,n\right\} \right\} 
\]
\end{cor}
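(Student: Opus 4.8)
The plan is to obtain this corollary as a direct specialization of the immediately preceding theorem, applied with the distinct naturals taken to be the odd primes $p_1,p_2,\ldots,p_n$, and then to simplify the resulting monochromatic expression using the two special features that odd primes enjoy under the map $f$. First I would record that for any odd prime $p$ we have $f(p)=\max\{x:2^x\mid p\}=0$, since $p$ carries no factor of $2$, and consequently $\frac{p}{2^{f(p)}}=\frac{p}{2^0}=p$. These are the only arithmetic facts about the $p_i$ that the argument requires.

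Next I would substitute $a=p_i$ into the monochromatic set produced by the theorem, namely the collection of $x\,2^{f(a)^c}\,\mathfrak{G}_{1,1}\!\left(y,\left(\frac{a}{2^{f(a)}}\right)^{(c)}\right)$. Since the theorem yields some $c\in\mathbb{N}$ (so $c\geq 1$), the prefactor collapses as $2^{f(p_i)^c}=2^{0^c}=2^0=1$, and each element reduces to $x\,\mathfrak{G}_{1,1}\!\left(y,p_i^{(c)}\right)$, where $p_i^{(c)}$ denotes the $c$-fold $\varoast_{1,1}$-power of $p_i$.

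The computational heart of the proof is to unwind the two $\varoast_{1,1}$-expressions by means of the product formula. Because $(l,k)=(1,1)$ gives $\mathfrak{G}_{1,1}(X_1,\ldots,X_m)=\prod_{j=1}^m(X_j+1)-1$, the iterated power is $p_i^{(c)}=\mathfrak{G}_{1,1}(p_i,\ldots,p_i)=(p_i+1)^c-1$. Feeding this into the outer operation $\mathfrak{G}_{1,1}(y,w)=(y+1)(w+1)-1$ and noting that $p_i^{(c)}+1=(p_i+1)^c$, each element becomes $(y+1)(p_i+1)^c-1$. Multiplying through by $x$ gives $x(y+1)(p_i+1)^c-x$, and putting $z=y+1$ produces exactly $xz(p_i+1)^c-x$, as claimed.

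I do not anticipate any genuine obstacle here: the statement is a specialization of the preceding theorem rather than an independent result, so no new Ramsey-theoretic input is needed. The only care required is in correctly evaluating the nested $\varoast_{1,1}$-operation via the identity $\mathfrak{G}_{1,1}(X_1,\ldots,X_m)=\prod_{j=1}^m(X_j+1)-1$ and in handling the degenerate factor $2^{0^c}$, both of which are routine.
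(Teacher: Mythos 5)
Your proposal is correct and follows essentially the same route as the paper: specialize the preceding theorem to $a_i=p_i$, use $f(p_i)=0$ and $\tfrac{p_i}{2^{f(p_i)}}=p_i$ to collapse the prefactor, evaluate $\mathfrak{G}_{1,1}\left(y,p_i^{(c)}\right)=(y+1)(p_i+1)^c-1$, and set $z=y+1$. Your explicit justification of $2^{0^c}=1$ via $c\geq 1$ is a small point the paper glosses over, but the argument is identical in substance.
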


\begin{proof}
Let $n,r\in\mathbb{N}$ and $p_{1},p_{2},\ldots,p_{n}$ be different
odd primes. Then for any $r$-partition of $\mathbb{N}$, from the
above theorem there exist $c,x,y\in$ $\mathbb{N}$ such that the
following configuration
\[
\left\{ x2^{f\left(p_{i}\right)^{c}}\cdot\mathfrak{G}_{1,1}\left(y,\left(\frac{p_{i}}{2^{f\left(p_{i}\right)}}\right)^{\left(c\right)}\right):i\in\left\{ 1,2,\ldots,n\right\} \right\} 
\]
 is monochromatic. Now, $f\left(p_{i}\right)=0$ and $\frac{p_{i}}{2^{f\left(p_{i}\right)}}=p_{i}$.
So, 

$x2^{f\left(p_{i}\right)^{c}}\cdot\mathfrak{G}_{1,1}\left(y,\left(\frac{p_{i}}{2^{f\left(p_{i}\right)}}\right)^{\left(c\right)}\right)$

$=x\cdot\mathfrak{G}_{1,1}\left(y,p_{i}^{\left(c\right)}\right)$

$=x\left\{ \left(y+1\right)\left(p_{i}+1\right)^{c}-1\right\} $.

Let $z=\left(y+1\right)$. Then we can say
\[
\left\{ xz\cdot\left(p_{i}+1\right)^{c}-x:i\in\left\{ 1,2,\ldots,n\right\} \right\} 
\]
 is monochromatic.
\end{proof}
\begin{cor}
Let $n,r\in\mathbb{N}$ and $p$ be an odd prime. Then for any $r$-partition
of $\mathbb{N}$, there exist $x,z,c\in\mathbb{N}$ such that the
following configuration is monochromatic:
\[
\left\{ xz\cdot\left(p^{i}+1\right)^{c}-x:i\in\left\{ 1,2,\ldots,n\right\} \right\} 
\]
\end{cor}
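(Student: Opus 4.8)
The plan is to apply the multiplicative-operation theorem (the theorem immediately preceding the two corollaries) to the special family $a_i=p^i$ for $i\in\{1,2,\ldots,n\}$, and then simplify exactly as in the proof of the preceding corollary. First I would observe that since $p$ is an odd prime we have $p\geq 3$, so the numbers $p,p^2,\ldots,p^n$ are distinct natural numbers and the theorem applies. It yields $x,y,c\in\mathbb{N}$ such that
\[
\left\{ x2^{f\left(p^{i}\right)^{c}}\,\mathfrak{G}_{1,1}\left(y,\left(\tfrac{p^{i}}{2^{f\left(p^{i}\right)}}\right)^{\left(c\right)}\right):i\in\left\{1,2,\ldots,n\right\}\right\}
\]
is monochromatic.

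The key elementary computation is that each $p^i$ is odd, so $f\left(p^{i}\right)=\max\{x:2^{x}\mid p^{i}\}=0$. This simultaneously gives $2^{f\left(p^{i}\right)^{c}}=2^{0}=1$ and $\tfrac{p^{i}}{2^{f\left(p^{i}\right)}}=p^{i}$, so the displayed term collapses to $x\cdot\mathfrak{G}_{1,1}\left(y,\left(p^{i}\right)^{\left(c\right)}\right)$. Next I would unwind the $(1,1)$-symmetric notation using the definition of $\varoast_{1,1}$, namely $a\varoast_{1,1}b=(a+1)(b+1)-1$. Iterating this, the $\varoast_{1,1}$-power satisfies $\left(p^{i}\right)^{\left(c\right)}=\left(p^{i}+1\right)^{c}-1$, whence
\[
\mathfrak{G}_{1,1}\left(y,\left(p^{i}\right)^{\left(c\right)}\right)=(y+1)\left(\left(p^{i}+1\right)^{c}-1+1\right)-1=(y+1)\left(p^{i}+1\right)^{c}-1.
\]
Multiplying by $x$ gives $x(y+1)\left(p^{i}+1\right)^{c}-x$, and setting $z=y+1$ produces precisely the asserted monochromatic set $\left\{xz\left(p^{i}+1\right)^{c}-x:i\in\{1,\ldots,n\}\right\}$.

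I do not anticipate any genuine obstacle here: the result is a direct specialization of the preceding theorem and mirrors the proof of the previous corollary line for line, with the single substitution of $p^{i}$ for the distinct primes $p_{i}$. The only points requiring a moment's care are the verification that $f\left(p^{i}\right)=0$ (which is what makes the base-$2$ factor trivialize) and the correct collapse of the $\mathfrak{G}_{1,1}$ expression via the $\varoast_{1,1}$ identity; both are routine and require no new ideas beyond those already used for the primes case.
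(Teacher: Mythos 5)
Your proposal is correct and follows essentially the same route as the paper's own proof: specialize the preceding theorem to $p,p^{2},\ldots,p^{n}$, use $f\left(p^{i}\right)=0$ to trivialize the power-of-two factor and reduce $\frac{p^{i}}{2^{f\left(p^{i}\right)}}$ to $p^{i}$, collapse $\mathfrak{G}_{1,1}\left(y,\left(p^{i}\right)^{\left(c\right)}\right)$ to $\left(y+1\right)\left(p^{i}+1\right)^{c}-1$, and set $z=y+1$. The only differences are that you make explicit the distinctness of the $p^{i}$ and the $\varoast_{1,1}$ identity behind the collapse, both of which the paper leaves implicit.
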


\begin{proof}
Consider the numbers $p,p^{2},\ldots,p^{n}$. Then $f\left(p^{i}\right)=0$
and $\frac{p^{i}}{2^{f\left(p^{i}\right)}}=p^{i}$ for all $i\in\left\{ 1,2,\ldots,n\right\} $.
Then for any $r$-partition of $\mathbb{N}$, from the above theorem
there exist $x,y\in$ $\mathbb{N}$ such that the following monochromatic
configuration: 
\[
\left\{ x2^{f\left(p^{i}\right)^{c}}\cdot\mathfrak{G}_{1,1}\left(y,\left(\frac{p^{i}}{2^{f\left(p^{i}\right)}}\right)^{\left(c\right)}\right):i\in\left\{ 1,2,\ldots,n\right\} \right\} .
\]
Let $z=y+1$. Then, 

$x2^{f\left(p^{i}\right)^{c}}\cdot\mathfrak{G}_{1,1}\left(y,\left(\frac{p^{i}}{2^{f\left(p^{i}\right)}}\right)^{\left(c\right)}\right)$

$=x\cdot\mathfrak{G}_{1,1}\left(y,p^{i\,\left(c\right)}\right)$

$=x\left\{ \left(y+1\right)\left(p^{i}+1\right)^{c}-1\right\} $

$=x\left\{ z\left(p^{i}1\right)^{c}-1\right\} $

$=xz\left(p^{i}+1\right)^{c}-x.$

So, we have the desired monochromatic configuration.
\end{proof}
The following two examples shows the type of monochromatic configurations
we get from the above corollary.
\begin{example}
Let us consider the numbers $2^{2}p,2p^{4}$ for some odd prime $p$.
Now $f\left(2^{2}p\right)=2$, $f\left(2p^{4}\right)=1$, and $\frac{2^{2}p}{2^{f\left(2^{2}p\right)}}=p$,
$\frac{2p^{4}}{2^{f\left(2p^{4}\right)}}=p^{4}$. So for any $r$-partition
of $\mathbb{N}$, there exist $x,y,c\in\mathbb{N}$ such that the
following two patterns 
\[
x2^{2^{c}}\cdot\mathfrak{G}_{1,1}\left(y,p^{(c)}\right)=2^{2^{c}}x\left\{ \left(y+1\right)\left(p+1\right)^{c}-1\right\} 
\]
 and
\[
2x\cdot\mathfrak{G}_{1,1}\left(y,\left(p^{4}\right)^{(c)}\right)=2x\left\{ \left(y+1\right)\left(p^{4}+1\right)^{c}-1\right\} ,
\]
are monochromatic.
\end{example}

\begin{example}
Let us consider the numbers $pq,2p,q^{2}$ for some odd primes $p$
and $q$. Now, $f\left(pq\right)=0$, $f\left(2p\right)=1$ and $f\left(q^{2}\right)=0$.
And $\frac{pq}{2^{f\left(pq\right)}}=pq$, $\frac{2p}{2^{f\left(2p\right)}}=p$
and $\frac{q^{2}}{2^{f\left(q^{2}\right)}}=q^{2}$. So, for any $r$-partition
of $\mathbb{N}$, there exist $x,y,c\in\mathbb{N}$ such that the
following patterns 
\[
x\cdot\mathfrak{G}_{1,1}\left(y,\left(pq\right)^{(c)}\right)=x\left\{ \left(y+1\right)\left(pq+1\right)^{c}-1\right\} ,
\]
\[
2x\cdot\mathfrak{G}_{1,1}\left(y,p^{(c)}\right)=2x\left\{ \left(y+1\right)\left(p+1\right)^{c}-1\right\} 
\]
 and 
\[
x\cdot\mathfrak{G}_{1,1}\left(y,\left(q^{2}\right)^{(c)}\right)=x\left\{ \left(y+1\right)\left(q^{2}+1\right)^{c}-1\right\} ,
\]
 are monochromatic.
\end{example}
\vspace{.2in}
\noindent \textbf{\large{Acknowledgement:}} The authors are thankful to Prof. Dibyendu De for his guidance and continuous inspiration. We also acknowledge his helpful comments on the previous draft of the paper.
\vspace{.3in}

\Addresses 

\end{document}